\let\pa=\partial
\let\f=\frac
\let\wt=\widetilde
\let\wh=\widehat
\let\D=\Delta
\def\lam{\lambda}
\def\ka{\kappa}
\def\ga{\gamma}
\def\v{{\rm v}}
\def\d{\delta}
\def\ff{\frak{f}}
\def\fg{\frak{g}}
\def\bfg{\bar{\frak{g}}}
\def\cB{{\mathcal B}}
\def\cC{{\mathcal C}}
\def\cF{{\mathcal F}}
\def\cS{{\mathcal S}}
\def\eqdef{\buildrel\hbox{\footnotesize def}\over =}
\def\Z{\mathop{\mathbb Z\kern 0pt}\nolimits}
\def\N{\mathop{\mathbb N\kern 0pt}\nolimits}
\def\Q{\mathop{\mathbb Q\kern 0pt}\nolimits}
\def\R{\mathop{\mathbb R\kern 0pt}\nolimits}
\def\Supp{\mathop{\rm Supp}\nolimits\ }
\def\dive{{\mathop{\rm div}\nolimits}\,}
\def\diveh{{\mathop{\rm div}_{\rm h}\nolimits}\,}
\def\curl{{\mathop{\rm curl}\nolimits}\,}
\def\curlh{{\mathop{\rm curl}_{\rm h}\nolimits}\,}
\def\nablah{\nabla_{\rm h}}
\def\uh{u^{\rm h}}
\def\vh{v^{\rm h}}
\def\h{{\rm h}}
\def\vv{{\rm v}}
\def\La{\Lambda}
\def\Deltah{\Delta_{\rm h}}
\def\baru{\bar u}
\def\baruh{\baru^\h}
\def\ah{a^\h}
\def\wtu{\wt u}
\def\na{\nabla}
\def\dhk{\Delta_k^{\rm h}}
\def\dhkp{\Delta_{k'}^{\rm h}}
\def\dvlp{\Delta_{\ell'}^{\rm v}}
\def\dvj{\Delta_j^{\rm v}}
\def\dvl{\Delta_{\ell}^{\rm v}}
\def\LVLH {L^\infty_{\rm v} (L^2_{\rm h})}
\def\vhl{\vh_\lam}
\def\wg{w_\mu}
\def\vhg{v^\h_\mu}
\def\infB{{\wt L^\infty_t(\cB^{0,\frac12})}}
\def\twoB{{\wt L^2_t(\cB^{0,\frac12})}}
\def\twofB{{\wt L^2_{t,f}(\cB^{0,\frac12})}}
\def\twogB{{\wt L^2_{t,\hbar}(\cB^{0,\frac12})}}
\def\twogB{{\wt L^2_{t,\hbar}(\cB^{0,\frac12})}}
\def\Bh{{\cB^{-\f12,\f12}_4}}
\def\infBT{{\wt L^\infty_T(\cB^{0,\frac12})}}
\def\twoBT{{\wt L^2_T(\cB^{0,\frac12})}}
\def\twofBT{{\wt L^2_{T,\ff}(\cB^{0,\frac12})}}
\def\la{\lambda}
\def\e{\varepsilon}
\def\eqdefa{\buildrel\hbox{\footnotesize def}\over =}
\newcommand{\Rmnum}[1]{\uppercase\expandafter{\romannumeral #1} }
\newcommand{\beq}{\begin{equation}}
\newcommand{\eeq}{\end{equation}}
\newcommand{\ben}{\begin{eqnarray}}
\newcommand{\een}{\end{eqnarray}}
\newcommand{\beno}{\begin{eqnarray*}}
\newcommand{\eeno}{\end{eqnarray*}}
 \numberwithin{equation}{section}
\newcommand{\andf}{\quad\hbox{and}\quad}
\newcommand{\with}{\quad\hbox{with}\quad}
\newtheorem{defi}{Definition}[section]
\newtheorem{thm}{Theorem}[section]
\newtheorem{lem}{Lemma}[section]
\newtheorem{rmk}{Remark}[section]
\newtheorem{col}{Corollary}[section]
\newtheorem{prop}{Proposition}[section]
\begin{document}

\title[Global well-posedness of $3$-D anisotropic Navier-Stokes system ]
{Global well-posedness of $3$-D anisotropic Navier-Stokes system with
small unidirectional derivative}

\author[Y. Liu]{Yanlin Liu}
\address[Y. Liu]{Academy of Mathematics $\&$ Systems Science
and Hua Loo-Keng Center for Mathematical Sciences,
Chinese Academy of Sciences, Beijing 100190, CHINA.}
 \email{liuyanlin@amss.ac.cn}

\author[M. Paicu]{Marius Paicu}
\address[M. Paicu]{Laboratoire de Mathmatique, Universit\'e
Paris Sud, B\^{a}timet 425, 91 405 Orsay, FRANCE.}
\email{marius.paicu@math.u-bordeaux.fr}

\author[P. Zhang]{Ping  Zhang }
\address[ P. Zhang]{  Academy of Mathematics $\&$ Systems Science
and and  Hua Loo-Keng Key Laboratory of
Mathematics,  Chinese Academy of
Sciences, Beijing 100190, CHINA, and School of Mathematical Sciences,
University of Chinese Academy of Sciences, Beijing 100049, China.} \email{zp@amss.ac.cn}

\date{\today}

\begin{abstract}  In \cite{LZ4}, the authors proved that as
long as the one-directional derivative of the initial velocity is sufficiently small in some scaling invariant spaces, then the classical Navier-Stokes system  has
a global unique solution. The goal of this paper is to extend this type of result to the 3-D anisotropic Navier-Stokes system $(ANS)$ with only
horizontal dissipation.  More precisely,
 given initial data $u_0=(u_0^\h,u_0^3)\in \cB^{0,\f12},$ $(ANS)$  has a unique global solution provided that $|D_\h|^{-1}\pa_3u_0$
 is sufficiently small in the scaling invariant space $\cB^{0,\f12}.$
 \end{abstract}

\maketitle

\noindent {\sl Keywords:} Anisotropic Navier-Stokes system, Littlewood-Paley theory, well-posedness

\vskip 0.2cm
\noindent {\sl AMS Subject Classification (2000):} 35Q30, 76D03

\setcounter{equation}{0}

\section{Introduction}

In this paper, we investigate the global well-posedness of the following
$3$-D anisotropic Navier-Stokes system:
\begin{equation*}
(ANS)\quad \left\{\begin{array}{l}
\displaystyle \pa_t u +u\cdot\nabla u-\Delta_\h u=-\nabla p, \qquad (t,x)\in\R^+\times\R^3, \\
\displaystyle \dive u = 0, \\
\displaystyle  u|_{t=0}=u_0,
\end{array}\right.
\end{equation*}
where $\D_\h\eqdef \pa_1^2+\pa_2^2,$ $u$ designates the velocity of the fluid and $p$
the scalar pressure function which guarantees the divergence free condition of the velocity field.

Systems of this type appear in geophysical fluid dynamics (see for instance \cite{CDGGbook, Pedlovsky}).
In fact, meteorologists often modelize turbulent diffusion by putting
a viscosity of the form: $-\mu_\h\D_\h-\mu_3\pa_3^2$,
where $\mu_\h$ and $\mu_3$ are empirical constants,
and $\mu_3$ is usually much smaller than $\mu_\h$.
We refer to the book of Pedlovsky \cite{Pedlovsky},
Chap. $4$ for a  complete discussion about this model.

Considering system $(ANS)$ has
 only horizontal dissipation, it is reasonable to use functional spaces
which  distinguish horizontal derivatives from the vertical one,
for instance, the anisotropic Sobolev space defined as follows:

\begin{defi}\label{defanisob}
{\sl For any $(s,s')$ in $\R^2$, the anisotropic Sobolev space
$H^{s,s'}(\R^3)$ denotes the space of homogeneous tempered distribution
$a$ such that
$$\|a\|^2_{H^{s,s'}} \eqdefa \int_{\R^3} |\xi_{\rm
h}|^{2s}|\xi_3|^{2s'} |\wh a (\xi)|^2d\xi <\infty\with \xi_{\rm
h}=(\xi_1,\xi_2).$$
}\end{defi}

Mathematically, Chemin et al. \cite{CDGG}
 first studied the system $(ANS).$ In particular,  Chemin et al. \cite{CDGG} and Iftimie \cite{Iftimie}
 proved that $(ANS)$ is locally well-posed
with initial data in $L^2\cap H^{0,\f12+\varepsilon}$ for some $\varepsilon>0$,
and is globally well-posed if in addition
\begin{equation}\label{smallCDGG}
\|u_0\|_{L^2}^\varepsilon
\|u_0\|_{H^{0,\f12+\varepsilon}}^{1-\varepsilon}\leq c
\end{equation}
for some sufficiently small constant $c$.

Notice that just as the classical Navier-Stokes system
\begin{equation*}
(NS)\quad \left\{\begin{array}{l}
\displaystyle \pa_t u +u\cdot\nabla u-\Delta u=-\nabla p, \qquad (t,x)\in\R^+\times\R^3, \\
\displaystyle \dive u = 0, \\
\displaystyle  u|_{t=0}=u_0,
\end{array}\right.
\end{equation*}
the system $(ANS)$ has the following scaling invariant property:
\begin{equation}\label{NSscaling}
u_\lambda(t,x)\eqdefa\lambda u(\lambda^2 t,\lambda x) \andf u_{0,\lambda}(x)\eqdefa \lambda u_0(\lambda x),
\end{equation}
which means  that if $u$ is a solution of $(ANS)$ with initial data $u_0$ on $[0,T]$, $u_\la$ determined by \eqref{NSscaling}
is also a solution of $(ANS)$ with initial data $u_{0,\lam}$ on $[0,T/\la^2]$.

It is easy  to observe that
the smallness condition \eqref{smallCDGG}  in \cite{CDGG} is scaling invariant under the scaling transformation \eqref{NSscaling},
nevertheless,  the norm of the  space $H^{0,\f12+\varepsilon}$ is not. To work $(ANS)$ with initial data in the critical spaces,
we  first recall the following anisotropic dyadic operators from \cite{BCD}:
\begin{equation}\begin{split}\label{defparaproduct}
&\Delta_k^{\rm h}a\eqdefa\cF^{-1}(\varphi(2^{-k}|\xi_{\rm h}|)\widehat{a}),
 \quad \Delta_\ell^{\rm v}a \eqdefa\cF^{-1}(\varphi(2^{-\ell}|\xi_3|)\widehat{a}),\\
&S^{\rm h}_ka\eqdefa\cF^{-1}(\chi(2^{-k}|\xi_{\rm h}|)\widehat{a}),
\quad\ S^{\rm v}_\ell a \eqdefa\cF^{-1}(\chi(2^{-\ell}|\xi_3|)\widehat{a}),
\end{split}\end{equation}
where  $\xi_{\rm h}=(\xi_1,\xi_2),$ $\cF a$ or $\widehat{a}$  denotes the Fourier transform of $a$,
while $\cF^{-1} a$ designates the inverse Fourier transform of $a$,
$\chi(\tau)$ and $\varphi(\tau)$ are smooth functions such that
\begin{align*}
&\Supp \varphi \subset \Bigl\{\tau \in \R\,: \, \frac34 \leq
|\tau| \leq \frac83 \Bigr\}\quad\mbox{and}\quad \forall
 \tau>0\,,\ \sum_{j\in\Z}\varphi(2^{-j}\tau)=1;\\
& \Supp \chi \subset \Bigl\{\tau \in \R\,: \, |\tau| \leq
\frac43 \Bigr\}\quad\mbox{and}\quad \forall
 \tau\in\R\,,\ \chi(\tau)+ \sum_{j\geq 0}\varphi(2^{-j}\tau)=1.
\end{align*}

\begin{defi}\label{anibesov}
{\sl We define $\cB^{0,\f12}(\R^3)$ to be the set of  homogenous tempered distribution $a$ so that
$$
\|a\|_{\cB^{0,\f12}}\eqdef \sum_{\ell\in\Z}2^{\f{\ell}2}
\|\dvl a\|_{L^2(\R^3)}<\infty.
$$
}\end{defi}

The above space was first introduced by Iftimie in \cite{Iftimie99} to study the global well-posedness of the classical 3-D
Navier-Stokes system with initial data in the anisotropic functional space.
The second author \cite{Pa02} proved the local well-posedness of $(ANS)$ with any
solenoidal vector field $u_0\in\cB^{0,\f12}$ and also the global well-posedness with small initial data in
$\cB^{0,\f12}.$  This result corresponds to  the
 Fujita-Kato's theorem (\cite{fujitakato}) for the classical Navier-Stokes  system. Moreover, the authors \cite{PZ1,Zhang10}
 proved the the global well-posedness  of  $(ANS)$  with initial data $u_0=(u_0^\h,u_0^3)$ satisfying
  \begin{equation}\label{smallPZ1a}
\|\uh_0\|_{\cB^{0,\f12}}\exp\bigl(C\|u^3_0\|_{\cB^{0,\f12}}^4\bigr)\leq c_0
\end{equation}
for some $c_0$ sufficiently small.

Although the norm of $B^{0,\f12}$ is scaling invariant under the the scaling transformation \eqref{NSscaling}, yet we observe that
the solenoidal vector field \beq
\label{exam} u_0^\e(x) = \sin \Bigl(\frac
{x_{1}}\e\Bigr)\left(0, -
\partial_{3}\varphi, \partial_{2}\varphi\right)
\eeq
is not small in the space $B^{0,\f12}$ no matter how small $\e$ is. In order to find a space so that the norm  of $u_0^\e(x)$ given by \eqref{exam} is small in this space for small $\e,$
Chemin and the third author \cite{CZ07} introduced the following Besov-Soblev type space with negative index:

\begin{defi}\label{animinus}
{\sl
We define the space $\cB^{-\f12,\f12}_4$ to be the set of  a homogenous tempered distribution $a$ so that
$$\|a\|_{\cB^{-\f12,\f12}_4}\eqdef\sum_{\ell\in\Z}
2^{\f\ell2}\Bigl(\bigl(\sum_{k=\ell-1}^\infty 2^{-k}
\|\D^\h_k \D^\vv_\ell a\|_{L^4_\h(L^2_\vv)}^2\bigr)^{\f12}
+\|S^\h_{\ell-1}\D^\vv_\ell a\|_{L^2}\Bigr)<\infty. $$
}
\end{defi}

Chemin and the third author \cite{CZ07} proved the global well-posedness of $(ANS)$
with initial data being small in the space $\cB^{-\f12,\f12}_4.$ In particular, this result ensures the global
well-posedness of $(ANS)$ with initial data $u_0^\e(x)$ given by \eqref{exam} as long as $\e$ is sufficiently small.  Furthermore
 the second and third authors \cite{PZ1} proved the  global well-posedness of $(ANS)$ provided that
 the initial data $u_0=(u_0^\h,u_0^3)$ satisfies
 \begin{equation}\label{smallPZ1}
\|\uh_0\|_{\cB^{-\f12,\f12}_4}\exp\bigl(C\|u^3_0\|_{\cB^{-\f12,\f12}_4}^4\bigr)\leq c_0
\end{equation}
for some $c_0$ sufficiently small. We remark that this result corresponds to  Cannone,  Meyer and  Planchon's result
in \cite{cannonemeyerplanchon} for the classical Nvaier-Stokes system, where the authors proved that
 if the initial data satisfies
 \[
 \|u_{0}\|_{ \dot B^{-1+\frac 3 p}_{p,\infty} }\leq c\nu
 \]
for  some~$p$ greater than~$3$ and some constant~$c$ small enough, then $(NS)$ is globally well-posed.
The end-point result in this direction is due to   Koch and  Tataru \cite{kochtataru} for initial data in the space of
~$\partial BMO.$

On the other hand, motivated by the study of the global well-posedness of the classical Navier-Stokes system
with slowly varying initial data \cite{CG10,CGZ, CZ15},
 the first and third authors
 proved the following theorem for $(NS)$ in \cite{LZ4}:

\begin{thm}\label{thmLZ}
{\sl Let $u_0=(u^h_0,u_0^3)\in H^{-\delta,0}\cap H^2(\R^3)$
for some $\delta\in]0,1[$
with $\pa_3u_0^\h\in H^{-\delta,0}\cap H^{-\f12,0}$.
There exists a universal small positive constant $\e_0$ such that if
\begin{equation}\label{smallLZ}
\|\pa_3u_0\|_{H^{-\f12,0}}^2\exp\bigl(CA_\d(\uh_0)+CB_\d(u_0)\bigr)
\leq\e_0,
\end{equation}
where
\beno
\begin{split}
B_\d(u_0)\eqdefa &\|u_0\|_{H^{-\d,0}}^{\f12}\|u_0\|_{H^{\d,0}}^{\f12}
\|\pa_3u_0\|_{H^{-\d,0}}^{\f12}
\|\pa_3u_0\|_{H^{\d,0}}^{\f12}
\exp\bigl(CA_\d(\uh_0)\bigr) \andf \\
A_\d(u_0^\h)\eqdefa& \biggl(\frac{\|\nabla _{\rm h}\uh_0\|_{\LVLH}^2
\|\uh_0\|_{L^\infty_{\rm v} (B^{-\d}_{2,\infty})_\h}
^{\frac 2 \delta}} {\|\uh_0\|_{\LVLH}^{\frac 2 \d}}
+\|\uh_0\|_{\LVLH}^{2}\Bigr)\cdot
\exp\Big(C_\d(1+\|\uh_0\|_{\LVLH}^4)\biggr)\end{split}
\eeno
then $(NS)$ has a unique global solution
$u\in C\bigl(\R^+;H ^{\frac12}\bigr)\cap
L^2\bigl(\R^+;H ^{\frac 32}\bigr)$.
}\end{thm}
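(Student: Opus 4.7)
The plan is to view this theorem as a \emph{smallness-propagation} result for the one-directional derivative $\omega\eqdef\pa_3 u$, coupled with a two-dimensional-type control on the horizontal component $\uh$. Local-in-time existence of a strong solution in $H^{\f12}\cap H^2$ is standard by Fujita--Kato together with $H^2$-persistence, so the whole task reduces to producing a global a priori bound under \eqref{smallLZ}. The heuristic is that, if $\pa_3 u$ stays small, then the flow is nearly two-dimensional in horizontal slices and inherits the good features of 2D Navier--Stokes; the hierarchy of norms appearing in $A_\d(\uh_0)$ and $B_\d(u_0)$ reflects precisely this 2D scaffolding.

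First I would estimate $\uh$ along the evolution. The equation $\pa_t \uh+u\cdot\na\uh-\D\uh=-\nablah p$ is tested successively in $\LVLH$ and in the negative-index space $L^\infty_{\rm v}(B^{-\d}_{2,\infty})_{\rm h}$, where the 2D Navier--Stokes mechanism (combined with Gagliardo--Nirenberg-type interpolation in the horizontal variables) provides a Gronwall bound that produces the factor $A_\d(\uh_0)$. Next, working on the coupled system for $u$ in $H^{\d,0}$ and $H^{-\d,0}$ and exploiting the divergence-free identity $\pa_3 u^3=-\nablah\cdot\uh$, one obtains analogous anisotropic Sobolev bounds whose Gronwall closure generates the factor $B_\d(u_0)$. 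These steps are mechanical once the correct bilinear estimates are set up; they rely on input regularity alone and require no smallness.

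The crucial step is to show that the quantity
\beno
\cE(t)\eqdef\|\omega(t)\|_{H^{-\f12,0}}^2+\int_0^t\|\na\omega(s)\|_{H^{-\f12,0}}^2\,ds
\eeno
remains small for all $t$. Applying $\pa_3$ to $(NS)$ gives
\beno
\pa_t\omega+u\cdot\na\omega+\omega\cdot\na u-\D\omega=-\na\pa_3 p,\qquad \dive\omega=0,
\eeno
so the $H^{-\f12,0}$ inner product with $\omega$ yields
\beno
\f12\f{d}{dt}\|\omega\|_{H^{-\f12,0}}^2+\|\na\omega\|_{H^{-\f12,0}}^2=-\bigl(u\cdot\na\omega\,|\,\omega\bigr)_{H^{-\f12,0}}-\bigl(\omega\cdot\na u\,|\,\omega\bigr)_{H^{-\f12,0}},
\eeno
the pressure dropping out thanks to $\dive\omega=0$. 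The difficult part is to estimate these two trilinear terms so that everything is absorbed either into the dissipation $\|\na\omega\|_{H^{-\f12,0}}^2$ or into a Gronwall term driven by the $\uh$- and $u$-quantities controlled above. I would carry out an anisotropic Littlewood--Paley decomposition in $\xi_{\rm h}$, apply the Bony paraproduct, and systematically exploit $\omega^3=-\nablah\cdot\uh$ to trade a vertical derivative for a horizontal one whenever the ``bad'' factor $u^3$ (which has no direct smallness) appears. The weight $|\xi_{\rm h}|^{-1}$ built into $H^{-\f12,0}$ is precisely tuned so that these anisotropic estimates are scaling-invariant under \eqref{NSscaling}.

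Combining the $\omega$-estimate with the bounds of the previous step produces a Gronwall inequality whose integrating factor is exactly $\exp\bigl(CA_\d(\uh_0)+CB_\d(u_0)\bigr)$; the smallness assumption \eqref{smallLZ} then closes the bootstrap and keeps $\cE(t)$ below a universal constant for all $t$. A standard continuation argument upgrades the local $H^{\f12}$-solution to a global one. The main obstacle, in my view, is the anisotropic trilinear estimate for $(\omega\cdot\na u\,|\,\omega)_{H^{-\f12,0}}$: here the large factor $u^3$ appears together with $\pa_3\omega=\pa_3^2u$, and invoking $\omega^3=-\nablah\cdot\uh$ removes only one of the two obstructions, so an additional anisotropic interpolation between $H^{-\d,0}$ and $H^{\d,0}$ (the very spaces feeding $B_\d(u_0)$) is required to close the estimate.
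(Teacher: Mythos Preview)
Your outline differs from the proof of this theorem as recalled in the paper (the result is from \cite{LZ4}; the present paper only sketches its strategy). Two structural differences matter.

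First, the actual argument does \emph{not} work with the full velocity $u$ and $\omega=\pa_3 u$ directly. It begins by splitting $u_0^\h=u^\h_{0,\mathop{\rm curl}\nolimits}+u^\h_{0,\mathop{\rm div}\nolimits}$, solves the genuinely two-dimensional system \eqref{S2eq4} for $\baruh$ with data $u^\h_{0,\mathop{\rm curl}\nolimits}$, then introduces a linear correction $\wt u$ via \eqref{eqtwtu}, and finally studies the remainder $v$ in the decomposition $u=(\baruh,0)+\wt u+v$ of \eqref{decomsol}. The controlled quantities are $\omega\eqdef\curlh v^\h$ and $\nabla v^3$ in $H^{-\f12,0}$ (see \eqref{defMN}), not $\pa_3 u$. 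The point of this detour is exactly to avoid the circularity in your step one: your ``2D-mechanism'' bounds for $\uh$ in $\LVLH$ require the transport term $u^3\pa_3\uh$ to be already under control, which is precisely what you are trying to establish. By contrast, $\baruh$ solves an honest 2D Navier--Stokes system (parametrized by $x_3$) and can be estimated \emph{independently} of the 3D evolution; the constants $A_\d(\uh_0)$ and $B_\d(u_0)$ then arise from estimating $\baruh$ and $\wt u$, not $u$.

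Second, and more seriously, your final ``standard continuation argument'' is not standard here. Smallness of $\|\pa_3 u\|_{H^{-\f12,0}}$ (or of $M(t)+\int_0^t N$ in the paper's notation) does not by itself yield a blow-up criterion in $H^{\f12}$. The paper is explicit that the proof in \cite{LZ4} closes only after invoking the one-component regularity criterion of \cite{CZ5} (Theorem~\ref{blowupBesovendpoint}): once \eqref{S6eq4} is secured, the decomposition \eqref{decomsol} and the a priori bounds on $\baruh,\wt u$ force $\int_0^{T^\ast}\|\pa_i u^j\|^{p_{i,j}}_{B^{-2+2/p_{i,j}}_{\infty,\infty}}dt<\infty$, whence $T^\ast=\infty$. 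Your proposal omits this ingredient entirely; without it, the bootstrap on $\cE(t)$ does not propagate the $H^{\f12}$ regularity and the argument does not close.
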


We remark that Theorem \ref{thmLZ} ensures the global well-posedness of $(NS)$  with initial data
\begin{equation}\label{slowinitialdata}
u_0^\varepsilon(x)=(\vh_0 +\varepsilon w^\h_0,w^3_0)(x_\h,\varepsilon x_3) \with \diveh\vh_0=0=\dive w_0
\end{equation}
for $\e\leq\e_0,$ which was first proved in \cite{CG10}. We mention that the proof of Theorem \ref{thmLZ}
requires a regularity criteria in \cite{CZ5}, which can only be proved for the classical Navier-Stokes system so far.

Motivated by \cite{LZ4} and \cite{PZ1, Zhang10}, here we are going to study the global well-posedness of $(ANS)$ with initial data $u_0$ satisfying $\pa_3u_0$ being sufficiently small in some critical spaces.

The main result of this paper states as follows:

\begin{thm}\label{thmmain}
{\sl Let $\La_\h^{-1}$ be a Fourier multiplier with symbol $|\xi_\h|^{-1},$ let $u_0\in \cB^{0,\f12}$ be a solenoidal vector field with $\La_\h^{-1}\pa_3 u_0\in {\cB^{0,\f12}}.$
Then there exist some sufficiently small positive constant $\e_0$ and some universal positive constants $L, M, N$
 so that for $\frak{A}_N\bigl(\|u^\h_0\|_{\cB^{0,\f12}}\bigr)$ given by \eqref{ANuh} if
\begin{equation}\begin{split}\label{smallnesscondition}
\|\Lambda^{-1}_\h\pa_3 u_0\|_{\cB^{0,\f12}}
\exp\Bigl(L\bigl(1+\|u_0^3\|_\Bh^4\bigr)\exp\bigl(M\frak{A}_N^4\bigl(\|u^\h_0\|_{\cB^{0,\f12}}\bigr)\bigr)\Bigr)
\leq \e_0,
\end{split}\end{equation}
 $(ANS)$ has a unique global solution $u=\frak{v}+e^{t\D_h} \begin{pmatrix}
0\\
u^3_{0,{\rm hh}}
\end{pmatrix}$ with
$\frak{v}\in C([0,\infty[\,;\cB^{0,\f12})$ and $\nablah \frak{v}\in L^2([0,\infty[\,;\cB^{0,\f12}),$
where $u_{0,{\rm hh}}^3\eqdefa \sum_{k\geq \ell-1}\D_k^\h\D_\ell^\v u_0^3.$
}\end{thm}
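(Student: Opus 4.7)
The overall strategy is to decompose $u=\frak{v}+U$ where $U(t)\eqdef e^{t\Delta_\h}(0,0,u^3_{0,{\rm hh}})^T$, and to establish a priori estimates on the remainder $\frak{v}$. The motivation is twofold. First, since $u^3_{0,{\rm hh}}$ is localized on horizontally-dominant frequencies $2^k\gtrsim 2^\ell$, the horizontal heat semigroup yields strong dissipative control on $U$ in terms of $\|u_0^3\|_{\cB^{0,\f12}}$ and $\|u_0^3\|_\Bh$. Second, the complementary piece $u_0^3-u^3_{0,{\rm hh}}=\sum_{k\leq\ell-2}\dhk\dvl u_0^3$ is supported on vertically-dominant frequencies $|\xi_3|\gtrsim|\xi_\h|$, where $\La_\h^{-1}\pa_3$ is a nonsingular Fourier multiplier of size $\gtrsim1$, so
\begin{equation*}
\|u_0^3-u^3_{0,{\rm hh}}\|_{\cB^{0,\f12}}\lesssim\|\La_\h^{-1}\pa_3 u_0\|_{\cB^{0,\f12}}\leq\e_0.
\end{equation*}
Thus $\frak{v}(0)=(u_0^\h,u_0^3-u^3_{0,{\rm hh}})$ has a possibly large horizontal component but a small vertical one, which is precisely the situation the decomposition is designed to exploit.

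Substituting $u=\frak{v}+U$ into $(ANS)$ and using $\pa_tU-\Delta_\h U=0$ produces
\begin{equation*}
\pa_t\frak{v}-\Delta_\h\frak{v}+\frak{v}\cdot\nabla\frak{v}+\frak{v}\cdot\nabla U+U\cdot\nabla\frak{v}+U\cdot\nabla U=-\nabla p,
\end{equation*}
together with the constraint $\dive(\frak{v}+U)=0$ (note that $\dive U\neq 0$ in general, but equals a horizontally-localized quantity controlled by $u_0^\h$ via $\pa_3u_0^3=-\diveh u_0^\h$). Following the two-tier scheme of \cite{PZ1,Zhang10}, I would then estimate $\frak{v}^\h$ and $\frak{v}^3$ separately in weighted versions of $\cB^{0,\f12}$. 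Since $\frak{v}^\h$ is not initially small, closing its energy inequality via an anisotropic Littlewood--Paley decomposition together with Gronwall weighting to absorb the self-transport term $\frak{v}^\h\cdot\nabla_\h\frak{v}^\h$ produces the innermost exponential $\exp\bigl(M\frak{A}_N^4(\|u_0^\h\|_{\cB^{0,\f12}})\bigr)$ of \eqref{smallnesscondition}. For $\frak{v}^3$, the initial data is already small; treating the mixed terms $U\cdot\nabla\frak{v}$, $U\cdot\nabla U$ and the pressure in the $\Bh$-style norm introduces the factor $\exp\bigl(L(1+\|u_0^3\|_\Bh^4)\bigr)$, and the remaining bilinear contributions close by virtue of the smallness of $\|\La_\h^{-1}\pa_3 u_0\|_{\cB^{0,\f12}}$.

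Coupling the two inequalities through a continuity/bootstrap argument, and invoking the local well-posedness of $(ANS)$ in $\cB^{0,\f12}$ established in \cite{Pa02} together with a standard stability estimate at this level of regularity, promotes the a priori bound into a unique global solution of the required form and regularity.

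The main technical obstacle is controlling the transport of $\frak{v}^\h$ by the vertical components $\frak{v}^3$ and $U^3$ without any vertical dissipation: both $\frak{v}^3\pa_3\frak{v}^\h$ and $U^3\pa_3\frak{v}^\h$ threaten to lose a full vertical derivative once projected onto the dyadic block $\dvl$. Recovering summable estimates requires integrating by parts in $\pa_3$ and exploiting the divergence identity $\pa_3\frak{v}^3=-\diveh\frak{v}^\h-\pa_3U^3$, combined with anisotropic Bony paraproduct decompositions that transfer the missing vertical derivative onto a horizontally well-localized factor amenable to $\Delta_\h$. A secondary subtlety is that the smallness hypothesis in \eqref{smallnesscondition} is placed only on $\La_\h^{-1}\pa_3 u_0$; tracking this precisely under the nonlinear flow, so that each exponential layer in \eqref{smallnesscondition} is exactly what is needed and no more, is what dictates both the nested structure of the smallness condition and the explicit form of the solution decomposition.
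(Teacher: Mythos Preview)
Your decomposition misses the paper's essential step. In your scheme $\frak{v}^\h(0)=u_0^\h$ is large, and the $\frak{v}^\h$-equation contains the self-transport $\frak{v}^\h\cdot\nabla_\h\frak{v}^\h$. This term is genuinely quadratic in the unknown; a Gronwall weight can absorb terms that are linear in $\frak{v}^\h$ with a known coefficient, but not a quadratic nonlinearity when neither factor is small. The quantity $\frak{A}_N\bigl(\|u_0^\h\|_{\cB^{0,\frac12}}\bigr)$ in \eqref{smallnesscondition} does not arise from any such weighting---it is the global bound on an auxiliary 2-D Navier--Stokes solution, and without that auxiliary solution there is no mechanism producing it.

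Concretely, the paper first splits $u_0^\h=u_{0,\mathrm{curl}}^\h+u_{0,\mathrm{div}}^\h$ via the 2-D Biot--Savart law and solves the 2-D Navier--Stokes system \eqref{S2eq4} (with $x_3$ as a parameter) from the curl part. Two-dimensional global well-posedness yields the bound $\frak{A}_N$ of Proposition~\ref{propbaruh} and absorbs the large horizontal self-interaction. Writing $u=(\bar u^\h,0)+v$ then leaves $v^\h(0)=u_{0,\mathrm{div}}^\h=-\nabla_\h\Delta_\h^{-1}\pa_3u_0^3$, which \emph{is} small by the hypothesis on $\Lambda_\h^{-1}\pa_3u_0$; the bootstrap of Proposition~\ref{aprioriv} closes because $\|v^\h\|$ stays small, while the cross terms $\bar u^\h\cdot\nabla_\h v^\h$ and $v\cdot\nabla\bar u^\h$ are handled by weighting against the known quantity $\|\bar u^\h\|_{\cB_4^{0,\frac12}}^4$. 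Only after this step is the free-evolution piece $v_F=e^{t\Delta_\h}u_{0,\mathrm{hh}}^3$ peeled off from $v^3$. Your proposal reverses the roles of large and small components relative to \cite{PZ1,Zhang10}: there the horizontal part is small and the vertical large, and the structure $u^3\pa_3u^3=-u^3\mathrm{div}_\h u^\h$ makes the $u^3$-equation effectively linear in $u^3$; no analogous cancellation exists in the $u^\h$-equation, so the 2-D subtraction is indispensable.
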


We remark that all the norms of $u^0$ in \eqref{smallnesscondition} is scaling invariant under the scaling transformation \eqref{NSscaling}.
With regular initial data, we may write explicitly the constant $\frak{A}_N\bigl(\|u^\h_0\|_{\cB^{0,\f12}}\bigr).$  For instance,

\begin{col}\label{S1col1}
{\sl Let $u_0\in L^2$ be a solenoidal vector field with $\pa_3u_0\in L^2$ and $\La_\h^{-1}\pa_3 u_0\in {\cB^{0,\f12}}.$
Then there exist some sufficiently small positive constant $\e_0$ and some universal positive constants $L, M$
 so that if \beq \label{smallnessconditiona} \|\Lambda^{-1}_\h\pa_3 u_0\|_{\cB^{0,\f12}}
\exp\Bigl(L\bigl(1+\|u_0^3\|_\Bh^4\bigr)
\exp\left(\exp\bigl(M\|\uh_0\|_{L^2}\|\pa_3\uh_0\|_{L^2}\bigr)\right)\Bigr)
\leq \e_0,\eeq $(ANS)$ has a unique global solution $u$ as in Theorem \ref{thmmain}.
}
\end{col}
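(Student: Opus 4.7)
The strategy is to deduce Corollary \ref{S1col1} directly from Theorem \ref{thmmain} by replacing the abstract control on $\|u_0^\h\|_{\cB^{0,\f12}}$ (hidden inside $\frak{A}_N$) by an explicit bound in terms of the $L^2$ data. The quantities $\|\Lambda^{-1}_\h \pa_3 u_0\|_{\cB^{0,\f12}}$ and $\|u_0^3\|_{\Bh}$ appearing in the smallness assumption are already prescribed in both statements; only the horizontal factor $\frak{A}_N^4\bigl(\|u_0^\h\|_{\cB^{0,\f12}}\bigr)$ in \eqref{smallnesscondition} needs to be re-expressed.

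The first step is to establish the interpolation inequality
\begin{equation*}
\|a\|_{\cB^{0,\f12}} \lesssim \|a\|_{L^2}^{\f12}\|\pa_3 a\|_{L^2}^{\f12}
\end{equation*}
for any tempered distribution $a$ with $a,\pa_3 a\in L^2$. This follows at once from the vertical Bernstein inequality
\begin{equation*}
\|\dvl a\|_{L^2} \leq C\min\bigl(\|a\|_{L^2},\, 2^{-\ell}\|\pa_3 a\|_{L^2}\bigr),
\end{equation*}
by splitting the sum $\sum_{\ell\in\Z} 2^{\ell/2}\|\dvl a\|_{L^2}$ at the threshold $2^{\ell_0}\sim \|\pa_3 a\|_{L^2}/\|a\|_{L^2}$ and summing two convergent geometric series. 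Applied to $u_0^\h$, this yields in particular $\|u_0^\h\|_{\cB^{0,\f12}}^{2}\lesssim \|u_0^\h\|_{L^2}\|\pa_3 u_0^\h\|_{L^2}$.

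With this interpolation in hand, the explicit form of $\frak{A}_N$ given by \eqref{ANuh}, which is exponential in a power of its scalar argument, yields a bound of the shape
\begin{equation*}
\frak{A}_N^4\bigl(\|u_0^\h\|_{\cB^{0,\f12}}\bigr)\leq \exp\bigl(M\|u_0^\h\|_{L^2}\|\pa_3 u_0^\h\|_{L^2}\bigr)
\end{equation*}
for a suitable universal constant $M$. Inserting this estimate into the smallness condition \eqref{smallnesscondition} of Theorem \ref{thmmain} produces exactly \eqref{smallnessconditiona}, so the hypotheses of the main theorem are verified, and existence and uniqueness of the global solution $u=\frak{v}+e^{t\D_\h}(0,u^3_{0,\rm hh})^T$ follow at once. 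The only step involving any genuine work is the interpolation inequality above, which is elementary; the remainder is a monotone substitution, so no serious obstacle is expected.
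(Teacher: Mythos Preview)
Your strategy of applying Theorem \ref{thmmain} as a black box and simply bounding $\frak{A}_N^4\bigl(\|u_0^\h\|_{\cB^{0,\f12}}\bigr)$ by $\exp\bigl(M\|u_0^\h\|_{L^2}\|\pa_3 u_0^\h\|_{L^2}\bigr)$ does not work as stated. The quantity $\frak{A}_N$ in \eqref{ANuh} is \emph{not} a function of the scalar $\|u_0^\h\|_{\cB^{0,\f12}}$ alone: it contains the term
\[
\bigl\|\baruh_{0,N}\bigr\|_{\cB^{0,\f12}}\exp\!\left(N^{2}\exp\bigl(C\|\baruh_0\|_{\cB^{0,\f12}}^{2}\bigr)\right),
\]
and $N$ must be chosen large enough that $\|\baruh_{0,N}\|_{\cB^{0,\f12}}$ is below a fixed threshold. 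Under your $L^2$ hypotheses one only has $\|\baruh_{0,N}\|_{\cB^{0,\f12}}\lesssim N^{-1/2}\bigl(\|u_0^\h\|_{L^2}+\|\pa_3 u_0^\h\|_{L^2}\bigr)$, which forces $N$ to be at least of order $\bigl(\|u_0^\h\|_{L^2}+\|\pa_3 u_0^\h\|_{L^2}\bigr)^{2}$. Feeding this back into the factor $\exp\bigl(N^{2}\exp(\cdots)\bigr)$ already makes $\frak{A}_N$ a \emph{double} exponential in the $L^2$ data, so that $\exp(M\frak{A}_N^{4})$ becomes a triple exponential---one level too many to recover the precise smallness condition \eqref{smallnessconditiona}.

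The paper avoids this loss by \emph{not} invoking Theorem \ref{thmmain} directly. Under the extra regularity $u_0^\h,\pa_3 u_0^\h\in L^2$, Lemma \ref{lem4.1} applies to the full two--dimensional flow $\baruh$ (no splitting $\baruh=\baruh_1+\baruh_2$ is needed), and together with the interpolation \eqref{interLpcB} this yields the single--exponential bound \eqref{rmkestimatebaruh} in place of \eqref{baruhestimate1}. One then reruns the continuation argument \eqref{S2eq20}--\eqref{S2eq23} with \eqref{rmkestimatebaruh} and \eqref{rmkestimatebaruh2} substituted for the $\frak{A}_N$ estimates, which produces exactly the double exponential appearing in \eqref{smallnessconditiona}. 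The interpolation inequality you prove is indeed the key elementary ingredient, but it has to be applied to the \emph{evolved} field $\baruh(t)$ via Lemma \ref{lem4.1}, not merely inserted into the formula for $\frak{A}_N$.
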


\begin{rmk} We have several remarks in order as follows about Theorem \ref{thmmain}:

\begin{itemize}
\item[(a)] It follows from \cite{CZ07} that  $$ \|u_0^3\|_\Bh\lesssim \|u_0^3\|_{\cB^{0,\f12}},$$
so that the smallness condition \eqref{smallnesscondition} and \eqref{smallnessconditiona} can also be formulated as
\begin{equation}\begin{split}\label{smallnessconditionam}
\|\Lambda^{-1}_\h\pa_3 u_0\|_{\cB^{0,\f12}}
\exp\Bigl(L\bigl(1+\|u_0^3\|_{\cB^{0,\f12}}^4\bigr)\exp\bigl(M\frak{A}_N^4\bigl(\|u^\h_0\|_{\cB^{0,\f12}}\bigr)\bigr)\Bigr)
\leq \e_0,
\end{split}\end{equation}
and
\beq \label{smallnessconditionaq} \|\Lambda^{-1}_\h\pa_3 u_0\|_{\cB^{0,\f12}}
\exp\Bigl(L\bigl(1+\|u_0^3\|_{\cB^{0,\f12}}^4\bigr)
\exp\left(\exp\bigl(M\|\uh_0\|_{L^2}\|\pa_3\uh_0\|_{L^2}\bigr)\right)\Bigr)
\leq \e_0.\eeq

\item[(b)] Due to $\dive u_0=0,$ we find
$$\|\Lambda^{-1}_\h\pa_3 u_0\|_{\cB^{0,\f12}}=
\|(\Lambda^{-1}_\h\pa_3\uh_0,-\Lambda^{-1}_\h\diveh \uh_0)\|_{\cB^{0,\f12}}.$$
Therefore the smallness condition \eqref{smallnesscondition} is of a similar type as \eqref{smallPZ1a}.
Yet Roughly speaking, \eqref{smallnesscondition}  requires only $\pa_3 \uh_0$
and $\diveh \uh_0$ to be small in some scaling invariant space, but without any restriction on $\curlh\uh_0$.
Thus the smallness condition  \eqref{smallnesscondition} is weaker than  \eqref{smallPZ1a}.

\item[(c)] Let $w_0$ be a smooth solenoidal vector field, we observe  that the following data
$$u_0^\varepsilon(x)=\bigl(\varepsilon(-\ln\varepsilon)^\delta w^\h_0,
(-\ln\varepsilon)^\delta w^3_0\bigr)(x_\h,\varepsilon x_3) \with  \delta\in ]0,1/4[$$
satisfies \eqref{smallPZ1a} for $\e$ sufficiently small.

While since our smallness condition \eqref{smallnessconditionaq} does not have any restriction on $\curl u_0^\h,$ for any
smooth vector field $v^\h_0$ satisfying $\diveh v^h_0=0,$ we find
\beq \label{S1eq11} u_0^\varepsilon(x)=\bigl(\vh_0+\varepsilon(-\ln\varepsilon)^\delta w^\h_0,
(-\ln\varepsilon)^\delta w^3_0\bigr)(x_\h,\varepsilon x_3)  \with  \delta\in ]0,1/4[ \eeq
satisfy \eqref{smallnessconditionaq} for any $\e$ sufficiently small. Therefore Theorem \ref{thmmain} ensures the global
well-posedness of $(ANS)$ with initial data given by \eqref{S1eq11}. Compared with \eqref{slowinitialdata}, which corresponds to
$\delta=1$ in \eqref{S1eq11}, this type of result is new even for the classical Navier-Stokes system.

\item[(d)]  Given $\phi\in \cS(\R^3)$, we deduce from  Proposition 1.1 in \cite{CZ07} that
$$
\|e^{ix_1/\varepsilon}\phi(x)\|_{\Bh}
\leq C\varepsilon^{\f12}.$$
As a result, we find that for any $\d\in ]0,1/4[$, the following  class of initial data
\beq \label{S1eq12} u_0^\varepsilon(x)=(v^\h, 0)(x_\h,\varepsilon x_3)
+(-\ln\varepsilon)^\delta\sin(x_1/\varepsilon)
\bigl(0,-\varepsilon^{\f12}\pa_3\phi(x_\h,\varepsilon x_3),
\varepsilon^{-\f12}\pa_2\phi(x_\h,\varepsilon x_3)\bigr) \eeq
satisfies the smallness condition \eqref{smallnessconditionam} for small enough
$\e$,
and hence the data given by \eqref{S1eq12} can also generate unique global solution of $(ANS)$.

\item[(e)] Since  all the results that work for the anisotropic Navier-Stokes system $(ANS)$ should automatically do for the
classical Navier-Stokes system $(NS),$
Theorem \ref{thmmain} holds   also for $(NS)$.
\end{itemize}\end{rmk}

Let us end this section with some notations that will be used
throughout this paper.

\noindent{\bf Notations:}   Let $A, B$ be two operators, we denote
$[A;B]=AB-BA,$ the commutator between $A$ and $B,$ for $a\lesssim b$,
we means that there is a uniform constant $C,$ which may be different in each occurrence, such that $a\leq Cb$. We shall denote by~$(a|b)_{L^2}$
the $L^2(\R^3)$ inner product of $a$ and $b.$
 $\left(d_j\right)_{j\in\Z}$ designates a  generic elements on the unit
sphere of $\ell^1(\Z)$, i.e. $\sum_{j\in\Z}d_j=1$.
 Finally, we
denote $L^r_T(L^p_\h(L^q_\v))$ the space $L^r([0,T];
L^p(\R_{x_1}\times\R_{x_2}; L^q(\R_{x_3}))),$ and $\na_\h\eqdefa (\pa_{x_1},\pa_{x_2}),$ $\dive_\h=
\pa_{x_1}+\pa_{x_2}$.

\medskip

\setcounter{equation}{0}
\section{Littlewood-Paley Theory}\label{secLP}

In this section, we shall collect some basic facts on anisotropic
Littlewood-Paley theory.
We first recall the following anisotropic Bernstein inequalities from
\cite{CZ07, Pa02}:
\begin{lem}\label{lemBern}
{\sl Let ${\bf B}_{\h}$ (resp.~${\bf B}_{\rm v}$) a ball
of~$\R^2_{\h}$ (resp.~$\R_{\rm v}$), and~$\cC_{\h}$ (resp.~$\cC_{\rm v}$) a
ring of~$\R^2_{\h}$ (resp.~$\R_{\rm
v}$); let~$1\leq p_2\leq p_1\leq
\infty$ and ~$1\leq q_2\leq q_1\leq \infty.$ Then there holds
\beno
\begin{split}
\mbox{if}\ \ \Supp \wh a\subset 2^k{\bf B}_{\h}&\Rightarrow
\|\partial_{x_{\rm h}}^\alpha a\|_{L^{p_1}_{\rm h}(L^{q_1}_{\rm v})}
\lesssim 2^{k\left(|\alpha|+\f2{p_2}-\f2{p_1}\right)}
\|a\|_{L^{p_2}_{\rm h}(L^{q_1}_{\rm v})};\\
\mbox{if}\ \ \Supp\wh a\subset 2^\ell{\bf B}_{\rm v}&\Rightarrow
\|\partial_{x_3}^\beta a\|_{L^{p_1}_{\rm h}(L^{q_1}_{\rm v})}
\lesssim 2^{\ell\left(\beta+\f1{q_2}-\f1{q_1}\right)} \|
a\|_{L^{p_1}_{\rm h}(L^{q_2}_{\rm v})};\\
\mbox{if}\ \ \Supp\wh a\subset 2^k\cC_{\h}&\Rightarrow
\|a\|_{L^{p_1}_{\rm h}(L^{q_1}_{\rm v})} \lesssim
2^{-kN}\sup_{|\alpha|=N}
\|\partial_{x_{\rm h}}^\alpha a\|_{L^{p_1}_{\rm
h}(L^{q_1}_{\rm v})};\\
\mbox{if}\ \ \Supp\wh a\subset2^\ell\cC_{\rm v}&\Rightarrow
\|a\|_{L^{p_1}_{\rm h}(L^{q_1}_{\rm v})} \lesssim 2^{-\ell N}
\|\partial_{x_3}^N a\|_{L^{p_1}_{\rm h}(L^{q_1}_{\rm v})}.\end{split}\eeno
}
\end{lem}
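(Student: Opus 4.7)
The plan is to establish all four inequalities via the same standard convolution representation: the frequency localization of $a$ allows one to write $a$ as a partial convolution with a fixed smooth profile rescaled dyadically, after which the estimates reduce to tracking $L^r$ norms under dyadic dilation combined with Young's inequality in one set of variables together with Minkowski's inequality to accommodate the mixed norm $L^{p_1}_\h(L^{q_1}_\v)$.

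For the two direct Bernstein inequalities I would fix $\wt\varphi\in\cS(\R^2)$ and $\wt\chi\in\cS(\R)$ whose Fourier transforms equal $1$ on a neighborhood of ${\bf B}_\h$ and ${\bf B}_\v$ respectively. If $\Supp\wh a\subset 2^k{\bf B}_\h$, then $a=\wt\varphi_k*_\h a$ with $\wt\varphi_k(x_\h)\eqdef 2^{2k}\wt\varphi(2^k x_\h)$, so $\pa_{x_\h}^\alpha a=(\pa_{x_\h}^\alpha\wt\varphi_k)*_\h a$. To handle the mixed norm I would apply Minkowski in $x_3$ to move $L^{q_1}_\v$ inside the horizontal convolution, and then Young's inequality in $x_\h$ with exponents $1+1/p_1=1/p_2+1/r$, giving
\begin{equation*}
\|\pa_{x_\h}^\alpha a\|_{L^{p_1}_\h(L^{q_1}_\v)}\leq \|\pa_{x_\h}^\alpha\wt\varphi_k\|_{L^r_\h}\,\|a\|_{L^{p_2}_\h(L^{q_1}_\v)}.
\end{equation*}
The scaling identity $\|\pa_{x_\h}^\alpha\wt\varphi_k\|_{L^r_\h}=2^{k(|\alpha|+2-2/r)}\|\pa_{x_\h}^\alpha\wt\varphi\|_{L^r_\h}$ together with $2-2/r=2/p_2-2/p_1$ produces the stated exponent. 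The vertical direct estimate is entirely analogous with $a=a*_\v\wt\chi_\ell$ and $\wt\chi_\ell(x_3)\eqdef 2^\ell\wt\chi(2^\ell x_3)$; here the convolution is one-dimensional and the outer $L^{p_1}_\h$ norm passes through without needing Minkowski, yielding the exponent $\beta+1/q_2-1/q_1$.

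For the two reverse inequalities the additional ingredient is a monomial decomposition of the localizing cutoff. Choosing $\psi\in\cS(\R^2)$ supported in a slightly larger ring than $\cC_\h$ and equal to $1$ on $\cC_\h$, the fact that $\psi$ vanishes near the origin makes $\eta_j\psi(\eta)/|\eta|^2$ smooth and compactly supported, so iterating $\psi(\eta)=\sum_{j=1}^{2}\eta_j\cdot\bigl(\eta_j\psi(\eta)/|\eta|^2\bigr)$ a total of $N$ times produces
\begin{equation*}
\psi(\eta)=\sum_{|\alpha|=N}\eta^\alpha\wt\psi_\alpha(\eta),\qquad \wt\psi_\alpha\in\cS(\R^2),
\end{equation*}
with every $\wt\psi_\alpha$ supported in the same ring. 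Substituting $\eta=2^{-k}\xi_\h$ into $\wh a=\psi(2^{-k}\xi_\h)\wh a$ and using $\xi_\h^\alpha\wh a=(-i)^N\widehat{\pa_{x_\h}^\alpha a}$ gives
\begin{equation*}
a=\sum_{|\alpha|=N}2^{-kN}(-i)^N\,\wt\psi_{\alpha,k}*_\h \pa_{x_\h}^\alpha a,
\end{equation*}
where $\wt\psi_{\alpha,k}(x_\h)\eqdef 2^{2k}(\cF^{-1}\wt\psi_\alpha)(2^k x_\h)$ has $L^1_\h$ norm independent of $k$; Minkowski plus Young as in the direct case then finishes the horizontal reverse inequality. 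The vertical version is the one-dimensional analogue: for $\wt\chi\in\cS(\R)$ equal to $1$ on $\cC_\v$ and supported in a slightly larger ring bounded away from $0$, one simply factors $\wt\chi(\tau)=\tau^N\bigl(\tau^{-N}\wt\chi(\tau)\bigr)$ with $\tau^{-N}\wt\chi(\tau)$ smooth and compactly supported, and argues identically.

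None of these steps is deep; the main things to be careful about are keeping the order of norms in $L^{p_1}_\h(L^{q_1}_\v)$ consistent so that Minkowski's inequality is applied in the right direction, and verifying the dyadic scaling of each rescaled cutoff. The only step with any genuine content is the monomial decomposition of $\psi$, which works precisely because $\psi$ is supported away from the origin of its ring; this is where the restriction to ring (as opposed to ball) supports is crucial.
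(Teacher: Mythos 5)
Your proof is correct and complete. The paper itself recalls Lemma~\ref{lemBern} from \cite{CZ07, Pa02} without reproducing the argument, and the standard convolution representation together with Minkowski and Young (plus the monomial decomposition of the ring cutoff for the reverse estimates) is exactly the argument those references use, so your approach matches the intended one.
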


\begin{defi}\label{B(T)}
{\sl  For any $p\in[1,\infty],$, let us define the
Chemin-Lerner type norm
$$\|a\|_{\wt L^p_T(\cB^{0,\f12})}\eqdef\sum_{\ell\in\Z}2^{\f{\ell}2}
\|\dvl a\|_{L^p_T(L^2(\R^3))}.$$
In particular, we denote
$$\|a\|_{\cB^{0,\f12}(T)}\eqdef\|a\|_{\wt L^\infty_T(\cB^{0,\f12})}
+\|\nablah a\|_{\wt L^2_T(\cB^{0,\f12})}.$$}
\end{defi}

We remark that the inhomogeneous  version of the  anisotropic Sobolev space $H^{0,1}$ can be continuously imbedded into $\cB^{0,\f12}.$ Indeed
for any integer $N$, we deduce from  Lemma \ref{lemBern} that
\beno
\begin{split}
\|a\|_{\cB^{0,\f12}}=&\sum_{\ell\leq N}2^{\f{\ell}2}\|\dvl a\|_{L^2}
+\sum_{\ell>N}2^{\f{\ell}2}\|\dvl a\|_{L^2}\\
\leq &\sum_{\ell\leq N} 2^{\f{\ell}2}\|\dvl a\|_{L^2}
+\sum_{\ell>N}2^{-\f{\ell}2}\|\pa_3\dvl a\|_{L^2}\\
\lesssim & 2^{\f N2}\|a\|_{L^2}+2^{-\f N2}\|\pa_3 a\|_{L^2}.
\end{split}
\eeno
Taking the integer $N$ so that
$2^{N}\sim \|\pa_3 a\|_{L^2}\|a\|_{L^2}^{-1}$ in the above inequality
leads to
\begin{equation}\label{interpolationcB}
\|a\|_{\cB^{0,\f12}}\lesssim\|a\|_{L^2}^{\f12}
\|\pa_3 a\|_{L^2}^{\f12}.
\end{equation}
Along the same line, we have
\begin{equation}\label{interLpcB}
\|a\|_{\wt L^p_T(\cB^{0,\f12})}\lesssim\|a\|_{L^p_T(L^2)}^{\f12}
\|\pa_3 a\|_{L^p_T(L^2)}^{\f12}\quad\forall\ p \in[1,\infty].
\end{equation}

To overcome the difficulty that one can not use Gronwall's inequality in the Chemin-Lerner type norms, we recall the following
time-weighted Chemin-Lerner norm from \cite{PZ1}:
\begin{defi}\label{defpz}
{\sl Let $f(t)\in L^1_{\rm{loc}}(\R_+)$, $f(t)\geq 0$. We define
$$\|a\|_{\widetilde L^2_{T,f}(\cB^{0,\f12})}\eqdefa \sum_{\ell\in\Z} 2^{\f\ell2}
\Bigl(\int_0^Tf(t)\|\dvl a(t)\|_{L^2}^2\,dt\Bigr)^{\frac 12}.$$}
\end{defi}

In order to take into account functions with oscillations in the horizontal variables,
we recall the following  anisotropic Besov type space with negative indices
 from \cite{CZ07}:
\begin{defi}\label{defBh}
{\sl
For any $ p\in[1,\infty],$ we define $$\|a\|_{\wt L^p(\cB^{-\f12,\f12}_4)}\eqdef\sum_{\ell\in\Z}
2^{\f\ell2}\biggl(\Bigl(\sum_{k=\ell-1}^\infty 2^{-k}
\|\D^\h_k \D^\vv_\ell a\|_{L^p_T(L^4_\h(L^2_\vv))}^2\Bigr)^{\f12}
+\|S^\h_{\ell-1}\D^\vv_\ell a\|_{L^p_T(L^2)}\biggr).$$
In particular, we denote
$$\|a\|_{\cB^{-\f12,\f12}_4(T)}\eqdef\|a\|_{\wt L^\infty_T(\Bh)}
+\|\nablah a\|_{\wt L^2_T(\Bh)}.$$
}\end{defi}


In the sequel, for $a\in \cB^{-\f12,\f12}_4,$ we shall frequently use the following decomposition:
\beq \label{decom}
a=a_{\rm lh}+a_{\rm hh} \with a_{\rm lh}\eqdefa \sum_{\ell\in\Z}S_{\ell-1}^\h\D_\ell^\v a \andf a_{\rm hh}\eqdefa \sum_{k\geq \ell-1}\D_k^\h\D_\ell^\v a.
\eeq

\begin{lem}[Lemma $2.5$ in \cite{CZ07}]\label{lemCZ07}
{\sl For any $a\in\Bh,$ there holds
$$\|e^{t\D_\h}a_{\h\h}\|_{\Bh(\infty)}\lesssim\|a\|_\Bh.$$
}\end{lem}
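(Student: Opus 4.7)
The plan is to decouple the claim into two separately provable estimates: the static bound $\|a_{\rm hh}\|_\Bh \lesssim \|a\|_\Bh$, and a general heat-flow bound $\|e^{t\D_\h}b\|_{\Bh(\infty)} \lesssim \|b\|_\Bh$ valid for every $b \in \Bh$. Chaining these with $b = a_{\rm hh}$ yields the lemma. Both estimates will be organized around the two components of the $\Bh$ norm: the high-horizontal-frequency $L^4_\h(L^2_\vv)$ sum (over $k \geq \ell-1$) and the low-horizontal-frequency $L^2$ sum (over the $\Sh_{\ell-1}\dvl$ blocks).

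For the heat-flow bound, the $\wt L^\infty_\infty(\Bh)$ half is immediate from the pointwise-in-time contractivity of $e^{t\D_\h}$ on each $L^p$ norm that appears. For $\nablah e^{t\D_\h}b$ measured in $\wt L^2_\infty(\Bh)$, I would apply on each $\dhk$-localized block the standard heat-decay estimate
\[
\|\nablah e^{t\D_\h}\dhk f\|_{L^2_t L^4_\h(L^2_\vv)} \lesssim 2^k\Bigl(\int_0^\infty e^{-c\,2^{2k}t}\,dt\Bigr)^{\frac12}\|\dhk f\|_{L^4_\h(L^2_\vv)}\lesssim \|\dhk f\|_{L^4_\h(L^2_\vv)},
\]
together with the classical energy identity $\int_0^\infty\|\nablah e^{t\D_\h}g\|_{L^2}^2\,dt = \frac12\|g\|_{L^2}^2$ applied to $g = \Sh_{\ell-1}\dvl b$. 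Summing in $\ell$ with the weight $2^{\ell/2}$ delivers the bound by $\|b\|_\Bh$.

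For the static bound, the $L^4_\h(L^2_\vv)$ part is straightforward because $\dhk\dvl a_{\rm hh}$ coincides with $\dhk\dvl a$ for $k \geq \ell-1$ up to $O(1)$ boundary corrections from the smooth Littlewood--Paley cutoffs, so the weighted double sum reduces to the corresponding piece of $\|a\|_\Bh$. For the $L^2$ part, I would use $a_{\rm hh} = a - a_{\rm lh}$ and expand
\[
\Sh_{\ell-1}\dvl a_{\rm lh} = \sum_{|\ell'-\ell|\leq 1}\Sh_{\ell-1}\Sh_{\ell'-1}\dvl\dvlp a,
\]
with the summation range forced by the quasi-orthogonality $\dvl\dvlp = 0$ for $|\ell - \ell'|\geq 2$. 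A pointwise comparison of cutoff symbols gives $\|\Sh_{\ell-1}\Sh_{\ell'-1}h\|_{L^2}\lesssim \|\Sh_{\ell'-1}h\|_{L^2}$ for the three relevant values of $\ell'$, so each term is bounded by $\|\Sh_{\ell'-1}\dvlp a\|_{L^2}$. Summing against the weights $2^{\ell/2}$ and absorbing the finite shift in $\ell$ yields control by the $L^2$ piece of $\|a\|_\Bh$.

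The main obstacle is precisely this $L^2$ estimate in the static step: any naive attempt to bound $\|\Sh_{\ell-1}\dvl a_{\rm hh}\|_{L^2}$ directly from the $L^4_\h(L^2_\vv)$ part of $\|a\|_\Bh$ fails, since Bernstein only provides $\|\cdot\|_{L^4_\h}\lesssim 2^{k/2}\|\cdot\|_{L^2_\h}$ in one direction, leaving the ``diagonal'' boundary contribution $\|\dhk\dvl a\|_{L^2}$ at $k \sim \ell$ uncontrolled by the $L^4$ summand at any scale. Routing through $a_{\rm hh} = a - a_{\rm lh}$ is the device that rewrites this boundary obstruction as a finite sum of \emph{honest} low-horizontal-frequency pieces of $a$ at neighboring vertical scales, each directly captured by the $L^2$ component of $\|a\|_\Bh$.
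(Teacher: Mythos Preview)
The paper does not supply its own proof of this lemma: it is quoted verbatim as Lemma~2.5 of \cite{CZ07} and used as a black box. So there is no argument in the present paper to compare against.

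Your two-step strategy (static bound $\|a_{\rm hh}\|_\Bh \lesssim \|a\|_\Bh$ followed by a general heat-flow bound $\|e^{t\D_\h}b\|_{\Bh(\infty)}\lesssim\|b\|_\Bh$) is sound, and the heat-flow step is handled correctly: block-wise decay $\|\nablah e^{t\D_\h}\dhk f\|_{L^p_\h}\lesssim 2^k e^{-c2^{2k}t}\|\dhk f\|_{L^p_\h}$ gives the $L^4_\h(L^2_\vv)$ sum, and the energy identity handles the $\Sh_{\ell-1}\dvl$ piece. One point deserves to be made explicit. In the static bound you dismiss the high-horizontal-frequency part as ``$\dhk\dvl a_{\rm hh}$ coincides with $\dhk\dvl a$ up to $O(1)$ boundary corrections.'' Those boundary corrections are precisely $\dhk\dvl a_{\rm lh}$ at $k\sim\ell$, and they live in $L^4_\h(L^2_\vv)$ while the $\Bh$ norm only measures $a_{\rm lh}$ in $L^2$. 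You should spell out that Bernstein's inequality (Lemma~\ref{lemBern}) gives $\|\dhk\Sh_{\ell'-1}\dvl\dvlp a\|_{L^4_\h(L^2_\vv)}\lesssim 2^{k/2}\|\Sh_{\ell'-1}\dvlp a\|_{L^2}$, so that the weighted contribution $2^{-k}\|\cdot\|_{L^4_\h(L^2_\vv)}^2$ at $k\sim\ell$ is bounded by $\|\Sh_{\ell'-1}\dvlp a\|_{L^2}^2$, which after the $\ell$-sum is controlled by the $L^2$ component of $\|a\|_\Bh$. This is the same mechanism you already use for the low-horizontal $L^2$ piece, just transported to the diagonal blocks of the $L^4$ sum; once stated, the argument is complete.
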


\begin{defi}\label{Def2.4}
{\sl Let us define
$$\|a\|_{\cB_4^{0,\f12}}\eqdef \sum_{\ell\in\Z}2^{\f{\ell}2}
\|\dvl a\|_{L^4_\h(L^2_\v)}\quad\mbox{and}\quad
\|a\|_{\wt L^4_t(\cB_4^{0,\f12})}\eqdef \sum_{\ell\in\Z}2^{\f{\ell}2}
\|\dvl a\|_{L^4_t(L^4_\h(L^2_\v))}.$$
}\end{defi}

In view of the 2-D interpolation inequality that \beq \label{S2eq8} \|a\|_{L^4(\R^2)}\lesssim \|a\|_{L^2(\R^2)}^{\f12}\|\na_\h a\|_{L^2(\R^2)}^{\f12},\eeq
 we find
 \beq \label{S2eq5}
\begin{split}
\|a\|_{\cB_4^{0,\f12}}^2&\lesssim \Bigl(\sum_{\ell\in\Z}2^{\f{\ell}2}
\|\dvl a\|_{L^2}^{\f12}\|\dvl\nablah a\|_{L^2}^{\f12}\Bigr)^2\\
&\leq \Bigl(\sum_{\ell\in\Z}2^{\f{\ell}2}
\|\dvl a\|_{L^2}\Bigr)\Bigl(\sum_{\ell\in\Z}2^{\f{\ell}2}\|\dvl\nablah a\|_{L^2}\Bigr)
=\|a\|_{\cB^{0,\f12}}\|\nablah a\|_{\cB^{0,\f12}}.
\end{split} \eeq
Similarly, we have
\beq \label{S2eq5a} \|a\|_{\wt L^4_t(\cB_4^{0,\f12})}^2
\lesssim\|a\|_{\wt{L}^\infty_t(\cB^{0,\f12})}\|\nablah a\|_\twoB. \eeq

Before preceding, let us recall Bony's decomposition for the
vertical variable from \cite{Bo81}:
\begin{equation}\label{bony}\begin{split}
 ab=T^\v_a b+ R^\v(a,b)\quad\mbox{with}\quad
T^\v_a b=\sum_{\ell\in\Z}S^\v_{\ell-1}a\Delta^\v_\ell b,\quad
R^\v(a,b)=\sum_{\ell\in\Z}\Delta^\v_\ell a S^\v_{j+2}b.
\end{split}\end{equation}
Sometimes we shall also use  Bony's decomposition for the horizontal variables.

Let us now apply the above basic facts on Littlewood-Paley theory to prove the following proposition:

\begin{prop}\label{lemB4}
{\sl For any $a\in\Bh(T),$ there holds
\beq \label{S2eq7} \|a\|_{\wt L_T^4(\cB_4^{0,\f12})}\lesssim\|a\|_{\Bh(T)}. \eeq
}\end{prop}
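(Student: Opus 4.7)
The plan is to split each vertical block using the decomposition \eqref{decom}, $\dvl a = S^\h_{\ell-1}\dvl a + \sum_{k\geq\ell-1}\dhk\dvl a$, and to handle the two pieces separately, matching them against the two ingredients $\|a\|_{\wt L^\infty_T(\Bh)}$ and $\|\nablah a\|_{\wt L^2_T(\Bh)}$ of $\|a\|_{\Bh(T)}$. In each case the time norm $L^4_T$ will be interpolated as $\|\cdot\|_{L^4_T}\leq\|\cdot\|_{L^\infty_T}^{\f12}\|\cdot\|_{L^2_T}^{\f12}$, which is precisely what produces this pairing.

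For the low-horizontal-frequency piece $S^\h_{\ell-1}\dvl a$, I would first establish the anisotropic Gagliardo-Nirenberg inequality $\|f\|_{L^4_\h(L^2_\v)}\lesssim\|f\|_{L^2}^{\f12}\|\nablah f\|_{L^2}^{\f12}$ by applying \eqref{S2eq8} to the scalar function $x_\h\mapsto\|f(x_\h,\cdot)\|_{L^2_\v}$. Raising to the fourth power, integrating in $t$, and taking the fourth root then yields
$$\|S^\h_{\ell-1}\dvl a\|_{L^4_T(L^4_\h(L^2_\v))}\lesssim\|S^\h_{\ell-1}\dvl a\|_{L^\infty_T(L^2)}^{\f12}\|S^\h_{\ell-1}\dvl\nablah a\|_{L^2_T(L^2)}^{\f12}.$$
Multiplying by $2^{\f\ell2}$, summing in $\ell$, and applying Cauchy-Schwarz, the total low-frequency contribution is bounded by $\|a\|_{\wt L^\infty_T(\Bh)}^{\f12}\|\nablah a\|_{\wt L^2_T(\Bh)}^{\f12}\leq\|a\|_{\Bh(T)}$, since $\|S^\h_{\ell-1}\dvl a\|_{L^2}$ is exactly the low-horizontal-frequency ingredient of the $\Bh$ norm.

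For the high-frequency piece, I would combine the Hilbert-valued Littlewood-Paley square function theorem in $L^4_\h(L^2_\v)$ with Minkowski's inequality to obtain, pointwise in $t$,
$$\Bigl\|\sum_{k\geq\ell-1}\dhk\dvl a\Bigr\|_{L^4_\h(L^2_\v)}^2\lesssim\sum_{k\geq\ell-1}\|\dhk\dvl a\|_{L^4_\h(L^2_\v)}^2.$$
Converting to $L^4_T$ via $\|\cdot\|_{L^4_T}^2=\|\cdot^2\|_{L^2_T}$, using Minkowski in $L^2_T$, and then applying the time-H\"older bound together with the Bernstein-type equivalence $\|\dhk\dvl a\|_{L^2_T(L^4_\h(L^2_\v))}\sim 2^{-k}\|\dhk\dvl\nablah a\|_{L^2_T(L^4_\h(L^2_\v))}$ to each dyadic summand, I arrive at
$$\Bigl\|\sum_{k\geq\ell-1}\dhk\dvl a\Bigr\|_{L^4_T(L^4_\h(L^2_\v))}^2\lesssim\sum_{k\geq\ell-1}2^{-k}\|\dhk\dvl a\|_{L^\infty_T(L^4_\h(L^2_\v))}\|\dhk\dvl\nablah a\|_{L^2_T(L^4_\h(L^2_\v))}.$$
A symmetric Cauchy-Schwarz in $k$, splitting $2^{-k}=2^{-k/2}\cdot 2^{-k/2}$, then recasts the right-hand side as $A_\ell B_\ell$, where $A_\ell^2$ and $B_\ell^2$ are exactly the $2^{-k}$-weighted square-sums appearing inside $\|a\|_{\wt L^\infty_T(\Bh)}$ and $\|\nablah a\|_{\wt L^2_T(\Bh)}$ respectively. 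A final multiplication by $2^{\f\ell2}$ and one more Cauchy-Schwarz in $\ell$ completes the bound.

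The main technical obstacle is the exponent bookkeeping in the high-frequency part: the $2^{-k}$ factor produced by inverting $\nablah$ (Bernstein) has to be paired with the $2^{-k}$ weight appearing in the definition of $\Bh$, which forces the symmetric splitting of $2^{-k}$ used in the Cauchy-Schwarz in $k$ so that the $\ell^2$-weighted square-sums of the $\Bh$ norm are reproduced on \emph{both} factors simultaneously. It is equally essential to keep the norm $L^4_\h(L^2_\v)$ throughout rather than passing to $L^2$ via a horizontal Bernstein, since otherwise the high-horizontal-frequency component of the $\Bh$ norm cannot be recovered on the right-hand side.
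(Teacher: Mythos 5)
Your proof is correct, and the treatment of $a_{\rm lh}$ coincides with the paper's: both reduce to the interpolation inequality~\eqref{S2eq5a} and the identification of $\|S^\h_{\ell-1}\dvl a\|$ with the low-horizontal-frequency ingredient of the $\Bh$ norm. The treatment of $a_{\rm hh}$, however, takes a genuinely different route. The paper starts from the identity $\|\dvl a_{\rm hh}\|_{L^4_\h(L^2_\v)}^2 = \|(\dvl a_{\rm hh})^2\|_{L^2_\h(L^1_\v)}$, applies Bony's horizontal decomposition to $(\dvl a_{\rm hh})^2$, and then controls the paraproduct pieces by a Cauchy--Schwarz in $\ell$ together with a discrete Young's inequality to transfer from $S^\h_{k-1}$-blocks to $\D^\h_k$-blocks. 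You instead invoke the Hilbert-valued Littlewood--Paley square-function theorem in $L^4_\h(L^2_\v)$ plus Minkowski (the pointwise $\ell^2$--$L^4_\h(L^2_\v)$ inequality, valid since $4\geq 2$) to obtain $\|\dvl a_{\rm hh}\|_{L^4_\h(L^2_\v)}^2\lesssim\sum_{k\geq\ell-1}\|\dhk\dvl a\|_{L^4_\h(L^2_\v)}^2$, and then split the dyadic weight $2^{-k}=2^{-k/2}\cdot 2^{-k/2}$ symmetrically before Cauchy--Schwarz in $k$. Both arguments are sound. The paper's version is more self-contained within the anisotropic Bony calculus already set up in Section~\ref{secLP} and never leaves the elementary $L^2_\h(L^1_\v)$ framework; your version is arguably cleaner once the square-function theorem is granted, and makes the balance between the $L^\infty_T$ and $L^2_T$ ingredients of $\Bh(T)$ more transparent, but it imports a vector-valued Calder\'on--Zygmund tool that the paper otherwise avoids. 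One small caveat worth flagging explicitly: the identity $\dvl a_{\rm hh}=\sum_{k\geq\ell-1}\dhk\dvl a$ holds only up to a finite shift of the index $k$ coming from the almost-orthogonality of the $\D^\v$ blocks, so the precise lower bound $k\geq\ell-1$ should be read as $k\geq\ell-N_0$ for a fixed small $N_0$; this harmless adjustment does not affect any of the estimates.
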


\begin{proof} In view of \eqref{decom} and Definition \ref{defBh}, we get, by applying  \eqref{S2eq5a}, that
\beno
\begin{split}
\|a_{\rm lh}\|_{\wt L_T^4(\cB_4^{0,\f12})}\lesssim & \|a_{\rm lh}\|_{\wt L_T^\infty(\cB^{0,\f12})}^{\f12} \|\na_\h a_{\rm lh}\|_{\wt L_T^2(\cB^{0,\f12})}^{\f12}\\
\lesssim &\|a\|_{\wt{L}^\infty_T(\Bh)}^{\f12}
\|\nablah a\|_{\wt{L}^2_T(\Bh)}^{\f12}.
\end{split}
\eeno
Then it remains to prove \eqref{S2eq7} for $a_{\rm hh}.$
Indeed in view of Definition \ref{Def2.4},  we write
\begin{equation*}\label{ineqB41}
\|a_{\rm hh}\|_{\wt L_T^4(\cB_4^{0,\f12})}
=\sum_{\ell\in\Z}2^{\f{\ell}2}
\|(\dvl a_{\rm hh})^2\|_{L_T^2(L^2_\h(L^1_\vv))}^{\f12}.
\end{equation*}
Applying Bony's decomposition for the horizontal variables yields
\begin{equation}\label{ineqB42}
(\dvl a_{\rm hh})^2=\sum_{k\in\Z}S^\h_{k-1}\dvl a_{\rm hh}\dhk\dvl a_{\rm hh}
+\sum_{k\in\Z}S^\h_{k+2}\dvl a_{\rm hh}\dhk\dvl a_{\rm hh}.
\end{equation}
We observe that
\begin{equation*}\begin{split}\label{ineqB43}
\sum_{\ell\in\Z}&2^{\f{\ell}2}\Bigl(\sum_{k\in\Z}
\|S^\h_{k-1}\dvl a_{\rm hh}\dhk\dvl a_{\rm hh}\|_{L_T^2(L^2_\h(L^1_\vv))}\Bigr)^{\f12}\\
&\leq\biggl(\sum_{\ell\in\Z}2^{\f{\ell}2}\Bigl(\sum_{k\in\Z}
2^{-k}\|S^\h_{k-1}\dvl a_{\rm hh}\|_{L_T^\infty(L^4_\h(L^2_\vv))}^2\Bigr)^{\f12}
\biggr)^{\f12}\\
&\qquad\quad\times \biggl(\sum_{\ell\in\Z}2^{\f{\ell}2}\Bigl(\sum_{k\in\Z}
2^{k}\|\dhk\dvl a_{\rm hh}\|_{L_T^2(L^4_\h(L^2_\vv))}^2\Bigr)^{\f12}\biggr)^{\f12}\\
&\leq\biggl(\sum_{\ell\in\Z}2^{\f{\ell}2}\Bigl(\sum_{k\in\Z}
2^{-k}\|S^\h_{k-1}\dvl a_{\rm hh}\|_{L_T^\infty(L^4_\h(L^2_\vv))}^2\Bigr)^{\f12}
\biggr)^{\f12}\|\nablah a_{\rm hh}\|_{\wt{L}^2_T(\Bh)}^{\f12}.
\end{split}\end{equation*}
Whereas we get,  by using Young's inequality, that
\begin{align*}
\sum_{k\in\Z}
2^{-k}\|S^\h_{k-1}\dvl a_{\rm hh}\|_{L_T^\infty(L^4_\h(L^2_\vv))}^2
&=\sum_{k\in\Z}
\Bigl(\sum_{k'\leq k-2}2^{-\f{k-k'}2}2^{-\f{k'}2}\|\dhkp\dvl a_{\rm hh}\|
_{L_T^\infty(L^4_\h(L^2_\vv))}\Bigr)^2\\
&\leq\sum_{k\in\Z}2^{-k}\|\dhk\dvl a_{\rm hh}\|
_{L_T^\infty(L^4_\h(L^2_\vv))}^2.
\end{align*}
As a result, it comes out
$$\sum_{\ell\in\Z}2^{\f{\ell}2}\Bigl(\sum_{k\in\Z}
2^{-k}\|S^\h_{k-1}\dvl a_{\rm hh}\|_{L_T^\infty(L^4_\h(L^2_\vv))}^2\Bigr)^{\f12}
\leq \|a\|_{\wt{L}^\infty_T(\Bh)},$$
and
\beno
\sum_{\ell\in\Z}2^{\f{\ell}2}\Bigl(\sum_{k\in\Z}
\|S^\h_{k-1}\dvl a_{\rm hh}\dhk\dvl a_{\rm hh}\|_{L_T^2(L^2_\h(L^1_\vv))}\Bigr)^{\f12}\lesssim \|a\|_{\Bh(T)}.
\eeno
Along the same line, we can prove that the second term in \eqref{ineqB42} shares the same estimate.
This ensures that \eqref{S2eq7} holds for $a_{\rm hh}.$
We thus complete the proof of the proposition.
\end{proof}

\medskip

\setcounter{equation}{0}
\section{Sketch of the proof}

Motivated by the study of the global large solutions to the classical 3-D Navier-Stokes system with
 slowly varying initial data  in  one direction (\cite{CG10, CGZ, CZ15, LZ4}),   here we are going to
decompose the solution of $(ANS)$ as a sum of
a solution to the two-dimensional Navier-Stokes system  with a parameter
and a solution to the three-dimensional perturbed anisotropic Navier-Stokes system.
We point out that compared with the references \cite{CG10, CGZ, CZ15, LZ4}, here the 3-D
 solution to the  perturbed anisotropic Navier-Stokes system will not be small. Indeed only
its vertical component is not small. In order to deal with this part, we are going to appeal to
the observation from \cite{PZ1, Zhang10}, where the authors proved the global well-posedness
 to 3-D anisotropic Navier-Stokes system with the horizontal components of the initial
data being small (see the smallness conditions \eqref{smallPZ1a} and \eqref{smallPZ1}).

For $u^\h=(u^1,u^2),$ we first recall the two-dimensional Biot-Savart's law:
\begin{equation}\label{Helmholtz}
u^\h=u^\h_{\mathop{\rm curl}\nolimits}
+u^\h_{\mathop{\rm div}\nolimits}\with
u^\h_{\mathop{\rm curl}\nolimits}\eqdef\nablah^\perp\Deltah^{-1}(\curlh u^\h)
\andf ~u^\h_{\mathop{\rm div}\nolimits}\eqdef\nablah\Deltah^{-1}(\diveh\uh),
\end{equation} where $\curlh u^\h\eqdefa \pa_1u^2-\pa_2u^1$ and $\diveh\uh\eqdefa \pa_1u^1+\pa_2u^2.$

In particular, let us decompose the horizontal components  $u_0^\h$ of the initial velocity $u_0$  of $(ANS)$ as the sum of
$u^\h_{0,\mathop{\rm curl}\nolimits}$ and $
u^\h_{0,\mathop{\rm div}\nolimits}.$ And we  consider the following 2-D  Navier-Stokes system with a parameter:
\beq \label{S2eq4}
\left\{\begin{array}{l}
\displaystyle \pa_t \baruh +\baruh\cdot\nablah\baruh
-\Delta_\h \baruh=-\nablah \bar p, \qquad (t,x)\in\R^+\times\R^3, \\
\displaystyle \diveh \baruh = 0, \\
\displaystyle  \baruh|_{t=0}=\baruh_0=u^\h_{0,\mathop{\rm curl}\nolimits}.
\end{array}\right.\eeq

Concerning the system \eqref{S2eq4}, we have the following {\it a priori} estimates:

\begin{prop}\label{propbaruh}
{\sl  Let $\baruh_0\in\cB^{0,\f12} $ with  $\Lambda_\h^{-1}\pa_3\baruh_0\in \cB^{0,\f12}.$
Then \eqref{S2eq4} has a unique global solution so that for any time $t>0$, there hold
\begin{equation}\label{baruhestimate1}
\|\baruh\|_{L^\infty_t(\cB^{0,\f12})}
+\|\nablah\baruh\|_{L^2_t(\cB^{0,\f12})}
\leq C\frak{A}_N\bigl(\|\baruh_0\|_{\cB^{0,\f12}}\bigr),
\end{equation}
and
\begin{equation}\label{baruhestimate3}
\begin{split}
\|\Lambda_\h^{-1}\pa_3\baruh\|_{\wt{L}^\infty_t(\cB^{0,\f12})}
+&\|\pa_3\baruh\|_{\wt{L}^2_t(\cB^{0,\f12})}
\leq C\|\Lambda_\h^{-1}\pa_3\baruh_0\|_{\cB^{0,\f12}}
\exp\left(C\frak{A}_N^4\bigl(\|\baruh_0\|_{\cB^{0,\f12}}\bigr)\right),
\end{split}
\end{equation}  where \beq\label{ANuh}
\begin{split}
\baruh_{0,N}\eqdefa& \cF^{-1}\bigl({\bf 1}_{|\xi_3|\leq\frac1N \mbox{or} |\xi_3|\geq N}\cF({\baruh_0})\bigr) \andf\\
\frak{A}_N\bigl(\|\baruh_0\|_{\cB^{0,\f12}}\bigr)\eqdefa& N^{\f12}\|\baruh_0\|_{\cB^{0,\f12}}
\exp\bigl(C\|\baruh_0\|_{\cB^{0,\f12}}^2\bigr)\\
&+\bigl\|\baruh_{0,N}\bigr\|_{\cB^{0,\f12}}\exp\left(N^2\exp\bigl(C\|\baruh_0\|_{\cB^{0,\f12}}^2\bigr)\right),
\end{split}
\eeq and $N$ is taken so large that $\bigl\|\baruh_{0,N}\bigr\|_{\cB^{0,\f12}}$ is sufficiently small.
}\end{prop}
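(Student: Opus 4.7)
The plan is to view \eqref{S2eq4} as a two-dimensional Navier--Stokes system in the horizontal variables parametrized by $x_3$. Since no $x_3$-derivatives appear in the equation, classical 2-D theory applies slice-by-slice: for a.e.\ fixed $x_3$, the datum $\baruh_0(\cdot,x_3)$ belongs to $L^2(\R^2_\h)$ (via the natural embedding from $\cB^{0,\f12}$), and produces a unique global smooth solution with pressure $\bar p = -\Deltah^{-1}\diveh\diveh(\baruh\otimes\baruh)$. Global control in $\cB^{0,\f12}$ then follows from the estimates \eqref{baruhestimate1}--\eqref{baruhestimate3} by a standard continuation argument.

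For \eqref{baruhestimate1}, the main tool is the interpolation \eqref{interpolationcB}, $\|a\|_{\cB^{0,\f12}}\lesssim \|a\|_{L^2}^{\f12}\|\pa_3 a\|_{L^2}^{\f12}$. The $L^2$ bound $\|\baruh\|_{L^\infty_tL^2}\leq \|\baruh_0\|_{L^2}$ is immediate from the 2-D energy identity pointwise in $x_3$. For $\pa_3\baruh$, differentiating \eqref{S2eq4} in $x_3$ yields the linear transport-diffusion system
\begin{equation*}
\pa_t v + \baruh\cdot\nablah v + v\cdot\nablah\baruh - \Deltah v = -\nablah\pa_3\bar p,\quad v\eqdefa \pa_3\baruh,\quad \diveh v=0,
\end{equation*}
whose $L^2$ energy estimate, combined with 2-D Ladyzhenskaya-type inequalities and Gronwall, produces a single-exponential bound for $\|\pa_3\baruh\|_{L^\infty_tL^2}$. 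To extract the specific shape of $\frak{A}_N$, I would split $\baruh_0 = \baruh_0^{\mathrm{mid}} + \baruh_{0,N}$ according to whether vertical frequencies lie in $\{\tfrac1N\leq|\xi_3|\leq N\}$ or outside it. On the mid piece, the vertical Bernstein estimate of Lemma \ref{lemBern} gives $\|\pa_3 \baruh_0^{\mathrm{mid}}\|_{L^2}\lesssim N\|\baruh_0\|_{L^2}$; together with the interpolation this yields the first, $N^{\f12}$-factor single-exponential term of $\frak{A}_N$. The tail $\baruh_{0,N}$ is small in $\cB^{0,\f12}$ by choice of $N$ and is propagated by the linearized equation around the mid-frequency flow, picking up an inner $\exp(C\|\baruh_0\|_{\cB^{0,\f12}}^2)$ Gronwall factor amplified by $N^2$ from the vertical Bernstein loss inside the band, which produces the double-exponential second term of $\frak{A}_N$.

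For \eqref{baruhestimate3}, set $\tilde w\eqdefa \Lambda_\h^{-1}v$. Since $\Lambda_\h^{-1}$ commutes with $\pa_t,\nablah,\Deltah$ and since $\baruh\cdot\nablah v=\diveh(\baruh\otimes v)$, $v\cdot\nablah\baruh=\diveh(v\otimes\baruh)$, the equation for $\tilde w$ reads
\begin{equation*}
\pa_t \tilde w - \Deltah\tilde w = -\Lambda_\h^{-1}\nablah\cdot(\baruh\otimes v+v\otimes\baruh) - \Lambda_\h^{-1}\nablah\pa_3\bar p,
\end{equation*}
so that only the zero-order Fourier multiplier $\Lambda_\h^{-1}\nablah$ acts on the bilinear product. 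I would then apply $\dvl$, take the $L^2$ inner product with $\dvl \tilde w$, and sum with weight $2^{\ell/2}$. Using vertical and horizontal Bony decompositions \eqref{bony}, the anisotropic Bernstein inequalities of Lemma \ref{lemBern}, and the embedding from Proposition \ref{lemB4} together with \eqref{S2eq5a}, the nonlinear contributions are controlled by combinations of $\|\baruh\|_{\wt L^\infty_t(\cB^{0,\f12})}$, $\|\nablah\baruh\|_{\wt L^2_t(\cB^{0,\f12})}$, $\|\tilde w\|_{\wt L^\infty_t(\cB^{0,\f12})}$ and $\|\nablah\tilde w\|_{\wt L^2_t(\cB^{0,\f12})}$. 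Closing by Gronwall and inserting \eqref{baruhestimate1} converts the exponent $\|\baruh\|_{\wt L^\infty_t(\cB^{0,\f12})}^2\|\nablah\baruh\|_{\wt L^2_t(\cB^{0,\f12})}^2\lesssim \frak{A}_N^4$ into the desired $\exp(C\frak{A}_N^4)$ factor.

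The principal obstacle is the careful tracking of the $N$-dependence in \eqref{baruhestimate1}: recovering the refined form of $\frak{A}_N$ rather than a cruder double exponential of $\|\baruh_0\|_{\cB^{0,\f12}}$ requires iterating the mid/tail split with a perturbation argument and choosing $N$ so that the tail is small enough to be absorbed while still producing the stated exponents. The paraproduct estimates in \eqref{baruhestimate3} are more routine once the equation is recast so that $\Lambda_\h^{-1}\nablah$ is the only multiplier hitting the nonlinear product.
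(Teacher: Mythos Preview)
Your overall strategy coincides with the paper's: the same vertical frequency split $\baruh_0=\baruh_0^{\mathrm{mid}}+\baruh_{0,N}$, the same slice-wise 2-D energy identities for $\baruh$ and $\pa_3\baruh$ (this is the paper's Lemma \ref{lem4.1}) combined with the interpolation \eqref{interLpcB} on the mid piece, a perturbative treatment of the tail, and an energy estimate for $\Lambda_\h^{-1}\pa_3\baruh$ using that $\Lambda_\h^{-1}\diveh$ is a zero-order multiplier.

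Two points are genuine gaps, however. First, you cannot ``close by Gronwall'' in the Chemin--Lerner norms $\wt L^p_T(\cB^{0,\f12})$: the $\ell^1$ sum over $\ell$ sits outside the time integral, so there is no differential inequality to which Gronwall applies. The paper replaces Gronwall by the time-weighted norm of Definition \ref{defpz}: one multiplies the equation by $\exp\bigl(-\kappa\int_0^t f^\h\bigr)$, which manufactures an extra good term $\kappa\int_0^t f^\h\|\dvl\cdot\|_{L^2}^2$ on the left, and the paraproduct Lemma \ref{lemfone} is tailored so that the transport terms on the right reappear in exactly the weighted norm $\|\cdot\|_{\wt L^2_{t,f^\h}}$, absorbable by choosing $\kappa$. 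This device is used both for the tail estimate and for $\Lambda_\h^{-1}\pa_3\baruh$. Second, the tail $\baruh_2$ in the paper solves the \emph{fully nonlinear} perturbed system \eqref{eqtbarwh} (including $\baruh_2\otimes\baruh_2$), not merely the linearization; smallness of $\|\baruh_{0,N}\|_{\cB^{0,\f12}}$ enters through a continuity argument to absorb the self-interaction in \eqref{4.15}--\eqref{4.16}. As a byproduct, the $N^2$ in $\frak{A}_N$ is not a ``Bernstein loss inside the band'' but the size of the resulting exponent $\kappa\int_0^\infty f^\h\lesssim \|\baruh_1\|_{\wt L^\infty_t(\cB^{0,\f12})}^2\|\nablah\baruh_1\|_{\wt L^2_t(\cB^{0,\f12})}^2\lesssim N^2\exp(C\|\baruh_0\|_{\cB^{0,\f12}}^2)$ from \eqref{4.6}. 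Finally, $\baruh_0\in\cB^{0,\f12}$ does not give $\baruh_0\in L^2$; the correct mid-piece bounds are $\|\baruh_{1,0}\|_{L^2},\ \|\pa_3\baruh_{1,0}\|_{L^2}\lesssim N^{\f12}\|\baruh_0\|_{\cB^{0,\f12}}$, obtained from the vertical frequency restriction to $[1/N,N]$.
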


The proof of Proposition \ref{propbaruh} will be presented in
Section \ref{secbaruh}.

\begin{rmk} \label{S3rmk1}
Under the assumptions that $\baruh_0\in L^2$ with $~\pa_3\baruh_0\in L^2$  and $\Lambda_\h^{-1}\pa_3\baruh_0\in \cB^{0,\f12},$ we  have the following alternative estimates
for \eqref{baruhestimate1}  and \eqref{baruhestimate3}
\begin{equation}\label{rmkestimatebaruh}
\|\baruh\|_{\wt{L}^\infty_t(\cB^{0,\f12})}
+\|\nablah\baruh\|_{\wt{L}^2_t(\cB^{0,\f12})}
\leq \|\baruh_0\|_{L^2}^{\f12}\|\pa_3\baruh_0\|_{L^2}^{\f12}
\exp\bigl(C\|\baruh_0\|_{L^2}\|\pa_3\baruh_0\|_{L^2}\bigr),
\end{equation}
and
\begin{equation}\label{rmkestimatebaruh2}
\|\Lambda_\h^{-1}\pa_3\baruh\|_{\wt{L}^\infty_t(\cB^{0,\f12})}
+\|\pa_3\baruh\|_{\wt{L}^2_t(\cB^{0,\f12})}
\leq \|\Lambda_\h^{-1}\pa_3\baruh_0\|_{\cB^{0,\f12}}
\exp\left(\exp\bigl(C\|\baruh_0\|_{L^2}\|\pa_3\baruh_0\|_{L^2}\bigr)\right).
\end{equation} We shall present the proof in Remark \ref{rmkr1r2}.
\end{rmk}

We notice that
\beq\label{S2eq4b}
v_0\eqdefa u_0-\bigl(u^\h_{0,\mathop{\rm curl}\nolimits},0\bigr)=\bigl(u^\h_{0,\mathop{\rm div}\nolimits}, u_0^3\bigr)
\eeq
which satisfies $\dive v_0=0,$ and yet $v_0$ is not small according to our smallness condition \eqref{smallnesscondition}.

Before proceeding, let us recall  the main idea of the proof to Theorem \ref{thmLZ} in \cite{LZ4}.
The authors \cite{LZ4} first  constructed  $(\baruh, \bar{p})$ via the system \eqref{S2eq4}.
Then in order to get rid of the large part of the initial data $v_0,$ given by \eqref{S2eq4b}, the  authors
introduced a correction velocity, $\wt u,$ through the system
 \begin{equation}\label{eqtwtu}
\left\{\begin{array}{l}
\displaystyle \pa_t \wt u +\baruh\cdot\nablah\wt u
-\Delta \wt u=-\nabla \wt p,\\
\displaystyle \dive \wt u = 0, \\
\displaystyle \wt u^\h|_{t=0}=\wt{u}_0^\h=-\nablah\Deltah^{-1}(\pa_3 u^3_0),\quad \wt u^3|_{t=0}
=\wt u^3_0=u^3_0.
\end{array}\right.
\end{equation}
With $\baruh$ and $\wtu$ being determined respectively by the systems \eqref{S2eq4} and \eqref{eqtwtu},
the authors \cite{LZ4} decompose the solution $(u,p)$ to the classical Navier-Stokes system $(NS)$ as
\begin{equation}\label{decomsol}
u= \begin{pmatrix}
\baruh\\
0
\end{pmatrix}+\wt u+v,\quad p=\bar{p}+\wt p+q.
\end{equation}
The key estimate for $v$ states as follows:

\begin{prop}\label{S6prop1}
{\sl Let $u=(u^\h,u^3)\in C([0,T^\ast[; H^{\f12})\cap L^2(]0,T^\ast[; H^{\f32})$ be a Fujita-Kato solution of $(NS).$
 We denote $\omega\eqdefa \pa_1 v^2-\pa_2v^1$ and
\begin{equation}\begin{split}\label{defMN}
&M(t)\eqdef \|\nabla v^3(t)\|_{H^{-\f12,0}}^2
+\|\omega(t)\|_{H^{-\f12,0}}^2
,\quad
N(t)\eqdef \|\nabla^2 v^3(t)\|_{H^{-\f12,0}}^2
+\|\nabla\omega(t)\|_{H^{-\f12,0}}^2.
\end{split}\end{equation} Then under the assumption \eqref{smallLZ}, there exists some
positive constant $\eta$ such that
\beq\label{S6eq4}
\sup_{t\in [0,T^\ast[}\Bigl(M(t)+\int_0^tN(t')\,dt'\Bigr)
\leq \eta.
\eeq
}\end{prop}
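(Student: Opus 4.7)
The strategy is to analyse the equation satisfied by $v=u-(\baruh,0)-\wtu$, exploiting two structural facts: $v|_{t=0}=0$, and every piece of the residual forcing can be rewritten so as to factor out $\pa_3 u_0$, which is small by \eqref{smallLZ}. Subtracting \eqref{S2eq4} (trivially extended in $x_3$) and \eqref{eqtwtu} from $(NS)$ yields
\beno
\pa_t v + u\cdot\na v + v\cdot\na\bigl((\baruh,0)+\wtu\bigr) - \D v = -\na q + \cR,
\eeno
with $\cR=(\pa_3^2\baruh,0)-\wtu\cdot\na(\baruh,0)-\wtu\cdot\na\wtu$. The Biot-Savart formula \eqref{Helmholtz} together with $\dive u_0=0$ gives $v|_{t=0}=0$, so $M(0)=0$.

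I would then project onto the good unknowns $(\omega,v^3)$ with $\omega=\pa_1 v^2-\pa_2 v^1$: taking $\curlh$ of the horizontal part of the equation removes the pressure from the $\omega$-equation, while the $v^3$-equation only sees $\pa_3 q$ which, by incompressibility, reduces to a purely horizontal object; moreover $v$ is reconstructed from $(\omega,v^3)$ via $\dive v=0$, so $M$ and $\int N$ indeed control $v$. Testing the $\omega$-equation against $\La_\h^{-1}\omega$ in $L^2$ and the $\na v^3$-equation against $\La_\h^{-1}\na v^3$ in $L^2$ I would derive
\beno
\tfrac12\tfrac{d}{dt}M(t)+N(t)\leq I_1(t)+I_2(t)+I_3(t),
\eeno
where $I_1$ collects the purely nonlinear $v$-terms, $I_2$ the crossed interactions with $(\baruh,0)$ and $\wtu$, and $I_3$ the forcing $\cR$. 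Anisotropic product laws combined with 2D Gagliardo-Nirenberg inequalities in $x_\h$ and the negative-index structure of $H^{-\f12,0}$ recast each contribution to $I_1$ and to the $v$-factors of $I_2$ as $CM^\alpha N^{1-\alpha}\cdot P(\baruh,\wtu)$ with $\alpha<1$, so that the $N$-powers are absorbed into the left-hand side by Young's inequality; the remaining $P$-factors are integrated in time using the a priori bounds on $\baruh$ and $\wtu$, which produce the constants $A_\d$ and $B_\d$ of \eqref{smallLZ}.

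The main obstacle lies in $I_3$ and in those pieces of $I_2$ where $\baruh$ itself appears: $\baruh$ is not small, only bounded by $A_\d(\uh_0)$, so naive estimates would cost exponentials of unavailable $L^\infty$ quantities. Following \cite{LZ4}, the remedy is to integrate by parts vertically in $\cR$ and in the $\wtu$-dependent terms of $I_2$, using the identity $\pa_3\wtu^3=-\diveh\wtu^\h$ (from $\dive\wtu=0$) and the formula $\wtu_0^\h=-\nablah\Deltah^{-1}\pa_3 u_0^3$ to expose one full factor of $\pa_3 u_0$ in every forcing contribution; a regularity criterion from \cite{CZ5} (crucially using the full dissipation of classical $(NS)$) is invoked to control the intermediate norms of $\baruh$ and $\wtu$. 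Integration in time then yields
\beno
\int_0^t\bigl(I_2+I_3\bigr)(\tau)\,d\tau \leq \tfrac12\int_0^t N(\tau)\,d\tau + C\|\pa_3 u_0\|_{H^{-\f12,0}}^2\exp\bigl(CA_\d(\uh_0)+CB_\d(u_0)\bigr)\leq \tfrac\eta2,
\eeno
the last inequality being \eqref{smallLZ}. A continuity argument on the continuous function $t\mapsto M(t)+\int_0^t N(\tau)\,d\tau$, which vanishes at $t=0$, finally produces \eqref{S6eq4} uniformly on $[0,T^\ast[$.
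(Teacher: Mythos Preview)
The paper does not prove this proposition. Proposition~\ref{S6prop1} is quoted verbatim from \cite{LZ4} as background in Section~3 (``Sketch of the proof''), in order to explain why the strategy of \cite{LZ4} for classical $(NS)$ cannot be transplanted to the anisotropic system $(ANS)$. Immediately after stating it, the paper moves on to Theorem~\ref{blowupBesovendpoint} and then to the new decomposition $u=(\baruh,0)+v$ for $(ANS)$; no argument for \eqref{S6eq4} appears anywhere in the present paper.

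Your sketch is broadly in the spirit of \cite{LZ4}: correct initial observation $v|_{t=0}=0$, correct choice of unknowns $(\omega,v^3)$, and the right energy functional $M$. One point is misplaced, however. You invoke the regularity criterion of \cite{CZ5} \emph{inside} the proof of the proposition, ``to control the intermediate norms of $\baruh$ and $\wtu$''. That is not how it is used: $\baruh$ solves the 2D system \eqref{S2eq4} and $\wtu$ the linear problem \eqref{eqtwtu}, and both are estimated directly from their own equations, independently of any criterion on $u$. In \cite{LZ4} (and as the present paper recounts), Theorem~\ref{blowupBesovendpoint} is applied \emph{after} Proposition~\ref{S6prop1} has been established, to conclude from the uniform bound \eqref{S6eq4} that the Fujita--Kato solution cannot blow up, i.e.\ $T^\ast=\infty$. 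So the criterion is the bridge from the a~priori estimate to global existence, not an ingredient of the estimate itself.
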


Then in order to complete the proof of Theorem \ref{thmLZ}, the authors \cite{LZ4} invoked the following regularity criteria
for the classical Navier-Stokes system:

\begin{thm}[Theorem 1.5 of \cite{CZ5}]
\label{blowupBesovendpoint}
{\sl Let $u\in C([0,T^\ast[; H^{\f12})\cap L^2(]0,T^\ast[; H^{\f32})$ be a solution of~$(NS)$.
 If the maximal existence time $T^\ast$ is finite,
then for any~$(p_{i,j})$ in~$]1,\infty[^9$, one has
\begin{equation}\label{blowupCZ5}
\sum_{1\leq i,j\leq3} \int_0^{T^\ast} \|\partial_{i}
u^{j}(t)\|^{p_{i,j}}_{B_{\infty,\infty}
^{-2+\f2{p_{i,j}}}} \,dt=\infty.
\end{equation}
}\end{thm}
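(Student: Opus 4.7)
My plan is to prove the blow-up criterion by contradiction: assume $T^{\ast}<\infty$ but $\sum_{i,j}\int_0^{T^{\ast}}\|\pa_iu^j\|_{B^{-2+\f2{p_{i,j}}}_{\infty,\infty}}^{p_{i,j}}\,dt<\infty$, and then propagate an $\dot H^{\f12}$ a priori bound up to time $T^{\ast}$. Since a Fujita-Kato solution in $C([0,T^{\ast}[;H^{\f12})\cap L^2(]0,T^{\ast}[;H^{\f32})$ which remains bounded in $H^{\f12}$ as $t\uparrow T^{\ast}$ can be continued by the local existence theorem, this will contradict maximality of $T^{\ast}$.

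The main computation I would perform is a Littlewood-Paley localized $\dot H^{\f12}$ estimate. Applying $\D_q$ to $(NS)$, taking $L^2$ inner product with $2^q\D_q u$ and summing yields
\beno
\f12\f{d}{dt}\|u\|_{\dot H^{\f12}}^2+\|\nabla u\|_{\dot H^{\f12}}^2=-\sum_{i,j=1}^3\sum_{q\in\Z}2^q\bigl(\D_q(u^i\pa_i u^j)\,\big|\,\D_q u^j\bigr)_{L^2}.
\eeno
For each pair $(i,j)$, I would use Bony's decomposition to split $u^i\pa_iu^j=T_{u^i}\pa_iu^j+T_{\pa_iu^j}u^i+R(u^i,\pa_iu^j)$ and estimate each piece by placing the factor containing $\pa_iu^j$ into $B^{-2+\f2{p_{i,j}}}_{\infty,\infty}$ via the dyadic characterization $\|\pa_iu^j\|_{B^{-2+\f2{p_{i,j}}}_{\infty,\infty}}\sim\sup_{k}2^{(-2+\f2{p_{i,j}})k}\|\D_k\pa_iu^j\|_{L^\infty}$. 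The $L^\infty$ factor from the low-frequency paraproduct is then paired with two $L^2$ factors — one in $\dot H^{\f12}$ and one in $\dot H^{\f32}$ — after using Bernstein inequalities to convert frequency-localized $L^2$ norms into the correct Sobolev norms. Summing over the dyadic indices $q$ and $k$ with Cauchy-Schwarz (keeping track of a summable sequence $d_q$) should lead, after Young's inequality, to
\beno
\Bigl|\sum_{q\in\Z}2^q\bigl(\D_q(u^i\pa_iu^j)\,\big|\,\D_qu^j\bigr)_{L^2}\Bigr|\leq \f1{20}\|\nabla u\|_{\dot H^{\f12}}^2+C\|\pa_iu^j\|_{B^{-2+\f2{p_{i,j}}}_{\infty,\infty}}^{p_{i,j}}\|u\|_{\dot H^{\f12}}^2.
\eeno
Summing over $(i,j)$, absorbing the gradient term on the left, and applying Gronwall's inequality together with the assumed finiteness of the integrals gives $u\in L^\infty(0,T^{\ast};\dot H^{\f12})\cap L^2(0,T^{\ast};\dot H^{\f32})$, which combined with the $L^2$ energy identity yields the desired $H^{\f12}$ bound and the contradiction.

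The main obstacle will be the endpoint character of the criterion together with the fact that each of the nine entries $\pa_iu^j$ is measured in a different space $B^{-2+\f2{p_{i,j}}}_{\infty,\infty}$. When $p_{i,j}$ is close to $1$ the weak Besov index is close to $0$, so the $L^\infty$ bound on low-frequency blocks leaves almost no room; one must then carefully choose which paraproduct piece carries the derivative $\pa_i$, and exploit $\dive u=0$ to trade a derivative on $u^i$ for derivatives on the other factors. A second delicate point is that the Besov norm is in the $B^{\cdot}_{\infty,\infty}$ scale, which is not separable, so the dominated-convergence-type arguments that allow one to absorb the $\|\nabla u\|_{\dot H^{\f12}}^2$ term must be done uniformly in $q$ and $k$ via genuine quantitative Bernstein-type inequalities. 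Once this bookkeeping is under control, the continuation argument goes through in the standard way.
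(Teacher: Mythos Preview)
The paper does not supply a proof of this theorem at all; it is merely quoted from \cite{CZ5} in order to explain how the earlier work \cite{LZ4} closed the continuation argument for the classical Navier--Stokes system. So there is no ``paper's own proof'' to compare your attempt against.

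That said, your overall plan (contradiction, $\dot H^{1/2}$ energy estimate, paraproduct decomposition, Gronwall) is the right shape, but the key trilinear inequality you write down for a \emph{fixed} pair $(i,j)$,
\[
\Bigl|\sum_{q\in\Z}2^q\bigl(\D_q(u^i\pa_iu^j)\,\big|\,\D_qu^j\bigr)_{L^2}\Bigr|\leq \f1{20}\|\nabla u\|_{\dot H^{1/2}}^2+C\|\pa_iu^j\|_{B^{-2+2/p_{i,j}}_{\infty,\infty}}^{p_{i,j}}\|u\|_{\dot H^{1/2}}^2,
\]
is not correct as stated. The obstruction is the paraproduct $T_{u^i}\pa_iu^j$, where the high frequency sits on $\pa_iu^j$. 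Placing $\D_q\pa_iu^j$ in $L^\infty$ (as the $B^{\cdot}_{\infty,\infty}$ norm demands) forces the two remaining factors $S_{q-1}u^i$ and $\D_qu^j$ into $L^2$, and $\|S_{q-1}u^i\|_{L^2}$ is \emph{not} controlled by $\|u\|_{\dot H^{1/2}}$. If instead you integrate by parts in $x_i$, you get
\[
\bigl(S_{q-1}u^i\,\D_q\pa_iu^j\,\big|\,\D_qu^j\bigr)_{L^2}=-\tfrac12\bigl(S_{q-1}\pa_iu^i\,\big|\,|\D_qu^j|^2\bigr)_{L^2},
\]
which is indeed bounded by $C\,2^{(2-2/p_{i,i})q}\|\pa_iu^i\|_{B^{-2+2/p_{i,i}}_{\infty,\infty}}\|\D_qu^j\|_{L^2}^2$ --- but this calls on the \emph{diagonal} entry $\pa_iu^i$ with exponent $p_{i,i}$, not on $\pa_iu^j$. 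Your allusion to $\dive u=0$ does not help here, since that identity only cancels the \emph{sum} $\sum_i\pa_iu^i$, not an individual diagonal term.

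The upshot is that the nine trilinear pieces cannot be decoupled the way you propose: different paraproduct parts of the $(i,j)$ term are controlled by different entries $\pa_mu^n$ (and in \cite{CZ5} this is handled through a more careful commutator decomposition that mixes the indices). Once you allow each piece to draw on whichever $\|\pa_mu^n\|_{B^{-2+2/p_{m,n}}_{\infty,\infty}}$ is natural for it, the summed estimate
\[
\bigl|(u\cdot\nabla u,u)_{\dot H^{1/2}}\bigr|\leq \tfrac12\|\nabla u\|_{\dot H^{1/2}}^2+C\sum_{m,n}\|\pa_mu^n\|_{B^{-2+2/p_{m,n}}_{\infty,\infty}}^{p_{m,n}}\|u\|_{\dot H^{1/2}}^2
\]
does go through, and the rest of your argument (Gronwall, continuation) is fine.
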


We remark that  Theorem \ref{blowupBesovendpoint} only works
for the classical 3-D Navier-Stokes system. Therefore  the above procedure to prove Theorem \ref{thmLZ}
 can not be applied to construct the global
solutions to the 3-D anisotropic Navier-Stokes system.

On the other hand, we remark that the main observation in \cite{PZ1, Zhang10} is that: by using
$\dive u=0,$ $(ANS)$ can be equivalently reformulated as
\begin{equation*}
(ANS)\quad \left\{\begin{array}{l}
\displaystyle \pa_t u^\h +u^\h\cdot\nabla_\h u^\h+u^3\pa_3u^\h-\Delta_\h u^\h=-\nabla_\h p, \qquad (t,x)\in\R^+\times\R^3, \\
\displaystyle \pa_t u^3 +u^\h\cdot\nabla_3 u^\h-u^3\diveh u^\h-\Delta_\h u^3=-\pa_3 p,\\
\displaystyle \dive u = 0, \\
\displaystyle  u|_{t=0}=(u_0^\h,u_0^3),
\end{array}\right.
\end{equation*}
so that al least seemingly the $u^3$ equation is a linear one. And this explains in some sense
why there is no size restriction for $u_0^3$ in \eqref{smallPZ1a} and \eqref{smallPZ1}.

Motivated by \cite{PZ1, Zhang10}, for $\baruh$ being determined by the systems \eqref{S2eq4},
we  decompose the solution $u$ of $(ANS)$ as $u=\begin{pmatrix}
\baruh\\
0
\end{pmatrix}+v$.
It is easy to verify that the remainder term $v$ satisfies
\begin{equation}\label{eqtv}
\left\{\begin{array}{l}
\displaystyle \pa_t v^\h +v\cdot\nabla v^\h
+\baruh\cdot\nablah v^\h+v\cdot\nabla \baruh
-\Delta_\h v^\h=-\nablah p+\nablah \bar p,\\
\displaystyle \pa_t v^3 +v^\h\cdot\nabla_\h v^3-v^3\diveh v^\h
+\baruh\cdot\nablah v^3
-\Delta_\h v^3=-\pa_3 p,\\
\displaystyle \dive v=0, \\
\displaystyle v|_{t=0}=v_0=\bigl(-\nablah\Deltah^{-1}(\pa_3 u^3_0),u_0^3\bigr).
\end{array}\right.
\end{equation}
We notice that under the smallness condition \eqref{smallnesscondition}, the horizontal
components, $v_0^\h,$ are small in the critical space $\cB^{0,\f12}.$ Then the crucial
ingredient used in the proof of Theorem \ref{thmmain} is that the horizontal components
$v^\h$ of the remainder velocity keeps small for any positive time.

Due to the additional difficulty caused by the fact that $u_0^3$ belongs to the Sobolev-Besov
type space with negative index, as in \cite{CZ07}, we further decompose $v^3$ as
\beq \label{eqv3} v^3=v_F+w,\quad\mbox{where}\quad
v_F(t)\eqdef e^{t\D_\h}u^3_{0,\h\h}\quad\mbox{and}\quad
 u^3_{0,\h\h}\eqdef\sum_{k\geq\ell-1}
\D^\h_k\dvl u^3_0.\eeq
And then $w$ solves
\begin{equation}\label{eqtw}
\left\{\begin{array}{l}
\displaystyle \pa_t w-\Delta_\h w
+v\cdot\nabla v^3+\baruh\cdot\nablah v^3=-\pa_3 p,\\
\displaystyle w|_{t=0}=u^3_{0,{\rm l}\h}\eqdef \sum_{\ell\in\Z}
S^\h_{\ell-1}\dvl u^3_0.
\end{array}\right.
\end{equation}

\begin{prop}\label{aprioriv}
{\sl Let $v$ be a smooth enough solution of \eqref{eqtv} on $[0,T^*[$. Then there exists some positive constant $C$ so that
for any $t\in ]0,T^\ast[,$ we have
\begin{equation}\begin{split}\label{apriorivh}
\|\vh&\|_{\infB}
+\bigl(\f54-C\|\vh\|_{\infB}^{\f12}\bigr)\|\nablah\vh\|_\twoB
\leq\bigl(\|\vh_0\|_{\cB^{0,\f12}}+\|\pa_3\baruh\|_\twoB\bigr)\\
&\qquad\ \times
\exp\Bigl(C\int_0^t\bigl(\|w(t')\|_{\cB^{0,\f12}}^2\|\nablah w(t')\|_{\cB^{0,\f12}}^2
+\|\baruh(t')\|_{\cB_4^{0,\f12}}^4+\|v_F(t')\|_{\cB_4^{0,\f12}}^4\bigr)\,dt'\Bigr),
\end{split}\end{equation}
 and
\begin{equation}\begin{split}\label{aprioriv3}
\Bigl(\f56&-C\bigl(\|\vh\|_{\cB^{0,\f12}(t)}^{\f12}
+\|\pa_3\baruh\|_\twoB^{\f12}\bigr)\Bigr)\|w\|_{\cB^{0,\f12}(t)}\\
\leq& \|u^3_0\|_{\cB^{-\f12,\f12}_4}+C\Bigl(\|\vh\|_{\cB^{0,\f12}(t)}+\|\pa_3\baruh\|_\twoB+\|\vh\|_{\cB^{0,\f12}(t)}^2
\\
&+\bigl(1+
\|\vh\|_{\cB^{0,\f12}(t)}+\|\pa_3\baruh\|_\twoB\bigr)\|v_F\|_{\cB^{-\f12,\f12}_4(t)}\Bigr)\exp\Bigl(C\|\baruh\|_{L^4_t(\cB_4^{0,\f12})}^4\Bigr).
\end{split}\end{equation}
}\end{prop}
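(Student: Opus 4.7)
The plan is to establish the two estimates independently by a Chemin--Lerner energy method: apply the vertical dyadic operator $\dvl$ to each equation, take the $L^2$ inner product with $\dvl\vh$ (resp. $\dvl w$), integrate in time, take the square root of the resulting inequality, multiply by $2^{\ell/2}$, and sum over $\ell\in\Z$. All products on the right-hand side are expanded via Bony's vertical decomposition~\eqref{bony}, so that they reduce to paraproducts $T^\v_a b$ and vertical remainders $R^\v(a,b)$, which are then controlled by the anisotropic Bernstein Lemma~\ref{lemBern}, the interpolation bounds~\eqref{S2eq5}--\eqref{S2eq5a}, and Proposition~\ref{lemB4}. The time-weighted Chemin--Lerner norm of Definition~\ref{defpz} allows Gronwall's inequality to convert integrating factors into the exponentials displayed on the right.

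For~\eqref{apriorivh}, I start from the $\vh$-equation in~\eqref{eqtv}. The horizontal pressure $\nablah(p-\bar p)$ is recovered from the convection group by writing $p = (-\Delta)^{-1}\dive(\cdots)$ together with the analogous formula for $\bar p$; the resulting Fourier multiplier $\na(-\Delta)^{-1}\dive$ is of order $0$ and bounded on $\cB^{0,\f12}$, so it contributes no worse than the convection terms themselves. Expanding $v\cdot\na\vh = \vh\cdot\nablah\vh + (v_F+w)\pa_3\vh$, the self-term $\vh\cdot\nablah\vh$ gives, via \eqref{S2eq5a}, a contribution of the form $C\|\vh\|_{\infB}^{1/2}\|\nablah\vh\|_{\twoB}^{3/2}$, which is moved to the left-hand side as $C\|\vh\|_{\infB}^{1/2}\|\nablah\vh\|_\twoB$, producing the displayed coefficient $\tfrac54-C\|\vh\|_{\infB}^{1/2}$ on the dissipation. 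The pieces involving $w\pa_3\vh$ (after a vertical reorganisation using $\dive v=0$) generate, through Gronwall, the integrating factor $\exp\bigl(C\int_0^t\|w\|_{\cB^{0,\f12}}^2\|\nablah w\|_{\cB^{0,\f12}}^2\,dt'\bigr)$. The transports by $\baruh$ and by $v_F\pa_3$ are each bounded by $\|\cdot\|_{\cB_4^{0,\f12}}\|\cdot\|_{\cB_4^{0,\f12}}\|\cdot\|_\infB$-type expressions; squaring via Young before Gronwall yields the exponential factors $\exp(C\int\|\baruh\|_{\cB_4^{0,\f12}}^4\,dt')$ and $\exp(C\int\|v_F\|_{\cB_4^{0,\f12}}^4\,dt')$. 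Finally, the piece $v^3\pa_3\baruh$ from $v\cdot\na\baruh$, once the $v^3$ factor is absorbed into the Gronwall iteration, supplies the source $\|\pa_3\baruh\|_\twoB$.

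For~\eqref{aprioriv3}, the same energy localization is applied to~\eqref{eqtw}, with $-\pa_3 p$ once again eliminated via the pressure formula. The contributions of $\vh\cdot\nablah w$ and of $v^3\pa_3 w\sim -w\diveh\vh$ combine to produce a bound of size $C(\|\vh\|_{\cB^{0,\f12}(t)}^{1/2}+\|\pa_3\baruh\|_\twoB^{1/2})\|w\|_{\cB^{0,\f12}(t)}$, which is moved to the left-hand side and generates the prefactor $\tfrac56-C(\cdots)$. The transport $\baruh\cdot\nablah w$ is handled via the time-weighted Chemin--Lerner norm and yields, by Gronwall, the exponential $\exp(C\|\baruh\|_{L^4_t(\cB_4^{0,\f12})}^4)$. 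The mixed terms $v\cdot\na v_F$ and $\baruh\cdot\nablah v_F$ contribute the source $(1+\|\vh\|_{\cB^{0,\f12}(t)}+\|\pa_3\baruh\|_\twoB)\|v_F\|_{\cB^{-\f12,\f12}_4(t)}$, where Proposition~\ref{lemB4} is the decisive tool to convert the negative-horizontal-index control on $v_F$ into an $L^4_\h$ estimate suitable for the nonlinear bound. The initial datum $w(0)=u^3_{0,{\rm l}\h}$ is dominated by $\|u^3_0\|_{\cB^{-\f12,\f12}_4}$ through~\eqref{decom}, providing the explicit source on the right.

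The main obstacle I anticipate is a clean handling of the products where \emph{neither} factor is small: the interactions with $\baruh$ (only controlled by the large quantity $\frak{A}_N$ via Proposition~\ref{propbaruh}), with $v_F$ (only in $\Bh$, with negative horizontal index), and with $w$ (estimated by the very inequality we are trying to prove). In each such product one must balance the anisotropic $L^4_\h(L^2_\v)$ norms delivered by \eqref{S2eq5a} and Proposition~\ref{lemB4} against the need to preserve a sequence $2^{\ell/2}\|\dvl\cdot\|_{L^2}$ summable in $\ell$. Closing this summation requires the standard commutator device $[\dvl;S^\v_{\ell-1}a]\dvl b$ of the Chemin--Lerner theory combined with Bony's vertical decomposition~\eqref{bony}, which ultimately factors out a generic $\ell^1$-sequence $d_\ell$ in each bilinear estimate and lets the nonlinear terms be reabsorbed on the right-hand side of the target inequalities.
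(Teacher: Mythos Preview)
Your overall plan matches the paper's: time-weighted Chemin--Lerner energy estimates (Definition~\ref{defpz}) on each equation, Bony's vertical paraproduct, the commutator device $[\dvl;S^\v_{\ell'-1}a]$ for the dangerous vertical-derivative convection, and absorption of the quadratic-in-$\vh$ terms on the left to produce the displayed coefficients. The paper packages the bilinear bounds into two lemmas (Lemma~\ref{lemfone} for $\cB^{0,\f12}$ inputs and Lemma~\ref{lembhcb} when one factor lies in $\Bh$), which is exactly the mechanism you describe.

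Two points require more than your plan acknowledges. First, the pressure does \emph{not} simply ``contribute no worse than the convection terms.'' The commutator argument for $v^3\partial_3\vh$ works only because every operator involved acts purely in $x_3$; after composition with $\nablah(-\Delta)^{-1}\diveh$ this structure is lost, and the paper instead rewrites $v^3\partial_3\vh=\partial_3(v^3\vh)+(\diveh\vh)\vh$ so that the bounded multiplier $\nablah(-\Delta)^{-1}\partial_3$ absorbs the stray vertical derivative. Moreover, $p-\bar p$ carries the large contribution $\diveh\diveh(\baruh\otimes\baruh)$, which you do not mention; smallness is extracted via the identity $(-\Delta)^{-1}-(-\Delta_\h)^{-1}=\partial_3^2(-\Delta)^{-1}(-\Delta_\h)^{-1}$, producing an additional $\|\partial_3\baruh\|_\twoB$ source. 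Second, the paper does \emph{not} split $v^3=v_F+w$ inside $v^3\partial_3\vh$: it keeps $v^3$ intact, applies the commutator, and uses $\partial_3 v^3=-\diveh\vh$ to obtain a contribution $\|\vh\|_\infB\|\nablah\vh_\lambda\|_\twoB^2$ absorbed on the left. The weight $\|w\|_{\cB^{0,\f12}}^2\|\nablah w\|_{\cB^{0,\f12}}^2$ in $f(t)$ does not arise from $w\partial_3\vh$; it enters through $\|v^3\|_{\cB_4^{0,\f12}}\le\|v_F\|_{\cB_4^{0,\f12}}+\|w\|_{\cB^{0,\f12}}^{1/2}\|\nablah w\|_{\cB^{0,\f12}}^{1/2}$ when $v^3$ plays the role of $a$ in Lemma~\ref{lemfone}, e.g.\ in the term $v^3\partial_3\baruh$. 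Your proposed split would leave you trying to control $\partial_3 w$ or $\partial_3 v_F$ separately, neither of which equals $-\diveh\vh$.
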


The proof of the estimates \eqref{apriorivh} and \eqref{aprioriv3} will be presented respectively in Sections \ref{secapriorivh} and \ref{secaprioriv3}.
Now let us admit the above Propositions \ref{propbaruh} and \ref{aprioriv}
temporarily, and continue our proof of Theorem \ref{thmmain}.

\begin{proof}[Proof of Theorem \ref{thmmain}]
It is well-known that the existence of global solutions to a nonlinear partial differential equations
can be obtained by first constructing the approximate solutions, and then performing uniform estimates and finally passing to the limit
to such approximate solutions. For simplicity, here we just present the {\it a priori} estimates for smooth enough solutions of
$(ANS).$

Let $u$ be a smooth enough solution of $(ANS)$ on $[0, T^\ast[$ with $T^\ast$ being the maximal time of existence.
  Let $\baruh$ and $v$ be determined by \eqref{S2eq4} and \eqref{eqtv} respectively. Thanks to \eqref{Helmholtz} and Proposition \ref{propbaruh},
we first take $L, M, N$ large enough and $\e_0$ small enough in \eqref{smallnesscondition} so that
\beq
\label{S2eq20}
\begin{split}
\|\Lambda_\h^{-1}\pa_3\baruh\|_{\wt{L}^\infty_t(\cB^{0,\f12})}
+\|\pa_3\baruh\|_{\wt{L}^2_t(\cB^{0,\f12})}\leq &C\|\Lambda_\h^{-1}\pa_3\uh_0\|_{\cB^{0,\f12}}
\exp\Bigl(C\frak{A}_N^4\bigl(\|u^\h_0\|_{\cB^{0,\f12}}\bigr)
\Bigr)\\
\leq &\f1{16}\quad\mbox{for any }\ \ t>0.
\end{split}
\eeq
We now define
\beq \label{S2eq18}
T^\star\eqdefa \sup\Bigl\{\ t< T^\ast,\ \ C\|v^\h\|_{\cB^{0,\f12}(t)}\leq \f1{16}\ \Bigr\}.
\eeq
Then thanks to \eqref{S2eq20} and Proposition \ref{aprioriv}, for $t\leq T^\star,$ we find
\beq \label{S2eq21}
\begin{split}
\|&\vh\|_{\cB^{0,\f12}(t)}
\leq\bigl(\|\Lambda_\h^{-1}\pa_3 u^3_0\|_{\cB^{0,\f12}}+\|\pa_3\baruh\|_\twoB\bigr)\\
&\quad \times\exp\Bigl(C\int_0^t \bigl(\|w(t')\|_{\cB^{0,\f12}}^2\|\nablah w(t')\|_{\cB^{0,\f12}}^2
+\|\baruh(t')\|_{\cB_4^{0,\f12}}^4+\|v_F(t')\|_{\cB_4^{0,\f12}}^4\bigr)\,dt'\Bigr),
\end{split}\eeq
and
\begin{equation}\begin{split}\label{S2eq19}
\f13\|w\|_{\cB^{0,\f12}(t)}
\leq &\|u^3_0\|_{\cB^{-\f12,\f12}_4}+C\bigl(1+\|v_F\|_{\Bh(t)}\bigr)
\exp\Bigl(C\|\baruh\|_{L^4_t(\cB_4^{0,\f12})}^4\Bigr).
\end{split}\end{equation}
It follows from Lemma \ref{lemCZ07} and Proposition \ref{lemB4} that
\begin{equation*}\label{5.33}
\|v_F\|_{L^4_t(\cB_4^{0,\f12})}\lesssim\|v_F\|_{\Bh(t)}
\lesssim \|u^3_0\|_\Bh,
\end{equation*}
Whereas we deduce from \eqref{S2eq5a} and Proposition \ref{propbaruh} that
\beno
\begin{split}
\|\baruh\|_{\wt L^4_t(\cB_4^{0,\f12})}^4\leq &C\|\baruh\|_{\wt L^\infty_t(\cB^{0,\f12})}^2\|\na_\h\baruh\|_{\wt L^2_t(\cB^{0,\f12})}^2\\
\leq & C\frak{A}_N^4\bigl(\|u^\h_0\|_{\cB^{0,\f12}}\bigr).
\end{split}
\eeno
By inserting the above two inequalities to \eqref{S2eq19} and using \eqref{baruhestimate1}, we obtain that for $t\leq T^\star$
\beq \label{S2eq22}
\begin{split}
\f13\|w\|_{\cB^{0,\f12}(t)}\leq C\bigl(1+\|u_0^3\|_{\cB^{-\f12,\f12}_4}\bigr)\exp\Bigl( C\frak{A}_N^4\bigl(\|u^\h_0\|_{\cB^{0,\f12}}\bigr)\Bigr).
\end{split}
\eeq
Then we deduce that for $t\leq T^\star,$
\begin{align*}
\int_0^t& \bigl(\|w(t')\|_{\cB^{0,\f12}}^2\|\nablah w(t')\|_{\cB^{0,\f12}}^2
+\|\baruh(t')\|_{\cB_4^{0,\f12}}^4+\|v_F(t')\|_{\cB_4^{0,\f12}}^4\bigr)\,dt'\\
&\leq \|w\|_{L^\infty_t(\cB^{0,\f12})}^2
\|\nablah w\|_{L^2_t(\cB^{0,\f12})}^2
+\|\baruh\|_{L^4_t(\cB_4^{0,\f12})}^4+\|v_F\|_{L^4_t(\cB_4^{0,\f12})}^4\\
&\leq C\bigl(1+\|u_0^3\|_{\cB^{-\f12,\f12}_4}^4\bigr)\exp\Bigl(  C\frak{A}_N^4\bigl(\|u^\h_0\|_{\cB^{0,\f12}}\bigr)\Bigr).
\end{align*}
Inserting the above estimates into \eqref{S2eq21} gives rise to
\beq \label{S2eq23}
\|v^\h\|_{\cB^{0,\f12}(t)}\leq \|\La_\h^{-1}\pa_3u_0\|_{\cB^{0,\f12}}\exp\Bigl( C\bigl(1+\|u_0^3\|_{\cB^{-\f12,\f12}_4}^4\bigr)\exp\bigl( C\frak{A}_N^4\bigl(\|u^\h_0\|_{\cB^{0,\f12}}\bigr)\bigr)\Bigr)
\eeq for $t\leq T^\star.$ Therefore, if we take $L, M, N$ large enough and $\e_0$ small enough in \eqref{smallnesscondition},
we deduce from \eqref{S2eq23} that
\beq \label{S2eq24}
C\|v^\h\|_{\cB^{0,\f12}(t)}\leq \f1{32}\quad\mbox{for} \ \ t\leq T^\star.
\eeq
\eqref{S2eq24} contradicts with \eqref{S2eq18}. This in turn shows that $T^\star=T^\ast.$
\eqref{S2eq22} along with \eqref{S2eq24} shows that $T^\ast=\infty.$ Moreover, thanks to \eqref{eqv3}, we have
$\frak{v}\eqdefa u-e^{t\D_h} \begin{pmatrix}
0\\
u^3_{0,{\rm hh}}
\end{pmatrix}\in C([0,\infty[\,;\cB^{0,\f12})$ with $\nablah \frak{v}\in L^2([0,\infty[\,;\cB^{0,\f12}).$
This completes the proof of our Theorem \ref{thmmain}.
\end{proof}

\begin{proof}[Proof of Corollary \ref{S1col1}] Under the assumptions that $\uh_0\in L^2$ with $~\pa_3\uh_0\in L^2$  and $\Lambda_\h^{-1}\pa_3\uh_0\in \cB^{0,\f12},$ we deduce from \eqref{Helmholtz}, \eqref{baruhestimate3} and \eqref{rmkestimatebaruh2} that
\beno\begin{split}
&\|\baruh\|_{\wt{L}^\infty_t(\cB^{0,\f12})}
+\|\nablah\baruh\|_{\wt{L}^2_t(\cB^{0,\f12})}
\leq \|\uh_0\|_{L^2}^{\f12}\|\pa_3\uh_0\|_{L^2}^{\f12}
\exp\bigl(C\|\uh_0\|_{L^2}\|\pa_3\uh_0\|_{L^2}\bigr),\\
&
\|\Lambda_\h^{-1}\pa_3\baruh\|_{\wt{L}^\infty_t(\cB^{0,\f12})}
+\|\pa_3\baruh\|_{\wt{L}^2_t(\cB^{0,\f12})}
\leq \|\Lambda_\h^{-1}\pa_3\uh_0\|_{\cB^{0,\f12}}
\exp\left(\exp\bigl(C\|\uh_0\|_{L^2}\|\pa_3\uh_0\|_{L^2}\bigr)\right).
\end{split}
\eeno
Then by repeating the argument from \eqref{S2eq20} to \eqref{S2eq23}, we conclude the proof of  Corollary \ref{S1col1}.
\end{proof}

\medskip
\setcounter{equation}{0}
\section{Estimates of the 2-D solution $\baruh$}\label{secbaruh}

The goal of  this section is to present the proof of Proposition \ref{propbaruh}. Let us start the proof by the following
lemma, which is in the spirit of Lemma 3.1 of \cite{CG10}.

\begin{lem}\label{lem4.1}
{\sl Let $\ah=(a^1,a^2)$ be a smooth enough solution of
\begin{equation}\label{eqtah}
\left\{\begin{array}{l}
\displaystyle \pa_t \ah +\ah\cdot\nablah\ah
-\Delta_\h \ah=-\nablah \pi,
\qquad (t,x)\in\R^+\times\R^3, \\
\displaystyle \diveh \ah = 0,\\
\displaystyle  \ah|_{t=0}=\ah_0.
\end{array}\right.
\end{equation}
 Then for any $t>0$ and any fixed $x_3\in\R,$
there holds
\begin{equation}\label{4.1}
\|\ah(t,\cdot,x_3)\|_{L^2_\h}^2
+2\int_0^t\|\nablah\ah(t',\cdot,x_3)\|_{L^2_\h}^2dt'
=\|\ah_0(\cdot,x_3)\|_{L^2_\h}^2,
\end{equation}
and
\beq\label{4.1a}
\begin{split}
\|\pa_3\ah(t,\cdot,x_3)\|_{L^2_\h}^2
+&\int_0^t\|\nablah \pa_3\ah(t',\cdot,x_3)\|_{L^2_\h}^2\,dt'
\leq \|\pa_3\ah_0(\cdot,x_3)\|_{L^2_\h}^2
\exp\bigl(C\|\ah_0\|_{L^\infty_\v(L^2_\h)}^2\bigr). \end{split}
\eeq}
\end{lem}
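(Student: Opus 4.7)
The plan is to treat \eqref{eqtah} as a 2-D Navier-Stokes system in the horizontal variables $x_\h$ at each fixed $x_3$, which enters only as a parameter since the equation contains no vertical derivatives. This reduces both estimates to standard 2-D energy identities.

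For \eqref{4.1}, I would take the $L^2_\h$ inner product of the $\ah$-equation with $\ah(t,\cdot,x_3)$. The convection term vanishes because $\diveh \ah = 0$ gives $(\ah\cdot\nabla_\h\ah|\ah)_{L^2_\h}=0$, and integration by parts kills the pressure term for the same reason. This yields
\begin{equation*}
\tfrac12\tfrac{d}{dt}\|\ah(t,\cdot,x_3)\|_{L^2_\h}^2+\|\nabla_\h\ah(t,\cdot,x_3)\|_{L^2_\h}^2=0,
\end{equation*}
and integrating in time gives \eqref{4.1} as an equality.

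For \eqref{4.1a}, set $b\eqdefa\pa_3\ah$ and differentiate \eqref{eqtah} in $x_3$ to obtain
\begin{equation*}
\pa_t b+\ah\cdot\nabla_\h b+b\cdot\nabla_\h\ah-\Delta_\h b=-\nabla_\h\pa_3\pi,\qquad \diveh b=\pa_3\diveh\ah=0.
\end{equation*}
Taking the $L^2_\h$ inner product with $b$, the term $(\ah\cdot\nabla_\h b|b)_{L^2_\h}$ vanishes by $\diveh\ah=0$, and $(\nabla_\h\pa_3\pi|b)_{L^2_\h}$ vanishes by $\diveh b=0$. The only term to estimate is $(b\cdot\nabla_\h\ah|b)_{L^2_\h}$, which I would bound by the 2-D Ladyzhenskaya inequality $\|b\|_{L^4_\h}^2\lesssim\|b\|_{L^2_\h}\|\nabla_\h b\|_{L^2_\h}$ together with Young's inequality:
\begin{equation*}
|(b\cdot\nabla_\h\ah|b)_{L^2_\h}|\leq\|\nabla_\h\ah\|_{L^2_\h}\|b\|_{L^4_\h}^2\leq\tfrac12\|\nabla_\h b\|_{L^2_\h}^2+C\|\nabla_\h\ah\|_{L^2_\h}^2\|b\|_{L^2_\h}^2.
\end{equation*}
This yields
\begin{equation*}
\tfrac{d}{dt}\|b(t,\cdot,x_3)\|_{L^2_\h}^2+\|\nabla_\h b(t,\cdot,x_3)\|_{L^2_\h}^2\leq C\|\nabla_\h\ah(t,\cdot,x_3)\|_{L^2_\h}^2\|b(t,\cdot,x_3)\|_{L^2_\h}^2.
\end{equation*}

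I would then apply Gronwall's inequality and use \eqref{4.1} already proved in the previous step to control the integrating factor via
\begin{equation*}
\int_0^t\|\nabla_\h\ah(t',\cdot,x_3)\|_{L^2_\h}^2\,dt'\leq\tfrac12\|\ah_0(\cdot,x_3)\|_{L^2_\h}^2\leq\tfrac12\|\ah_0\|_{L^\infty_\v(L^2_\h)}^2,
\end{equation*}
which is uniform in $x_3$. This delivers \eqref{4.1a}. There is no real obstacle here since the argument is a pointwise-in-$x_3$ 2-D Navier-Stokes estimate; the only point deserving care is verifying that the pressure contribution to the $\pa_3$-differentiated equation genuinely vanishes in $L^2_\h$, which is secured by the fact that $\diveh$ commutes with $\pa_3$ so $\diveh b=0$.
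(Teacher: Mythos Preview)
Your proposal is correct and follows essentially the same route as the paper's proof: the $L^2_\h$ energy identity for \eqref{4.1}, then differentiating in $x_3$, using $\diveh\ah=0$ to reduce the nonlinear term to $(\pa_3\ah\cdot\nablah\ah\,|\,\pa_3\ah)_{L^2_\h}$, the 2-D Ladyzhenskaya inequality, Young, and Gronwall closed with \eqref{4.1}. The paper does not separate out the pressure term explicitly in the $\pa_3$-differentiated estimate, but your remark that it vanishes via $\diveh b=0$ is correct and makes the argument a touch cleaner.
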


\begin{proof}
By taking  $L^2_\h$ inn-product of \eqref{eqtah} with $\ah$
and using $\diveh \ah=0,$  we obtain \eqref{4.1}.

While by applying  $\pa_3$ to \eqref{eqtah}
and then taking $L^2_\h$ inner product of the resulting equation with
$\pa_3\ah,$ we find
\begin{equation}\label{4.2}
\begin{split}
\f12\f{d}{dt}\|\pa_3\ah(t,\cdot,x_3)\|_{L^2_\h}^2
+&\|\nablah \pa_3\ah(t,\cdot,x_3)\|_{L^2_\h}^2\\
=&-\bigl(\pa_3(\ah\cdot\nablah\ah)(t,\cdot,x_3) \big| \pa_3\ah(t,\cdot,x_3)\bigr)_{L^2_\h}.
\end{split}
\end{equation}
Due to $\diveh \ah=0,$ we get, by applying \eqref{S2eq8}, that
\begin{align*}
\bigl|\bigl(&\pa_3(\ah\cdot\nablah\ah)(t,\cdot,x_3) | \pa_3\ah(t,\cdot,x_3)\bigr)_{L^2_\h}\bigr|\\
&=\bigl|\bigl((\pa_3\ah\cdot\na_\h\ah)(t,\cdot,x_3) | \pa_3\ah(t,\cdot,x_3)\bigr)_{L^2_\h}\bigr|\\
&\leq\|\nablah\ah(t,\cdot,x_3)\|_{L^2_\h} \|\pa_3\ah(t,\cdot,x_3)\|_{L^4_\h}^2\\
&\leq C\|\nablah\ah(t,\cdot,x_3)\|_{L^2_\h} \|\pa_3\ah(t,\cdot,x_3)\|_{L^2_\h}\|\na_\h\pa_3\ah(t,\cdot,x_3)\|_{L^2_\h}.
\end{align*}
Applying Young's inequality yields
\begin{align*}
\bigl|\bigl(\pa_3(\ah\cdot\nablah\ah)(t,\cdot,x_3) |& \pa_3\ah(t,\cdot,x_3)\bigr)_{L^2_\h}\bigr|\\
\leq &\f12\|\nablah\pa_3\ah(t,\cdot,x_3)\|_{L^2_\h}^2
+C\|\nablah\ah(t,\cdot,x_3)\|_{L^2_\h}^2\|\pa_3\ah(t,\cdot,x_3)\|_{L^2_\h}^2.
\end{align*}
Inserting the above estimate into \eqref{4.2} gives rise to
\begin{equation*}
\f{d}{dt}\|\pa_3\ah(t,\cdot,x_3)\|_{L^2_\h}^2
+\|\nablah \pa_3\ah(t,\cdot,x_3)\|_{L^2_\h}^2
\leq C\|\nablah\ah(t,\cdot,x_3)\|_{L^2_\h}^2\|\pa_3\ah(t,\cdot,x_3)\|_{L^2_\h}^2.
\end{equation*}
Applying Gronwall's inequality and using \eqref{4.1}, we achieve
\begin{align*}
\|\pa_3\ah(t,\cdot,x_3)\|_{L^2_\h}^2
&+\int_0^t\|\nablah \pa_3\ah(t',\cdot,x_3)\|_{L^2_\h}^2\,dt'\\
&\leq\|\pa_3\ah_0(\cdot,x_3)\|_{L^2_\h}^2
\exp\Bigl(C\int_0^t
\|\nablah\ah(t',\cdot,x_3)\|_{L^2_\h}^2\,dt'\Bigr)\\
&\leq\|\pa_3\ah_0(\cdot,x_3)\|_{L^2_\h}^2
\exp\bigl(C\|\ah_0(\cdot,x_3)\|_{L^2_\h}^2\bigr),
\end{align*}
which leads to \eqref{4.1a}. This completes the proof of this lemma.
\end{proof}

Let us now present the proof of Proposition \ref{propbaruh}.

\begin{proof}[Proof of Proposition \ref{propbaruh}] For any positive integer $N,$ and $\baruh_{0,N}$  being given by \eqref{ANuh},
we split the solution $\baruh$ to \eqref{S2eq4} as
\beq \label{S4eq1a}
\baruh=\baruh_1+\baruh_2,
\eeq
with $\baruh_1$ and $\baruh_2$ being determined respectively by
\begin{equation}\label{eqtbarvh}
\left\{\begin{array}{l}
\displaystyle \pa_t \baruh_1 +\baruh_1\cdot\nablah\baruh_1
-\Delta_\h \baruh_1=-\nablah \bar p^{(1)},
\qquad (t,x)\in\R^+\times\R^3, \\
\displaystyle \diveh \baruh_1 = 0,\\
\displaystyle \baruh_1|_{t=0}=\baruh_{1,0}\eqdefa\baruh_0-\baruh_{0,N},
\end{array}\right.
\end{equation}
and
\begin{equation}\label{eqtbarwh}
\left\{\begin{array}{l}
\displaystyle \pa_t \baruh_2 +\diveh\bigl(\baruh_2\otimes\baruh_2
+\baruh_1\otimes\baruh_2+\baruh_2\otimes\baruh_1\bigr)
-\Delta_\h \baruh_2=-\nablah \bar p^{(2)},\\
\displaystyle \diveh \baruh_2 = 0,\\
\displaystyle \baruh_2|_{t=0}=\baruh_{2,0}=\baruh_{0,N}.
\end{array}\right.
\end{equation}

We first deduce from Lemma \ref{lem4.1} that
\beno
\begin{split}
\|\baruh_1(t)\|_{L^2}^2
+2\int_0^t\|\nablah\baruh_1(t')\|_{L^2}^2dt'
=&\|\baruh_{1,0}\|_{L^2}^2\lesssim
N \|\baruh_0\|_{\cB^{0,\f12}}^2,\quad\mbox{and}\\
\|\pa_3\baruh_1(t)\|_{L^2}^2
+\int_0^t\|\nablah \pa_3\baruh_1(t')\|_{L^2}^2\,dt'
&\leq \|\pa_3\baruh_{1,0}\|_{L^2}^2
\exp\bigl(C\|\baruh_{1,0}\|_{L^\infty_\v(L^2)}^2\bigr)\\
&\lesssim N\|\baruh_0\|_{\cB^{0,\f12}}^2
\exp\bigl(C\|\baruh_0\|_{\cB^{0,\f12}}^2\bigr), \end{split}
\eeno
which together with \eqref{interLpcB} ensures that
\begin{equation}\label{4.6}
\|\baruh_1\|_{\wt{L}^\infty_t(\cB^{0,\f12})}
+\|\nablah\baruh_1\|_{\wt{L}^2_t(\cB^{0,\f12})}
\leq C N^{\f12}\|\baruh_0\|_{\cB^{0,\f12}}
\exp\bigl(C\|\baruh_0\|_{\cB^{0,\f12}}^2\bigr).
\end{equation}

Next we handle the estimate of $\baruh_2$. To do it,
 for any $\kappa>0,$ we denote
\beq\label{4.3a} f^\h(t)\eqdefa\|\baruh_1(t)\|_{\cB^{0,\f12}}^2\|\na_\h\baruh_2(t)\|_{\cB^{0,\f12}}^2 \andf \baruh_{2,\kappa}(t)\eqdefa \baruh_2(t)
\exp\Bigl(-\kappa \int_0^t f^\h(t')\,dt'\Bigr).
\eeq
Then by multiplying $\exp\Bigl(-\kappa\int_0^t f^\h(t')\,dt'\Bigr)$  to the
$\baruh_2$ equation in \eqref{eqtbarwh}, we write
\beno
\pa_t\baruh_{2,\ka}+\ka f^\h(t)\baruh_{2,\ka}-\D_\h \baruh_{2,\ka}
+\diveh(\baruh_2\otimes\baruh_{2,\ka}
+\baruh_1\otimes\baruh_{2,\ka}+\baruh_{2,\ka}\otimes\baruh_1)=-\na_\h \bar{p}^{(2)}_\ka.
\eeno
Applying the operator $\D_\ell^\v$ to the above equation and taking $L^2$ inner product of the resulting equation with
$\D_\ell^\v\baruh_{2,\ka},$ and then using integration by parts, we get
\begin{equation}\label{4.9}
\begin{split}
\f12\f{d}{dt}&\|\D_\ell^\v\baruh_{2,\ka}(t)\|_{L^2}^2+\ka f^\h(t)\|\D_\ell^\v\baruh_{2,\ka}(t)\|_{L^2}^2
+\|\D_\ell^\v\na_\h\baruh_{2,\ka}\|_{L^2}^2\\
&=-\bigl(\D_\ell^\v(\baruh_2\cdot\nablah\baruh_{2,\ka})
\big| \D_\ell^\v\baruh_{2,\ka}\bigr)_{L^2}
+\bigl(\D_\ell^\v(\baruh_1\otimes\baruh_{2,\ka}+\baruh_{2,\ka}\otimes\baruh_1)
\big| \D_\ell^\v\nablah\baruh_{2,\ka}\bigr)_{L^2}.
\end{split}
\end{equation}

The estimate of the second line of \eqref{4.9} relies on
 the following lemma, whose proof will be postponed in the Appendix \ref{appenda}:
\begin{lem}\label{lemfone}
{\sl Let $a,b,c\in\cB^{0,\f12}(T)$ and $\ff(t)\eqdefa\|a(t)\|_{\cB_4^{0,\f12}}^4$.
Then for any smooth homogeneous Fourier multiplier, $A(D),$ of degree zero
and any $\ell\in\Z$, there hold
\ben
\label{fone1}
&&\int_0^T\bigl|\bigl(\dvl A(D)(a\otimes b)\big|\dvl c\bigr)_{L^2}\bigr|\,dt
\lesssim  d_{\ell}^2 2^{-\ell} \|b\|_\twoBT
\|c\|_\twofBT^{\f12}\|\nablah c\|_\twoBT^{\f12},\\
\label{fone2}
&&\int_0^T\bigl|\bigl(\dvl A(D)(a\otimes b)\big|\dvl c\bigr)_{L^2}\bigr|\,dt
\lesssim  d_{\ell}^2 2^{-\ell}
\|b\|_\twofBT^{\f12}\|\nablah b\|_\twoBT^{\f12}\|c\|_\twoBT.
\een
Moreover, for non-negative function $\fg\in L^\infty(0,T),$ one has
\begin{equation}\begin{split}\label{fone3}
\int_0^T\bigl|\bigl(\dvl A(D)(a\otimes b)\big|\dvl c\bigr)_{L^2}\bigr|
\cdot\fg^2\,dt
\lesssim& d_{\ell}^2 2^{-\ell}\|a\|_\infBT^{\f12}\|\fg\nablah a\|_\twoBT^{\f12}\\
&\times\|\fg b\|_\twoBT
\|c\|_\infBT^{\f12}\|\fg\nablah c\|_\twoBT^{\f12}.
\end{split}\end{equation}
}\end{lem}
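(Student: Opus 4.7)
The plan is to bound each trilinear form by applying Bony's vertical decomposition \eqref{bony}, an anisotropic H\"older in the spatial variables based on the splitting $L^2_\h = L^4_\h \cdot L^4_\h$, the 2D interpolation \eqref{S2eq8}, the anisotropic Bernstein inequalities from Lemma \ref{lemBern}, and finally H\"older or Cauchy--Schwarz in time. Since $A(D)$ is a bounded Fourier multiplier of order zero, it suffices to estimate $(\dvl(a\otimes b)\,|\,\dvl c)_{L^2}$ directly.

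\emph{Spatial estimate.} Write $a\otimes b = T^\v_a b + R^\v(a,b)$. Then $\dvl T^\v_a b$ reduces to a finite sum over $|\ell'-\ell|\leq N_0$; I would bound it by placing $L^\infty_\v(L^4_\h)$ on $S^\v_{\ell'-1}a$ (which is controlled by $\|a\|_{\cB_4^{0,\frac12}}$ via Definition \ref{Def2.4} and vertical Bernstein), $L^2$ on $\dvlp b$, and $L^2_\v(L^4_\h)$ on $\dvl c$ (which is controlled by $\|\dvl c\|_{L^2}^{1/2}\|\nablah\dvl c\|_{L^2}^{1/2}$ via \eqref{S2eq8}). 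This yields the pointwise-in-$t$ inequality
\begin{equation*}
|(\dvl T^\v_a b \,|\, \dvl c)_{L^2}|\lesssim d_\ell^2\, 2^{-\ell/2}\,\|a\|_{\cB_4^{0,\frac12}}\, \|\dvl b\|_{L^2}\, \|\dvl c\|_{L^2}^{1/2}\|\nablah\dvl c\|_{L^2}^{1/2},
\end{equation*}
where the $d_\ell$ are extracted from the Chemin--Lerner norms. For the remainder $R^\v(a,b)$, I would place $L^\infty_\v(L^4_\h)$ on $S^\v_{\ell'+2}b$ and $L^2_\v(L^4_\h)$ on $\dvlp a$; the vertical Bernstein inequality produces a geometric factor $2^{(\ell-\ell')/2}$, so that the resulting sum over $\ell'\geq\ell-N_0$ converges and yields a bound of the same form.

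\emph{Time integration.} Observing that $\|a\|_{\cB_4^{0,\frac12}}^4=\ff(t)$, an application of H\"older in time combined with one Cauchy--Schwarz step distributes the factor $\ff(t)^{1/2}$ onto the $c$-factor via the weighted norm in Definition \ref{defpz}, producing $\|\dvl c\|_{L^2_{t,\ff}(L^2)}^{1/2}\|\nablah\dvl c\|_{L^2_t(L^2)}^{1/2}$ and leaving $\|\dvl b\|_{L^2_t(L^2)}$ on the $b$-factor. Summing over $\ell$ and invoking the definitions of $\twoBT$ and $\twofBT$ yields \eqref{fone1}. For \eqref{fone2}, I would swap the roles of $b$ and $c$ in the spatial H\"older step (placing $L^2_\v(L^4_\h)$ on $\dvl b$ and $L^2$ on $\dvl c$), and then run an entirely analogous time argument.

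For \eqref{fone3}, the extra non-negative weight $\fg^2=\fg\cdot\fg$ has to be distributed: one factor travels with $b$ (producing $\|\fg b\|_\twoBT$), the other with $\nablah c$ (producing $\|\fg\nablah c\|_\twoBT$). The $a$-factor is then controlled by $\|a\|_\infBT^{1/2}\|\fg\nablah a\|_\twoBT^{1/2}$ through a $\fg$-weighted refinement of \eqref{S2eq5a}, while the remaining $c$-factor is bounded by $\|c\|_\infBT^{1/2}$. The main obstacle will be the careful bookkeeping required to extract the $d_\ell^2 2^{-\ell}$ decay uniformly in $\ell$: each Besov-type norm contributes its own generic summable sequence $d_\ell$, and these must be combined by Cauchy--Schwarz in $\ell$ without any logarithmic loss. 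The remainder $R^\v(a,b)$ is the most delicate piece because the unbounded sum $\sum_{\ell'\geq\ell-N_0}$ must be controlled entirely by the geometric factor $2^{(\ell-\ell')/2}$ coming from vertical Bernstein; for \eqref{fone3} the simultaneous presence of two factors of $\fg$ together with $L^\infty_t$ norms on $a$ and $c$ makes the interpolation step between weighted $L^2_t$ and unweighted $L^\infty_t$ Chemin--Lerner norms the subtlest part of the argument.
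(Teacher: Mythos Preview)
Your overall strategy---vertical Bony decomposition, anisotropic H\"older with the $L^4_\h$--$L^4_\h$--$L^2_\h$ splitting, the interpolation \eqref{S2eq8}, and time H\"older against the weight $\ff(t)=\|a(t)\|_{\cB_4^{0,1/2}}^4$---is exactly the paper's, and the paraproduct piece $T^\v_a b$ is handled correctly.

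There is, however, a slip in your treatment of the remainder $R^\v(a,b)$ for \eqref{fone1}. You propose to place $S^\v_{\ell'+2}b$ in $L^4_\h(L^\infty_\v)$ and $\dvlp a$ in $L^4_\h(L^2_\v)$, which forces $\dvl c$ into $L^2$. But then $\|S^\v_{\ell'+2}b\|_{L^4_\h(L^\infty_\v)}$ is controlled only by an $L^4_\h$-based norm of $b$ (essentially $\|b\|_{\cB_4^{0,1/2}}$), not by $\|b\|_{\cB^{0,1/2}}$; and $\dvl c$ in plain $L^2$ does not produce the factor $\|c\|_\twofBT^{1/2}\|\nablah c\|_\twoBT^{1/2}$ that \eqref{fone1} requires. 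Your split is in fact the one that yields \eqref{fone2}, not \eqref{fone1}. For \eqref{fone1} the paper instead places $S^\v_{\ell'+2}b$ in $L^2_\h(L^\infty_\v)$ (bounded by $\|b\|_{\cB^{0,1/2}}$ via vertical Bernstein) and keeps $\dvl c$ in $L^4_\h(L^2_\v)$ through \eqref{S2eq8}, so that the interpolated $c$-factor survives and the weight $\ff$ lands on $c$ after time H\"older---matching the paraproduct bound. Once you swap these exponents the remainder goes through exactly as you describe. A smaller point: the factor $d_\ell^2 2^{-\ell/2}$ you wrote in the pointwise-in-$t$ paraproduct estimate should not appear there; the $d_\ell$ factors and the full $2^{-\ell}$ only emerge after time integration when each $\|\dvlp\cdot\|_{L^p_t(L^2)}$ is replaced by $d_{\ell'}2^{-\ell'/2}$ times the corresponding Chemin--Lerner norm.
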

By applying \eqref{fone3} with $a=c=\baruh_2,$  $b=\na_h\baruh_{2,\kappa}$
and $\ff=
\exp\Bigl(-\kappa\int_0^t f^\h(t')\,dt'\Bigr)$, we get
\begin{equation}\label{4.13}
\int_0^t\bigl|\bigl(\D_\ell^\v(\baruh_2\cdot\nablah\baruh_{2,\ka})
\big| \D_\ell^\v\baruh_{2,\ka}\bigr)_{L^2}\bigr|\,dt'
\lesssim d_\ell^22^{-\ell}\|\baruh_2\|_{\wt L^\infty_t(\cB^{0,\f12})}
\|\nablah\baruh_{2,\ka}\|_{\wt L^2_{t}(\cB^{0,\f12})}^2.
\end{equation}
Whereas due to \eqref{S2eq5}, one has
$$\|\baruh_1(t)\|_{\cB_4^{0,\f12}}^4\lesssim
\|\baruh_1(t)\|_{\cB^{0,\f12}}^2\|\nablah \baruh_1(t)\|_{\cB^{0,\f12}}^2.$$
By applying \eqref{fone2} with $a=\baruh_1,~b=\baruh_{2,\ka},~
c=\nablah\baruh_{2,\ka}$,  we infer
\begin{equation}\label{4.14}
\begin{split}
\int_0^t\bigl|\bigl(\D_\ell^\v(\baruh_1\otimes\baruh_{2,\ka}+\baruh_{2,\ka}\otimes\baruh_1)
\big|&\D_\ell^\v\nablah\baruh_{2,\ka}\bigr)_{L^2}\bigr|\,dt'\\
\lesssim& d_\ell^22^{-\ell}\|\baruh_{2,\ka}\|_{\wt L^2_{t,f^\h}(\cB^{0,\f12})}^{\f12}
\|\nablah\baruh_{2,\ka}\|_{\wt L^2_{t}(\cB^{0,\f12})}^{\f32}.
\end{split}
\end{equation}
Then we get, by  first integrating \eqref{4.9} over $[0,t]$ and inserting \eqref{4.13} and \eqref{4.14} into the resulting inequality, that
\begin{align*}
&\|\D_\ell^\v\baruh_{2,\ka}(t)\|_{L^2}^2+2\ka \int_0^tf^\h(t')\|\D_\ell^\v\baruh_{2,\ka}(t')\|_{L^2}^2\,dt'
+2\|\D_\ell^\v\nablah\baruh_{2,\ka}\|_{L^2_t(L^2)}^2\\
&\leq\|\D_\ell^\v\baruh_{0,N}\|_{L^2}^2
+C d_\ell^22^{-\ell}\Bigl(\|\baruh_{2}\|_{\wt L^\infty_t(\cB^{0,\f12})}
\|\nablah\baruh_{2,\ka}\|_{\wt L^2_{t}(\cB^{0,\f12})}^2\\
&\qquad\qquad\qquad\qquad\qquad\qquad+\|\baruh_{2,\ka}\|_{\wt L^2_{t,f^\h}(\cB^{0,\f12})}^{\f12}
\|\nablah\baruh_{2,\ka}\|_{\wt L^2_t(\cB^{0,\f12})}^{\f32}\Bigr).
\end{align*}
Multiplying the above inequality by $2^\ell$ and taking square root of the resulting inequality, and then summing up the inequalities
for $\ell\in\Z,$  we arrive at
\begin{align*}
&\|\baruh_{2,\ka}\|_{\wt{L}^\infty_t(\cB^{0,\f12})}
+\sqrt{2\ka}\|\baruh_{2,\ka}\|_{\wt L^2_{t,f^\h}(\cB^{0,\f12})}
+\sqrt2\|\nablah\baruh_{2,\ka}\|_{\wt{L}^2_t(\cB^{0,\f12})}\\
&\leq\|\baruh_{0,N}\|_{\cB^{0,\f12}}
+C\Bigl(\|\baruh_{2}\|_{\wt L^\infty_t(\cB^{0,\f12})}^{\f12}
\|\nablah\baruh_{2,\ka}\|_{\wt L^2_{t}(\cB^{0,\f12})}
+\|\baruh_{2,\ka}\|_{\wt L^2_{t,f^\h}(\cB^{0,\f12})}^{\f14}
\|\nablah\baruh_{2,\ka}\|_{\wt L^2_t(\cB^{0,\f12})}^{\f34}\Bigr)\\
&\leq\|\baruh_{0,N}\|_{\cB^{0,\f12}}
+\bigl(\sqrt2-1+C\|\baruh_{2}\|_{\wt L^\infty_t(\cB^{0,\f12})}^{\f12}\bigr)
\|\nablah\baruh_{2,\ka}\|_{\wt L^2_{t}(\cB^{0,\f12})}
+C\|\baruh_{2,\ka}\|_{\wt L^2_{t,f^\h}(\cB^{0,\f12})}.
\end{align*}
In particular, taking $2\ka=C^2$ in the above inequality gives rise to
\begin{equation} \label{4.15}
\|\baruh_{2,\ka}\|_{\wt{L}^\infty_t(\cB^{0,\f12})}
+\bigl(1-C\|\baruh_{2}\|_{\wt L^\infty_t(\cB^{0,\f12})}^{\f12}\bigr)
\|\nablah\baruh_{2,\ka}\|_{\wt{L}^2_t(\cB^{0,\f12})}
\leq\|\baruh_{0,N}\|_{\cB^{0,\f12}}.
\end{equation}
On the other hand, in view of \eqref{ANuh}, we can take $N$ so large that
\beq \label{S4eq2a} C\|\baruh_{0, N}\|_{\cB^{0,\f12}}^{\f12}
\leq \f12.\eeq
Then a standard continuity argument shows that,  for any time $t>0$,
there holds
\begin{equation} \label{4.16}
\|\baruh_{2,\ka}\|_{\wt{L}^\infty_t(\cB^{0,\f12})}
+\f12\|\nablah\baruh_{2,\ka}\|_{\wt{L}^2_t(\cB^{0,\f12})}
\leq \|\baruh_{0,N}\|_{\cB^{0,\f12}}.
\end{equation}
Due to the definition of $\baruh_{2,\la}$ given by \eqref{4.3a},
one has
\begin{equation*}
\begin{split}
\bigl(\|\baruh_{2}\|_{\wt{L}^\infty_t(\cB^{0,\f12})}
+\|\nablah\baruh_{2}\|_{\wt{L}^2_t(\cB^{0,\f12})}\bigr)&\exp\Bigl(-\ka\int_0^tf^\h(t')\,dt'\Bigr)\\
\leq&
\|\baruh_{2,\ka}\|_{\wt{L}^\infty_t(\cB^{0,\f12})}
+\|\nablah\baruh_{2,\ka}\|_{\wt{L}^2_t(\cB^{0,\f12})},
\end{split}
\end{equation*}
which together with \eqref{4.6} and  \eqref{4.16} implies that
\begin{equation}\label{4.17}
\begin{split}
\|\baruh_2\|_{\wt{L}^\infty_t(\cB^{0,\f12})}
+\|\nablah\baruh_2\|_{\wt{L}^2_t(\cB^{0,\f12})}
\leq & 2\|\baruh_{0,N}\|_{\cB^{0,\f12}}
\exp\Bigl(\ka\int_0^tf^\h(t')\,dt'\Bigr)\\
\leq & 2\|\baruh_{0,N}\|_{\cB^{0,\f12}}\exp\left(N^2\exp\bigl(C\|\baruh_0\|_{\cB^{0,\f12}}^2\bigr)\right),
\end{split}\end{equation}

By combining \eqref{4.6} with \eqref{4.17}, we obtain \eqref{baruhestimate1}.

It remains to prove \eqref{baruhestimate3}. In order to do,
for any $\gamma>0,$  we  denote
\beq\label{4.19} g^\h(t)\eqdefa\|\baruh(t)\|_{\cB^{0,\f12}}^2\|\na_\h\baruh(t)\|_{\cB^{0,\f12}}^2 \andf \baruh_\gamma(t)\eqdefa \baruh(t)
\exp\Bigl(-\gamma\int_0^t g^\h(t')\,dt'\Bigr).
\eeq
Then by multiplying $\exp\left(-\gamma\int_0^t g^\h(t')\,dt'\right)$  to the
$\baruh$ equation in \eqref{S2eq4}, we write
\beno
\pa_t\baruh_\ga+\ga g^\h(t)\baruh_\ga-\D_\h \baruh_\ga
+\baruh\cdot\nablah\baruh_\ga=-\na_\h \bar{p}_\ga.
\eeno
Applying the operator $\D_\ell^\v\Lambda_\h^{-1}\pa_3$ to the above equation
and then taking $L^2$ inner product of the resulting equation with
$\D_\ell^\v\Lambda_\h^{-1}\pa_3\baruh_\ga,$ we get
\begin{equation}\label{4.20}
\begin{split}
\f12\f{d}{dt}\|\D_\ell^\v\Lambda_\h^{-1}\pa_3\baruh_\ga(t)\|_{L^2}^2
&+\ga g^\h(t)\|\D_\ell^\v\Lambda_\h^{-1}\pa_3\baruh_\ga(t)\|_{L^2}^2
+\|\D_\ell^\v\na_\h\Lambda_\h^{-1}\pa_3\baruh_\ga\|_{L^2}^2\\
&=-\bigl(\D_\ell^\v\Lambda_\h^{-1}\pa_3(\baruh\cdot\nablah\baruh_\ga)
\big| \D_\ell^\v\Lambda_\h^{-1}\pa_3\baruh_\ga\bigr)_{L^2}\\
&=-\bigl(\D_\ell^\v\Lambda_\h^{-1}\diveh(\baruh\otimes\pa_3\baruh_\ga
+\pa_3\baruh_\ga\otimes\baruh)
\big| \D_\ell^\v\Lambda_\h^{-1}\pa_3\baruh_\ga\bigr)_{L^2}.
\end{split}
\end{equation}
Noting that $\Lambda_\h^{-1}\diveh$ is a bounded Fourier multiplier,
we get, by using \eqref{fone1} with $a=\baruh,~b=
\pa_3\baruh_\ga$ and $c=\Lambda_\h^{-1}\pa_3\baruh_\ga,$ that
\begin{align*}
\int_0^t\bigl|
\bigl(\D_\ell^\v\Lambda_\h^{-1}\diveh(\baruh\otimes\pa_3\baruh_\ga
&+\pa_3\baruh_\ga\otimes\baruh)
\big| \D_\ell^\v\Lambda_\h^{-1}\pa_3\baruh_\ga\bigr)_{L^2}\bigr|\,dt'\\
&\lesssim d_\ell^22^{-\ell}\|\pa_3\baruh_\ga\|_{\wt{L}^2_t(\cB^{0,\f12})}^{\f32}
\|\Lambda_\h^{-1}\pa_3\baruh_\ga\|_{\wt{L}^2_{t,g^\h}(\cB^{0,\f12})}^{\f12}.
\end{align*}
By integrating \eqref{4.20} over $[0,t]$ and then inserting the above estimate into the resulting inequality, we find
\begin{equation*}
\begin{split}
\|\D_\ell^\v\Lambda_\h^{-1}\pa_3\baruh_\ga(t)\|_{L^2}^2&+2\ga \int_0^tg^\h(t')\|\D_\ell^\v\Lambda_\h^{-1}\pa_3\baruh_\ga(t')\|_{L^2}^2\,dt'
+2\|\D_\ell^\v\pa_3\baruh_\ga\|_{L^2_t(L^2)}^2\\
&\leq \|\D_\ell^\v\Lambda_\h^{-1}\pa_3\baruh_0\|_{L^2}^2
+C d_\ell^22^{-\ell}\|\pa_3\baruh_\ga\|_{\wt{L}^2_t(\cB^{0,\f12})}^{\f32}
\|\Lambda_\h^{-1}\pa_3\baruh_\ga\|_{\wt{L}^2_{t,g^\h}(\cB^{0,\f12})}^{\f12}.
\end{split}
\end{equation*}
Multiplying the above inequality by $2^\ell$ and taking square root of the resulting inequality, and then summing up the inequalities
for $\ell\in\Z,$ we arrive at
\beno
\begin{split}
\|\Lambda_\h^{-1}&\pa_3\baruh_\ga\|_{\wt{L}^\infty_t(\cB^{0,\f12})}
+\sqrt{2\ga}\|\Lambda_\h^{-1}\pa_3\baruh_\la\|_{\wt{L}^2_{t,f^\h}(\cB^{0,\f12})}
+\sqrt2\|\pa_3\baruh_\ga\|_{\wt{L}^2_t(\cB^{0,\f12})}\\
\leq &\|\Lambda_\h^{-1}\pa_3\baruh_0\|_{\cB^{0,\f12}}+C \|\pa_3\baruh_\ga\|_{\wt{L}^2_t(\cB^{0,\f12})}^{\f34}
\|\Lambda_\h^{-1}\pa_3\baruh_\ga\|_{\wt{L}^2_{t,g^\h}(\cB^{0,\f12})}^{\f14}\\
\leq &\|\Lambda_\h^{-1}\pa_3\baruh_0\|_{\cB^{0,\f12}}+(\sqrt2-1) \|\pa_3\baruh_\ga\|_{\wt{L}^2_t(\cB^{0,\f12})}+C
\|\Lambda_\h^{-1}\pa_3\baruh_\ga\|_{\wt{L}^2_{t,g^\h}(\cB^{0,\f12})}.
\end{split}
\eeno
In particular, taking $2\ga=C^2$ in the above inequality gives rise to
$$
\|\Lambda_\h^{-1}\pa_3\baruh_\ga\|_{\wt{L}^\infty_t(\cB^{0,\f12})}
+\|\pa_3\baruh_\ga\|_{\wt{L}^2_t(\cB^{0,\f12})}
\leq \|\Lambda_\h^{-1}\pa_3\baruh_0\|_{\cB^{0,\f12}}.
$$
Then a similar derivation from \eqref{4.16} to \eqref{4.17} leads to
\beq \label{S4eq4}
\|\Lambda_\h^{-1}\pa_3\baruh\|_{\wt{L}^\infty_t(\cB^{0,\f12})}
+\|\pa_3\baruh\|_{\wt{L}^2_t(\cB^{0,\f12})}
\leq\|\Lambda_\h^{-1}\pa_3\baruh_0\|_{\cB^{0,\f12}}
\exp\Bigl(\ga\int_0^tg^\h(t')\,dt'\Bigr),\eeq
which together with \eqref{baruhestimate1} ensures \eqref{baruhestimate3}.
This completes the proof of this proposition.
\end{proof}

\begin{rmk}\label{rmkr1r2}
For smoother initial data $\baruh_0,$ we may write explicitly the constant $\frak{A}_N\bigl(\|\baruh_0\|_{\cB^{0,\f12}}\bigr)$ in \eqref{baruhestimate1}. For instance,
if $\baruh_0\in L^2$ with $~\pa_3\baruh_0\in L^2$ and $\Lambda_\h^{-1}\pa_3\baruh_0\in \cB^{0,\f12}$,
we deduce from
Lemma \ref{lem4.1} that
\beno
\begin{split}
&\|\baruh(t)\|_{L^2}^2
+2\int_0^t\|\nablah\baruh(t')\|_{L^2}^2dt'
=\|\baruh_0\|_{L^2}^2,\quad\mbox{and}\\
\|\pa_3\baruh&(t)\|_{L^2}^2
+\int_0^t\|\nablah \pa_3\baruh(t')\|_{L^2}^2\,dt'
\leq \|\pa_3\baruh_0\|_{L^2}^2
\exp\bigl(C\|\baruh_0\|_{L^\infty_\v(L^2)}^2\bigr), \end{split}
\eeno
 which together with \eqref{interLpcB} and
$$\|\baruh_0\|_{L^\infty_\v(L^2_\h)}^2\leq \|\baruh_0\|_{L^2_\h(L^\infty_\v)}^2\leq \|u_0^\h\|_{\cB^{0,\f12}}^2
\leq \|u^\h_0\|_{L^2}\|\pa_3u^\h_0\|_{L^2},$$
 ensures \eqref{rmkestimatebaruh}. By virtue of  \eqref{rmkestimatebaruh} and \eqref{S4eq4},
 we deduce  \eqref{rmkestimatebaruh2}.
\end{rmk}

\medskip
\setcounter{equation}{0}
\section{The estimate of the horizontal components $v^\h$}\label{secapriorivh}

The goal of this section is to present the proof of \eqref{apriorivh}, namely, we are going
to deal with the estimate to the horizontal components of the remainder velocity determined by \eqref{eqtv}.

In order to do so, let $u$ be a smooth enough solution of $(ANS)$ on $[0,T^\ast[,$ let $\baruh, v_F$ and $w$
be determined respectively by \eqref{S2eq4}, \eqref{eqv3} and \eqref{eqtw},
for any constant $\lam>0$, we  denote
\begin{equation}\label{5.1}
\begin{split}
&\vh_{\lam}(t)\eqdef\vh(t)\exp\Bigl(-\lam\int_0^tf(t')\,dt'\Bigr)\with\\
f(t)\eqdefa &\|w(t)\|_{\cB^{0,\f12}}^2\|\nablah w(t)\|_{\cB^{0,\f12}}^2
+\|\baruh(t)\|_{\cB_4^{0,\f12}}^4+\|v_F(t)\|_{\cB_4^{0,\f12}}^4,
\end{split}
\end{equation}
and similar notations for
$\baruh_\lam,~p_\lam,~\bar p_\lam$ and $\vh_{\lam/2}$.

By multiplying $\exp\Bigl(-\lam\int_0^tf(t')\,dt'\Bigr)$ to the $\vh$ equation of
 \eqref{eqtv}, we get
$$\pa_t \vhl+\lam f(t)\vhl +v\cdot\nabla \vhl
+\baruh\cdot\nablah \vhl+v_\lam\cdot\nabla \baruh
-\Delta_\h \vhl=-\nablah p_\lam+\nablah \bar p_\lam.
$$
Applying $\dvl$ to the above equation and taking $L^2$ inner product of the
resulting equation with $\dvl\vhl$, and then integrating the equality over $[0,t],$ we obtain
\begin{equation}\label{5.2}
\f12\|\dvl\vhl(t)\|_{L^2}^2+\lam\int_0^t f(t')\|\dvl\vhl\|_{L^2}^2\,dt'
+\int_0^t\|\nablah\dvl\vhl\|_{L^2}^2\,dt'
=\f12\|\dvl\vh_0\|_{L^2}^2
-\sum_{i=1}^6{\rm I}_i,
\end{equation}
where
\begin{align*}
&{\rm I}_1\eqdefa \int_0^t\bigl(\dvl(\baruh\cdot\nablah \vhl)
\,\big|\,\dvl\vhl\bigr)_{L^2}\,dt',\quad
{\rm I}_2\eqdefa\int_0^t\bigl(\dvl(v^\h\cdot\nablah \vhl)
\,\big|\,\dvl\vhl\bigr)_{L^2}\,dt',\\
&{\rm I}_3\eqdefa\int_0^t\bigl(\dvl(\vh_\lam\cdot\nablah \baruh)
\,\big|\,\dvl\vhl\bigr)_{L^2}\,dt',\quad
{\rm I}_4\eqdefa\int_0^t\bigl(\dvl (v^3\pa_3 \baruh_\lam)
\,\big|\,\dvl\vhl\bigr)_{L^2}\,dt',\\
&{\rm I}_5\eqdefa\int_0^t\bigl(\dvl(v^3\pa_3 \vhl)
\,\big|\,\dvl\vhl\bigr)_{L^2}\,dt',\qquad
{\rm I}_6\eqdefa\int_0^t\bigl(\dvl\nablah(p_\lam-\bar p_\lam)
\,\big|\,\dvl\vhl\bigr)_{L^2}\,dt'.
\end{align*}
We mention that since our system \eqref{eqtv} has
only horizontal dissipation, it is reasonable to
distinguish the terms above with horizontal derivatives
from the ones with vertical derivative. Next let us handle term by term above.

\noindent$\bullet$\underline{
The estimates of ${\rm I}_1$ to ${\rm I}_4.$}

We first get, by using \eqref{fone1} with $a=\baruh,~b=\nablah\vhl$
and $c=\vhl,$ that
\begin{equation}\label{5.3}
|{\rm I}_1|\lesssim d_{\ell}^2 2^{-\ell}\|\vhl\|_\twofB^{\f12}
\|\nablah\vhl\|_\twoB^{\f32}.
\end{equation}
Applying \eqref{fone3} with $a=\vh,~b=\nablah\vh,~c=\vh$
and $\fg(t)=\exp\bigl(-\lam\int_0^tf(t')\,dt'\bigr)$ yields
\begin{equation}\label{5.4}
|{\rm I}_2|
\lesssim d_\ell^2 2^{-\ell}\|\vh\|_{\infB}\|\nablah\vhl\|_\twoB^2.
\end{equation}

To handle ${\rm I}_3,$
by using integration by parts, we write
\begin{equation*}\label{5.5}
{\rm I}_3=-\int_0^t\bigl(\dvl(\diveh\vhl\cdot \baruh)
\big|\dvl\vhl\bigr)_{L^2}\,dt'
-\int_0^t\bigl(\dvl(\baruh\otimes\vhl)
\big|\dvl\nablah\vhl\bigr)_{L^2}\,dt'.
\end{equation*}
Applying \eqref{fone1} with $a=\baruh,~b=\diveh\vhl$
and $c=\vhl$ gives
$$
\Bigl|\int_0^t\bigl(\dvl(\diveh\vhl\cdot \baruh)
\big|\dvl\vhl\bigr)_{L^2}\,dt'\Bigr|
\lesssim d_{\ell}^2 2^{-\ell}\|\vhl\|_\twofB^{\f12}
\|\nablah\vhl\|_\twoB^{\f32}.
$$
Whereas applying \eqref{fone2} with
$a=\baruh,~b=\vhl$ and $c=\nablah\vhl$ yields
$$
\Bigl|\int_0^t\bigl(\dvl(\baruh\otimes\vhl)
\big|\dvl\nablah\vhl\bigr)_{L^2}\,dt'\Bigr|
\lesssim d_{\ell}^2 2^{-\ell}\|\vhl\|_\twofB^{\f12}
\|\nablah\vhl\|_\twoB^{\f32}.
$$
As a result, it comes out
\begin{equation}\label{5.6}
|{\rm I}_3|\lesssim d_{\ell}^2 2^{-\ell}\|\vhl\|_\twofB^{\f12}
\|\nablah\vhl\|_\twoB^{\f32}.
\end{equation}

While by applying \eqref{fone1}
with $a=v^3,~b=\pa_3 \baruh_\lam,~c=\vhl$, and using the fact that
\beno
\|v^3(t)\|_{\cB^{0,\f12}_4}\leq \|v_F(t)\|_{\cB^{0,\f12}_4}+\|w(t)\|_{\cB^{0,\f12}}^{\f12}\|\na_\h w(t)\|_{\cB^{0,\f12}}^{\f12},\eeno
we find
\begin{equation}\label{5.7}
|{\rm I}_4|\lesssim d_{\ell}^2 2^{-\ell}\|\vhl\|_\twofB^{\f12}
\|\nablah\vhl\|_\twoB^{\f12}\|\pa_3\baruh_\lam\|_\twoB.
\end{equation}

\noindent$\bullet$\underline{
The estimates of ${\rm I}_5$.}

The estimate of ${\rm I}_5$ is much more complicated,
since there is no vertical dissipation in $(ANS)$.
To overcome this difficulty, we first use Bony's decomposition
in vertical variable \eqref{bony} to write
$${\rm I}_5=\int_0^t\bigl(\D_\ell^\v\bigl(T^\v_{v^3}\pa_3 \vhl
+ R^\v({v^3},\pa_3 \vhl) \bigr)\,\big|\,\dvl\vhl\bigr)_{L^2}
\,dt'\eqdefa {\rm I}_5^{T}+{\rm I}_5^{R}.$$
Following \cite{CZ07, Pa02}, we get, by using a standard commutator's process, that
\begin{align*}
{\rm I}_5^{T}=\sum_{|\ell'-\ell|\leq 5}\Bigl(&\int_0^t\bigl([\dvl; S^\v_{\ell'-1}v^3]\dvlp\pa_3\vhl \,\big|\,\dvl\vhl\bigr)_{L^2}\,dt'
\\
&+\int_0^t\bigl((S^\v_{\ell'-1}v^3-S^\v_{\ell-1}v^3)
\D_\ell^\v\dvlp\pa_3\vhl \,\big|\,\dvl\vhl\bigr)_{L^2}\,dt'\Bigr)\\
+\int_0^t&\bigl(S^\v_{\ell-1}v^3\dvl\pa_3\vhl \,\big|\,\dvl\vhl\bigr)_{L^2}\,dt'
\eqdef{\rm I}_5^{T,1}+{\rm I}_5^{T,2}+{\rm I}_5^{T,3}.
\end{align*}
By applying  commutator's estimate
(see Lemma $2.97$ in \cite{BCD}), we find
\begin{align*} \bigl|{\rm I}_5^{T,1}\bigr|
&\leq
\sum_{|\ell'-\ell|\leq 5}\|[\dvl; S^\v_{\ell'-1}v^3_\lam]
\dvlp\pa_3 \vh_{\lam/2}\|_{L^{\f43}_t(L^{\f43}_\h(L^2_\v))}
\|\dvl \vh_{\lam/2}\|_{L^4_t(L^4_\h(L^2_\v))}\\
&\lesssim\sum_{|\ell'-\ell|\leq 5}2^{-\ell}
\|\pa_3S^\v_{\ell'-1}v^3_\lam\|_{L^2_t(L^2_\h(L^\infty_\v))}
\|\dvlp\pa_3 \vh_{\lam/2}\|_{L^4_t(L^4_\h(L^2_\v))}
\|\dvl \vh_{\lam/2}\|_{L^4_t(L^4_\h(L^2_\v))}.
\end{align*}
Due to $\pa_3v^3=-\diveh v^\h,$ we get, by applying \eqref{S2eq8}, that
\begin{equation*}\begin{split}\label{5.10}
  \bigl|{\rm I}_5^{T,1}\bigr|
&\lesssim\sum_{|\ell'-\ell|\leq5}2^{-\ell}
\|S^\v_{\ell'-1}\diveh\vhl\|_{L^2_t(L^2_\h(L^\infty_\v))}
2^{\ell'}\|\dvlp \vh_{\lam/2}\|_{L^4_t(L^4_\h(L^2_\v))}
\|\dvl\vh_{\lam/2}\|_{L^4_t(L^4_\h(L^2_\v))}\\
&\lesssim\sum_{|\ell'-\ell|\leq5} \|\nablah\vhl\|_\twoB
\|\dvlp \vh\|_{L^\infty_t(L^2)}^{\f12}\|\na_\h\dvlp \vh_\la\|_{L^2_t(L^2)}^{\f12}\\
&\qquad\qquad\qquad\qquad\qquad\qquad\qquad\qquad\times\|\D_\ell^\v \vh\|_{L^\infty_t(L^2)}^{\f12}
\|\na_\h\D_\ell^\v \vh_\la\|_{L^2_t(L^2)}^{\f12}\\
&\lesssim
d_\ell^2 2^{-\ell}\|\vh\|_\infB\|\nablah \vh_\la\|_\twoB^2.
\end{split}\end{equation*}
Next,
 since the support to the Fourier transform of
$\sum_{|\ell'-\ell|\leq5}
(S^\v_{\ell'-1}v^3-S^\v_{\ell-1}v^3)$ is contained in $\R^2\times \cup_{|\ell'-\ell|\leq 5}2^{\ell'}\cC_\v,$ we get, by applying Lemma \ref{lemBern}, that
\beno
\begin{split}
\bigl|{\rm I}_5^{T,3}\bigr|
\lesssim & \sum_{|\ell'-\ell|\leq 5}2^{-\ell}\|\pa_3(S^\v_{\ell'-1}v^3_\lam-S^\v_{\ell-1}v^3_\lam)\|_{L^2_t(L^2_\h(L^\infty_\v))}
\|\dvlp\pa_3\vh_{\lam/2}\|_{L^4_t(L^4_\h(L^2_\v))}\|\dvl \vh_{\lam/2}\|_{L^4_t(L^4_\h(L^2_\v))},
\end{split}
\eeno
from which, we infer
\beno
\begin{split}
\bigl|{\rm I}_5^{T,3}\bigr|
&\lesssim d_\ell^2 2^{-\ell}\|\vh\|_\infB\|\nablah\vhl\|_\twoB^2.
\end{split}
\eeno
As a result, it comes out
\begin{equation}\label{5.9}
\bigl|{\rm I}_5^{T}\bigr|\lesssim d_\ell^2 2^{-\ell}\|\vh\|_\infB
\|\nablah\vhl\|_\twoB^2.
\end{equation}
Finally,  by using integration by parts and $\pa_3v^3=-\diveh v^\h$
again,  we find
\begin{equation*}\begin{split}\label{5.8}
\bigl|{\rm I}_5^{T,3}\bigr|&=\f12\Bigl|\int_0^t\int_{\R^3}
S^\v_{\ell-1}\pa_3 v^3_\lam\cdot\bigl|\dvl\vh_{\lam/2}\bigr|^2\,dxdt'\Bigr|\\
&\lesssim\|S^\v_{\ell-1}\diveh\vhl\|_{L^2_t(L^2_\h(L^\infty_\v))}
\|\dvl \vh_{\lam/2}\|_{L^4_t(L^4_\h(L^2_\v))}^2\\
&\lesssim d_\ell^2 2^{-\ell}\|\vh\|_\infB\|\nablah\vhl\|_\twoB^2.
\end{split}\end{equation*}

On the other hand, by applying Lemma \ref{lemBern} once again, we find
\begin{align*}
\bigl|{\rm I}_5^{R}\bigr|&\lesssim
\sum_{\ell'\geq\ell-4}
\|\dvlp v^3_\lam\|_{L^2_t(L^2)}
2^{\ell'}\|S^\v_{\ell'+2} \vh_{\lam/2}\|_{L^4_t(L^4_\h(L^\infty_\v))}
\|\dvl \vh_{\lam/2}\|_{L^4_t(L^4_\h(L^2_\v))}\\
&\lesssim
\sum_{\ell'\geq\ell-4}
\|\pa_3\dvlp v^3_\lam\|_{L^2_t(L^2)}
\|S^\v_{\ell'+2}\vh_{\lam/2}\|_{L^4_t(L^4_\h(L^\infty_\v))}
\|\dvl\vh_{\lam/2}\|_{L^4_t(L^4_\h(L^2_\v))}.
\end{align*}
Observing that
\beno
\begin{split}
\|\pa_3\dvlp v^3_\lam\|_{L^2_t(L^2)}
\lesssim &  d_{\ell'}2^{-\f{\ell'}2}\|\diveh\vhl\|_\twoB,\\
\|S^\v_{\ell'+2}\vh_{\lam/2}\|_{L^4_t(L^4_\h(L^\infty_\v))}
\lesssim &\|\vh\|_\infB^{\f12}\|\nablah \vh_\lam\|_\twoB^{\f12},
\end{split}
\eeno
we infer
\begin{equation*}\label{5.11}
\bigl|{\rm I}_5^{R}\bigr|\lesssim
d_\ell^2 2^{-\ell}\|\vh\|_\infB
\|\nablah\vhl\|_\twoB^2,
\end{equation*}
which together with \eqref{5.9} ensures that
\begin{equation}\label{5.12}
|{\rm I}_{5}|\lesssim
d_\ell^2 2^{-\ell}\|\vh\|_\infB\|\nablah\vhl\|_\twoB^2.
\end{equation}

\noindent$\bullet$\underline{
The estimates of ${\rm I}_6.$}

We first get, by taking the space divergence operators, $\dive$ and $\diveh,$ to $(ANS)$ and \eqref{S2eq4} respectively, that
\beq \label{5.12a} -\D p=\dive(u\cdot\nabla u)\quad\mbox{and}\quad
-\D_\h \bar p=\diveh(\baru\cdot\nablah \baru),\eeq
so that thanks to the fact that $$
u=(u^\h,u^3)=(\baruh,0)+(v^\h,v^3),$$ we write
\beno
\begin{split}
\na_\h p-\na_\h\bar{p}=&\na_\h(-\D)^{-1}\diveh(v\cdot\na u^\h+\bar{u}^\h\cdot\na_\h v^\h)\\
&+\nablah(-\D)^{-1}\pa_3(u\cdot\na v^3)\\
&+\na_\h\bigl((-\D)^{-1}-(-\D_\h)^{-1}\bigr)\diveh\diveh\bigl(\baruh\otimes\baruh\bigr).
\end{split}
\eeno
Accordingly, we  decompose ${\rm I}_6$ as
$${\rm I}_6=
{\rm I}_{6,1}+{\rm I}_{6,2}+{\rm I}_{6,3}+{\rm I}_{6,4},$$ where
\beno
\begin{split}
{\rm I}_{6,1}=&\int_0^t\bigl(\dvl\nablah(-\D)^{-1}\diveh
\bigl(\baruh\cdot\nablah\vhl
+v^\h\cdot\nabla_\h\vhl+v_\lam\cdot\nabla\baruh\bigr)\,\big|\,\dvl\vhl\bigr)_{L^2}\,dt',\\
{\rm I}_{6,2}=&\int_0^t\bigl(\dvl\nablah(-\D)^{-1}\diveh
(v^3\pa_3 v^\h_\la)\,\big|\,\dvl\vhl\bigr)_{L^2}\,dt',\\
{\rm I}_{6,3}=&\int_0^t\bigl(\dvl\nablah(-\D)^{-1}\pa_3
\bigl(v_\lam\cdot\nabla v^3+\baruh\cdot\nablah v^3_\lam
\bigr)\,\big|\,\dvl\vhl\bigr)_{L^2}\,dt',\\
{\rm I}_{6,4}=&\sum_{i=1}^2\sum_{j=1}^2\int_0^t\bigl(\dvl\nablah
\bigl((-\D)^{-1}-(-\D_\h)^{-1}\bigr)
\pa_i\pa_j(\baru^i \baru^j_\lam)\,\big|\,\dvl\vhl\bigr)_{L^2}\,dt'.
\end{split}
\eeno

Noticing that $\nablah(-\D)^{-1}\diveh$ is a bounded Fourier multiplier.
Then along the same line to the estimate of ${\rm I}_1$ to ${\rm I}_4,$ we achieve
\begin{equation}\begin{split}\label{5.13}
|{\rm I}_{6,1}|\lesssim d_{\ell}^2 2^{-\ell}
\Bigl(&\|\vh\|_{\infB}\|\nablah\vhl\|_\twoB^2\\
&+\|\vhl\|_\twofB^{\f12}\|\nablah\vhl\|_\twoB^{\f12}
\bigl(\|\nablah\vhl\|_\twoB+\|\pa_3\baruh_\lam\|_\twoB\bigr)\Bigr).
\end{split}\end{equation}

However, ${\rm I}_{6,2}$ can not be handled along the same line to that of
${\rm I}_5$, since the symbol of the operator $\nablah(-\D)^{-1}\diveh$
depends  not only on $\xi_3$, but also on $\xi_\h,$ which makes it impossible for us to
deal with the commutator's estimate.
Fortunately, the appearance of the operator $(-\D)^{-1}$ can absorb the vertical derivative. Indeed,
by using integration by parts, and the divergence-free condition of $v,$ we write
\begin{align*}
{\rm I}_{6,2}&=\int_0^t\Bigl(\dvl\nablah(-\D)^{-1}\diveh
\bigl(\pa_3(v^3 v^\h_\la)-\pa_3 v^3\cdot\vhl\bigr)
\,\Big|\,\dvl\vhl\Bigr)_{L^2}\,dt'\\
&=-\int_0^t\bigl(\dvl\nablah(-\D)^{-1}\pa_3
( v^3 v^\h_\la)\,\big|\,\dvl\nablah\vhl\bigr)_{L^2}\,dt'\\
&\qquad\qquad+\int_0^t\bigl(\dvl\nablah(-\D)^{-1}\diveh
(\diveh\vh\cdot \vhl)\,\big|\,\dvl\vhl\bigr)_{L^2}\,dt'.
\end{align*}
Since both $\nablah(-\D)^{-1}\pa_3$ and $\nablah(-\D)^{-1}\diveh$
are bounded Fourier multiplier,  we get, by applying Lemma \ref{lemfone}, that
\beq \label{5.13a}
|{\rm I}_{6,2}|\lesssim d_{\ell}^2 2^{-\ell}
\bigl(\|\vhl\|_\twofB^{\f12}\|\nablah\vhl\|_\twoB^{\f32}
+\|\vh\|_{\infB}\|\nablah\vhl\|_\twoB^2\bigr).
\eeq

To handle ${\rm I}_{6,3}$, we use $\dive v=\diveh\baruh=0$
 to write
\begin{align*}
{\rm I}_{6,3}=&\int_0^t\bigl(\dvl\nablah(-\D)^{-1}\pa_3\dive
(v_\lam v^3)+\dvl\nablah(-\D)^{-1}\pa_3\diveh
(\baruh v^3_\lam)\,\big|\,\dvl\vhl\bigr)_{L^2}\,dt'\\
=&\int_0^t\left(\nablah(-\D)^{-1}\dvl\bigl(\diveh
(v^3\pa_3\vh_\lam+\vh\pa_3 v^3_\lam)+2\pa_3(v^3\pa_3v^3_\lam)\bigr) \,\big|\,\dvl\vhl\right)_{L^2}\,dt' \\
&+\int_0^t\left(\nablah(-\D)^{-1}\diveh\dvl
\bigl(v^3\pa_3\baruh_\lam+\baruh\pa_3 v^3_\lam\bigr)\,\big|\,\dvl\vhl\right)_{L^2}\,dt'\\
=&\int_0^t\Bigl(\nablah(-\D)^{-1}\dvl\bigl(\diveh
(v^3\pa_3\vh_\lam-\vh\diveh\vhl)-2\pa_3(v^3\diveh\vhl)\bigr)\,\big|\,\dvl\vhl\Bigr)_{L^2}\,dt'\\
&+\int_0^t\Bigl(\nablah(-\D)^{-1}\diveh\dvl
\bigl(v^3\pa_3\baruh_\lam-\baruh\diveh\vhl\bigr)\,\big|\,\dvl\vhl\Bigr)_{L^2}\,dt'.
\end{align*}
Applying \eqref{fone1} with $A(D)=\nablah(-\D)^{-1}\pa_3,
a=v^3, b=\diveh v^\h_\la$ and $c=v^\h_\la$ yields
\beno
\int_0^t\bigl|\bigl(\nablah(-\D)^{-1}\pa_3\dvl(v^3\diveh\vhl)\,\big|\,\dvl\vhl\bigr)_{L^2}\bigr|\,dt'
\lesssim d_{\ell}^2 2^{-\ell}
\|\vhl\|_\twofB^{\f12}\|\nablah\vhl\|_\twoB^{\f32}.
\eeno
The remaining terms in ${\rm I}_{6,3}$ can be handled along the same line to that of ${\rm I}_{6,1}$ and ${\rm I}_{6,2}.$
As a consequence, we obtain
\begin{equation}\label{5.14}
\begin{split}
|{\rm I}_{6,3}|\lesssim d_{\ell}^2 2^{-\ell}
\Bigl(&\|\vh\|_{\infB}\|\nablah\vhl\|_\twoB^2\\
&+\|\vhl\|_\twofB^{\f12}\|\nablah\vhl\|_\twoB^{\f12}\bigl(\|\nablah\vhl\|_\twoB
+\|\pa_3\baruh_\lam\|_\twoB\bigr)\Bigr).
\end{split}
\end{equation}

To deal with ${\rm I}_{6,4}$,  it is crucial to observe that
$$\dvl\nablah\bigl((-\D)^{-1}-(-\D_\h)^{-1}\bigr)
\pa_i\pa_j(\baru^i \baru^j_\lam)
=\dvl\nablah\pa_3^2(-\D)^{-1}(-\D_\h)^{-1}
\pa_i\pa_j(\baru^i \baru^j_\lam).$$
Then due to the fact that $\sum_{i,j=1}^2\nablah\pa_3(-\D)^{-1}(-\D_\h)^{-1}
\pa_i\pa_j$ is a bounded Fourier multiplier, we  get,  by applying \eqref{fone1}
with $a=\baruh,b=\pa_3\baruh_\lam,c=\vhl,$ that
\begin{equation}\begin{split}\label{5.15}
|{\rm I}_{6,4}|&\leq 2\sum_{i=1}^2\sum_{j=1}^3\int_0^t\bigl|\bigl(\dvl\nablah
\pa_3(-\D)^{-1}(-\D_\h)^{-1}
\pa_i\pa_j(\baru^i\pa_3 \baru^j_\lam)\,\big|\,\dvl\vhl\bigr)_{L^2}\bigr|\,dt'\\
&\lesssim d_{\ell}^2 2^{-\ell}\|\vhl\|_\twofB^{\f12}
\|\nablah\vhl\|_\twoB^{\f12}\|\pa_3\baruh_\lam\|_\twoB.
\end{split}\end{equation}

By summing up (\ref{5.13}-\ref{5.15}), we arrive at
\begin{equation}\begin{split}\label{5.15a}
|{\rm I}_{6}|\lesssim d_{\ell}^2 2^{-\ell}
\Bigl(&\|\vh\|_{\infB}\|\nablah\vhl\|_\twoB^2\\
&+\|\vhl\|_\twofB^{\f12}\|\nablah\vhl\|_\twoB^{\f12}
\bigl(\|\nablah\vhl\|_\twoB+\|\pa_3\baruh_\lam\|_\twoB\bigr)\Bigr).
\end{split}\end{equation}

Now we are in a position to complete the proof of \eqref{apriorivh}.

\begin{proof}[Proof of \eqref{apriorivh}]
By inserting the estimates (\ref{5.3}-\ref{5.7}),
 \eqref{5.12} and \eqref{5.15a} into \eqref{5.2}, we achieve
\begin{equation*}\begin{split}
\f12\|\dvl\vhl(t)\|_{L^2}^2+\lam&\int_0^t f(t')\|\dvl\vhl(t')\|_{L^2}^2\,dt'
+\int_0^t\|\nablah\dvl\vhl(t')\|_{L^2}^2\,dt'\\
\leq &\f12\|\dvl\vh_0\|_{L^2}^2+C d_{\ell}^2 2^{-\ell}
\Bigl(\|\vh\|_{\infB}\|\nablah\vhl\|_\twoB^2\\
&\qquad+\|\vhl\|_\twofB^{\f12}\bigl(\|\nablah\vhl\|_\twoB^{\f32}
+\|\pa_3\baruh_\lam\|_\twoB^{\f32}\bigr)\Bigr).
\end{split}\end{equation*}
Multiplying the above inequality by $2^{\ell+1}
$  and taking square root
of the resulting inequality, and then summing up the inequalities over $\Z,$ we find
\begin{equation}\begin{split}\label{5.16}
\|\vhl\|_{\infB}+&\sqrt{2\lam}\|\vhl\|_{\twofB}
+\sqrt2\|\nablah\vhl\|_\twoB\\
\leq &\|\vh_0\|_{\cB^{0,\f12}}+C\|\vh\|_{\infB}^{\f12}\|\nablah\vhl\|_\twoB\\
&+C \|\vhl\|_\twofB^{\f14}\bigl(\|\nablah\vhl\|_\twoB^{\f34}
+\|\pa_3\baruh_\lam\|_\twoB^{\f34}\bigr).
\end{split}\end{equation}
It follows from Young's inequality  that
\begin{align*}
C \|\vhl\|_\twofB^{\f14}&\bigl(\|\nablah\vhl\|_\twoB^{\f34}
+\|\pa_3\baruh_\lam\|_\twoB^{\f34}\bigr)\\
\leq&\f1{10}\|\nablah\vhl\|_\twoB
+\|\pa_3\baruh_\lam\|_\twoB+C\|\vhl\|_\twofB.
\end{align*}
Inserting the above inequality into \eqref{5.16} and taking $\lam$ so that
 $\sqrt{2\lam}=C$, we obtain
\beno
\begin{split}
\|\vhl\|_{\infB}+
\f54\|\nablah\vhl\|_\twoB
\leq &\|\vh_0\|_{\cB^{0,\f12}}+\|\pa_3\baruh_\lam\|_\twoB\\
&+C\|\vh\|_{\infB}^{\f12}\|\nablah\vhl\|_\twoB, \end{split}
\eeno
which together with the following  consequence of
\eqref{5.1} that
$$\|a\|_{\wt L^p_t(\cB^{0,\f12})}\exp\Bigl(-\lam\int_0^tf(t')\,dt'\Bigr)
\leq\|a_\lam\|_{\wt L^p_t(\cB^{0,\f12})}\quad \mbox{for}
\quad p=2\ \mbox{or}\ \infty,$$
gives rise to \eqref{apriorivh}. \end{proof}

\medskip
\setcounter{equation}{0}
\section{The estimate of the vertical component $v^3$}\label{secaprioriv3}

The purpose of this section is to present the proof of \eqref{aprioriv3}. Compared with \cite{PZ1}, where
the third component of the velocity field can be estimated in the standard Besov spaces, here due to the
additional terms like $\baruh\cdot\na_\h v$ appears in \eqref{eqtv},
we will have to  use the weighted Chemin-Lerner norms once again.
Indeed  for any constant $\mu>0$, we denote
\begin{equation}\label{5.17}
w_{\mu}(t)\eqdef w(t)\bfg(t)
\quad\mbox{with}\quad \bfg(t)\eqdefa\exp\Bigl(-\mu\int_0^t\hbar(t')\,dt'\Bigr)\andf
\hbar(t)\eqdefa \|\baruh(t)\|_{\cB_4^{0,\f12}}^4,
\end{equation}
And similar notations for $v_\mu,~\baruh_\mu,$ and $~p_\mu$.

By multiplying $\bfg(t)$ to
\eqref{eqtw}, we write
$$\pa_t \wg+\mu \hbar(t)\wg-\Delta_\h \wg
+v\cdot\nabla v^3_\mu
+\baruh\cdot\nablah v^3_\mu
=-\pa_3 p_\mu.$$
By applying $\dvl$ to the above equation and taking $L^2$ inner product of the
resulting equation with $\dvl\wg$, and then integrating the equality over $[0,t],$  we obtain
\begin{equation}\label{5.18}
\f12\|\dvl\wg(t)\|_{L^2}^2+\mu\|\sqrt{\hbar}\dvl\wg\|_{L^2_t(L^2)}^2
+\|\nablah\dvl\wg\|_{L^2_t(L^2)}^2
=\f12\|\dvl u^3_{0,{\rm l}\h}\|_{L^2}^2
-\sum_{i=1}^6{\rm II}_i,
\end{equation}
where
\begin{align*}
&{\rm II}_1\eqdefa\int_0^t\bigl(\dvl(\baruh\cdot\nablah \wg)
\,\big|\,\dvl\wg\bigr)_{L^2}\,dt',\quad
{\rm II}_2\eqdefa\int_0^t\bigl(\dvl(\vh\cdot\nablah \wg)
\,\big|\,\dvl\wg\bigr)_{L^2}\,dt',\\
&{\rm II}_3\eqdefa\int_0^t\bigl(\dvl(v^\h_\mu\cdot\nablah v_F)
\,\big|\,\dvl\wg\bigr)_{L^2}\,dt',\quad
{\rm II}_4\eqdefa\int_0^t\bigl(\dvl(\baruh_\mu\cdot\nablah v_F)
\,\big|\,\dvl\wg\bigr)_{L^2}\,dt',\\
&{\rm II}_5\eqdefa\int_0^t\bigl(\dvl(v^3\pa_3 v^3_\mu)
\,\big|\,\dvl\wg\bigr)_{L^2}\,dt',\qquad\
{\rm II}_6\eqdefa\int_0^t\bigl(\dvl\pa_3 p_\mu
\,\big|\,\dvl\wg\bigr)_{L^2}\,dt'.
\end{align*}

Let us handle term by term above.

\noindent$\bullet$\underline{
The estimates  of ${\rm II}_1$ and ${\rm II}_2$}

We first get, by applying \eqref{fone1} with $a=\baruh,~b=\nablah\wg$
and $c=\wg,$ that
\begin{equation}\label{5.19}
|{\rm II}_1|\lesssim d_{\ell}^2 2^{-\ell}\|\wg\|_\twogB^{\f12}
\|\nablah\wg\|_\twoB^{\f32}.
\end{equation}
Whereas by applying a modified version of \eqref{fone3} with
$a=\vh,~b=\nablah w_\mu,~c=w_\mu$
and $\fg(t)=\exp\bigl(-\mu\int_0^t\hbar(t')\,dt'\bigr),$ we find
\begin{equation}\label{5.20}
|{\rm II}_2|
\lesssim d_\ell^2 2^{-\ell}\|\vh\|_{\infB}^{\f12}\|\nablah\vh\|_\twoB^{\f12}
\|\wg\|_{\infB}^{\f12}\|\nablah\wg\|_\twoB^{\f32}.
\end{equation}

\noindent$\bullet$\underline{
The estimate  of ${\rm II}_3$}

The estimate of ${\rm II}_3$ relies on the following lemma, the  proof of which
will be postponed in the Appendix \ref{appenda}.

\begin{lem}\label{lembhcb}
{\sl Let $a,~c\in\cB^{0,\f12}(T) $ and $b\in\Bh(T).$
Then for any smooth homogeneous Fourier multiplier, $A(D),$ of degree zero
and any $\ell\in\Z$, there hold
\begin{equation}\label{bhcb1}
\int_0^T\bigl|\bigl(A(D)\dvl(a\otimes b)\big|\dvl c\bigr)_{L^2}\bigr|
\,dt'
\lesssim d_{\ell}^2 2^{-\ell}\|a\|_{\wt L^4_T(\cB_4^{0,\f12})}
\|b\|_{\Bh(T)}\|c\|_\twoBT,
\end{equation}
and
\begin{equation}\label{bhcb1p}
\int_0^T\bigl|\bigl(A(D)\dvl(a\otimes b)\big|\dvl c\bigr)_{L^2}
\bigr|\,dt'
\lesssim d_{\ell}^2 2^{-\ell}\|a\|_\twoBT
\|b\|_{\Bh(T)}\|c\|_{\cB^{0,\f12}(T)}.
\end{equation}
}\end{lem}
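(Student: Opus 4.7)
The plan is to exploit the decomposition \eqref{decom} and split $b=b_{\rm lh}+b_{\rm hh}$, since the $\Bh$ norm treats these two pieces with different horizontal integrability. For each piece I then apply Bony's decomposition \eqref{bony} in the vertical variable to the product $a\otimes b$, writing
\[
\dvl A(D)(a\otimes b)=\dvl A(D)T^\v_a b+\dvl A(D)R^\v(a,b),
\]
and estimate each of the two quasi-orthogonal blocks separately. Throughout, the operator $A(D)$ is a degree-$0$ Fourier multiplier, hence bounded on every $L^p_\h(L^q_\v)$ with $p,q\in(1,\infty)$, and so can be ignored up to harmless constants.

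For the $b_{\rm hh}$ contribution I will use the 2-D horizontal H\"older chain $L^4_\h\cdot L^4_\h\cdot L^2_\h\hookrightarrow L^1_\h$, pairing $a$ and $b_{\rm hh}$ in $L^4_\h(L^2_\vv)$ and $c$ in $L^2_\h(L^2_\vv)$. The time integrability splits as $\frac14+\frac14+\frac12=1$, the $L^4_t$ bound on $b_{\rm hh}$ coming from Proposition \ref{lemB4} (which converts the $\Bh(T)$ norm into $\wt L^4_T(\cB_4^{0,\f12})$). For \eqref{bhcb1p}, the $L^4_t$ bound on $c$ instead comes from \eqref{S2eq5a} applied to $c\in\cB^{0,\f12}(T)$. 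For the vertical variable, Bony localisation in $2^\ell$-rings combined with Bernstein in $x_3$ applied to $S^\v_{\ell'-1}a$ or $S^\v_{\ell'+2}b$ lets me trade scales and obtain the factor $2^{-\ell}d_\ell^2$ after Cauchy-Schwarz and a $\sum_{|\ell-\ell'|\le 5}$ convolution against the summable sequences $(d_{\ell'})$.

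For the $b_{\rm lh}$ contribution the key observation is that $\D^\v_{\ell'}b_{\rm lh}=S^\h_{\ell'-1}\D^\v_{\ell'}b$ is horizontally localised at frequencies $\lesssim 2^{\ell'}$, so the anisotropic Bernstein inequality from Lemma \ref{lemBern} yields
\[
\|\D^\v_{\ell'}b_{\rm lh}\|_{L^\infty_\h(L^2_\vv)}\lesssim 2^{\ell'}\|S^\h_{\ell'-1}\D^\v_{\ell'}b\|_{L^2},
\]
and the $L^2$ norm on the right is controlled by $d_{\ell'}2^{-\ell'/2}\|b\|_{\Bh(T)}$ by Definition \ref{defBh}. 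This lets me pair $b_{\rm lh}$ with $a$ in $L^4_\h(L^2_\vv)$ and $c$ in $L^2$, or with $a\in L^2$ and $c\in L^4_\h(L^2_\vv)$ in the second inequality, so that the horizontal H\"older $L^\infty_\h\cdot L^4_\h\cdot L^4_\h\hookrightarrow L^1_\h$ closes with the right pairings. The extra $2^{\ell'}$ from Bernstein is absorbed by the gain $\nablah$ enjoyed by the partner factor (passing to the $\nablah b$ part of $\|b\|_{\Bh(T)}$ when necessary through the $L^2_t$ slot).

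The main technical nuisance will be bookkeeping the $\ell'$-summations so that the final factor is precisely $d_\ell^2 2^{-\ell}$ rather than something looser: in the $T^\v$ block only $|\ell'-\ell|\le 4$ indices survive and geometric summation is immediate, whereas in $R^\v$ one has $\ell'\ge \ell-N_0$ and the $2^{-\ell'/2}$ decay from $\cB^{0,\f12}$ (for $c$) or the $2^{-k}$ decay from the high-high part of $\Bh$ (for $b_{\rm hh}$) must be invoked to produce a convergent geometric sum whose sum is $\lesssim d_\ell$. With this done, raising to the square-root and summing in $\ell$ yields the stated inequalities \eqref{bhcb1} and \eqref{bhcb1p}.
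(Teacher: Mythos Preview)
Your overall plan---vertical Bony decomposition of $a\otimes b$, then H\"older in $(t,x_\h,x_3)$ with Proposition~\ref{lemB4} supplying the $\wt L^4_T(\cB_4^{0,\frac12})$ bound on $b$---is exactly what the paper does, and your treatment of the $b_{\rm hh}$ piece matches the paper's argument line for line. The difference is that the paper does \emph{not} split $b=b_{\rm lh}+b_{\rm hh}$: Proposition~\ref{lemB4} already bounds $\|b\|_{\wt L^4_T(\cB_4^{0,\frac12})}$ by $\|b\|_{\Bh(T)}$ for the full $b$, so the $L^4_\h\cdot L^4_\h\cdot L^2_\h$ H\"older chain you describe for $b_{\rm hh}$ works for all of $b$ without modification. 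The splitting is unnecessary.

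Your separate treatment of $b_{\rm lh}$, on the other hand, does not close as written. First, the H\"older claim $L^\infty_\h\cdot L^4_\h\cdot L^4_\h\hookrightarrow L^1_\h$ is arithmetically wrong ($\frac1\infty+\frac14+\frac14=\frac12$); you would need $L^\infty_\h\cdot L^2_\h\cdot L^2_\h$. Second, and more seriously, the Bernstein step $\|\D^\v_{\ell'}b_{\rm lh}\|_{L^\infty_\h(L^2_\v)}\lesssim 2^{\ell'}\|S^\h_{\ell'-1}\D^\v_{\ell'}b\|_{L^2}$ produces a factor $2^{\ell'}$ that cannot be ``absorbed by passing to $\nablah b$'': Bernstein gives $\|\nablah S^\h_{\ell'-1}f\|_{L^2}\lesssim 2^{\ell'}\|S^\h_{\ell'-1}f\|_{L^2}$, which is the wrong direction of inequality for what you need. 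Tracking the vertical weights in the $T^\v$ block, you end up with $d_\ell^2$ rather than $d_\ell^2 2^{-\ell}$, a loss of a full factor $2^{-\ell}$. The remedy is simply to drop the splitting and run your $b_{\rm hh}$ argument on all of $b$, which is precisely the paper's proof.
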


\begin{rmk}\label{S6rmk1}
Indeed the proof of Lemma \ref{lembhcb} shows that $\|b\|_{\Bh(T)}$ in \eqref{bhcb1} and \eqref{bhcb1p}
can be replaced by $\|b\|_{\cB^{0,\f12}(T)}.$
\end{rmk}

Let us admit this lemma temporarily, and  continue our estimate of ${\rm II}_3$.
By using integration by parts, we write
\begin{equation}\label{5.21}
{\rm II}_3=-\int_0^t\bigl(\dvl(\diveh v^\h_\mu\cdot v_F)
\big|\dvl\wg\bigr)_{L^2}\,dt'
-\int_0^t\bigl(\dvl(v^\h_\mu\otimes v_F)
\big|\dvl\nablah\wg\bigr)_{L^2}\,dt'.
\end{equation}

Applying \eqref{bhcb1p} with $a=\diveh v^\h_\mu,~b=v_F$
and $c=\wg$ yields
\begin{equation}\label{5.22}
\Bigl|\int_0^t\bigl(\dvl(\diveh\vhg\cdot v_F)
\big|\dvl\wg\bigr)_{L^2}\,dt'\Bigr|
\lesssim d_{\ell}^2 2^{-\ell}\|\nablah v^\h_\mu\|_\twoB
\|v_F\|_{\Bh(t)}\|\wg\|_{\cB^{0,\f12}(t)}.
\end{equation}
Whereas by applying  \eqref{bhcb1} with
$a=v^\h_\mu,~b=v_F$ and $c=\nablah\wg,$  we obtain
$$
\Bigl|\int_0^t\bigl(\dvl(\vhg\otimes v_F)
\big|\dvl\nablah\wg\bigr)_{L^2}\,dt'\Bigr|
\lesssim d_{\ell}^2 2^{-\ell}\|\vhg\|_{\wt L^4_t(\cB_4^{0,\f12})}
\|v_F\|_{\Bh(t)}\|\nablah\wg\|_\twoB.$$
Inserting the above two estimates
into \eqref{5.21} and using \eqref{S2eq5a}, we achieve
\begin{equation}\begin{split}\label{5.23}
|{\rm II}_3|\lesssim d_{\ell}^2 2^{-\ell}\|v_F\|_{\Bh(t)}
\|\wg\|_{\cB^{0,\f12}(t)}\|\vhg\|_{\cB^{0,\f12}(t)}.
\end{split}\end{equation}

\noindent$\bullet$\underline{
The estimate of ${\rm II}_4$}

Due to $\diveh\baruh=0,$  by using  integration by parts, we write
$${\rm II}_4=\int_0^t\bigl(\dvl\diveh(\baruh v_F)
\,\big|\,\dvl\wg\bigr)_{L^2}\bfg(t')\,dt'
=-\int_0^t\bigl(\dvl(\baruh v_F)
\,\big|\,\dvl\nablah\wg\bigr)_{L^2} \bfg(t')\,dt'.$$
By applying Bony's decomposition \eqref{bony}, we get
$${\rm II}_4
=-\int_0^t\bigl(\dvl(T^\v_{\baruh} v_F+R^\v(\baruh, v_F))
\,\big|\,\dvl\nablah\wg\bigr)_{L^2}\bfg(t')\,dt'.$$
We first observe that
\begin{align*}
\int_0^t\bigl|&\bigl(\dvl(R^\v(\baruh, v_F))
\,\big|\,\dvl\nablah\wg\bigr)_{L^2}\bigr|\bfg(t')\,dt'\\
\lesssim &\sum_{\ell'\geq\ell-N_0}\int_0^t\bfg(t')\|\D_{\ell'}^\v\baruh(t')\|_{L^4_\h(L^2_\v)}\|S^\v_{\ell+2}v_F(t')\|_{L^4_\h(L^\infty_\v)}
\|\D_\ell^\v\na_\h w_\mu(t')\|_{L^2}\,dt'\\
\lesssim &\sum_{\ell'\geq\ell-N_0}2^{-\f{\ell'}2}\int_0^t d_{\ell'}(t')\bfg(t')\|\baruh(t')\|_{\cB^{0,\f12}_4}\|v_F(t')\|_{\cB^{0,\f12}_4}
\|\D_\ell^\v\na_\h w_\mu(t')\|_{L^2}\,dt'\\
\lesssim &\sum_{\ell'\geq\ell-N_0}d_{\ell'} 2^{-\f{\ell'}2}\int_0^t\bfg(t') \|\baruh(t')\|_{\cB^{0,\f12}_4}\|v_F(t')\|_{\cB^{0,\f12}_4}
\|\D_\ell^\v\na_\h w_\mu(t')\|_{L^2}\,dt',
\end{align*}
applying H\"older's inequality  and Proposition \ref{lemB4} gives
\begin{align*}
\int_0^t\bigl|&\bigl(\dvl(R^\v(\baruh, v_F))
\,\big|\,\dvl\nablah\wg\bigr)_{L^2}\bigr|\bfg(t')\,dt'\\
\lesssim &\sum_{\ell'\geq\ell-N_0}d_{\ell'} 2^{-\f{\ell'}2}\Bigl(\int_0^t\bfg^4(t') \|\baruh(t')\|_{\cB^{0,\f12}_4}^4\,dt'\Bigr)^{\f14}\|v_F\|_{L^4_t(\cB^{0,\f12}_4)}
\|\D_\ell^\v\na_\h w_\mu\|_{L^2_t(L^2)}\\
\lesssim& \mu^{-\f14}d_{\ell}^2 2^{-\ell}\|v_F\|_{\cB^{-\f12,\f12}_4(t)}\|\na_\h w_\mu\|_{L^2_t(\cB^{0,\f12})}.
\end{align*}
Along the same line, we find
\begin{align*}
\int_0^t\bigl|&\bigl(\dvl(T^\v_{\baruh}v_F)
\,\big|\,\dvl\nablah\wg\bigr)_{L^2}\bigr|\bfg(t')\,dt'\\
\lesssim &\sum_{|\ell'-\ell|\leq 5}\int_0^t\bfg(t')\|S_{\ell'-1}^\v\baruh(t')\|_{L^4_\h(L^\infty_\v)}\|\D^\v_{\ell}v_F(t')\|_{L^4_\h(L^2_\v)}
\|\D_\ell^\v\na_\h w_\mu(t')\|_{L^2}\,dt'\\
\lesssim &\sum_{|\ell'-\ell|\leq 5}\int_0^t\bfg(t') \|\baruh(t')\|_{\cB^{0,\f12}_4}\|\D_{\ell'}^\v v_F(t')\|_{L^4_\h(L^2_\v)}
\|\D_\ell^\v\na_\h w_\mu(t')\|_{L^2}\,dt'\\
\lesssim &\sum_{|\ell'-\ell|\leq 5}\Bigl(\int_0^t\bfg^4(t') \|\baruh(t')\|_{\cB^{0,\f12}_4}^4\,dt'\Bigr)^{\f14}\|\D_{\ell'}^\v v_F\|_{L^4_t(L^4_\h(L^2_\v))}
\|\D_\ell^\v\na_\h w_\mu\|_{L^2_t(L^2)}\\
\lesssim& \mu^{-\f14}d_{\ell}^2 2^{-\ell}\|v_F\|_{\wt{L}^4_t(\cB^{0,\f12}_4)}\|\na_\h w_\mu\|_{L^2_t(\cB^{0,\f12})}.
\end{align*}

As a result, it comes out
\begin{equation}\label{5.24}
|{\rm II}_4|\lesssim \mu^{-\f14}d_{\ell}^2 2^{-\ell}
\|v_F\|_{\Bh(t)}\|\nablah \wg\|_\twoB.
\end{equation}

\noindent$\bullet$\underline{
The estimates  of ${\rm II}_5$}

Due to $\pa_3v^3=-\diveh v^\h$ and $v^3=w+v_F,$ we write
\begin{align*}
{\rm II}_5&=\int_0^t\bigl(\dvl(-v^3\diveh\vhg)
\,\big|\,\dvl\wg\bigr)_{L^2}\,dt'\\
&=-\int_0^t\bigl(\dvl(v_F\diveh\vhg+\wg\diveh v^\h)
\,\big|\,\dvl\wg\bigr)_{L^2}.
\end{align*}
Then applying  \eqref{bhcb1p} gives rise to
\begin{equation}\begin{split}\label{5.25}
|{\rm II}_5| &\lesssim d_\ell^2 2^{-\ell}\|\nablah\vh\|_\twoB\bigl(\|v_F\|_{\Bh(t)}+\|w_\mu\|_{\cB^{0,\f12}(t)}\bigr)
\|\wg\|_{\cB^{0,\f12}(t)}\\
&\lesssim d_{\ell}^2 2^{-\ell}\|\vh\|_{\cB^{0,\f12}(t)}
\bigl(\|v_F\|_{\Bh(t)}
+\|\wg\|_{\cB^{0,\f12}(t)}\bigr)\|\wg\|_{\cB^{0,\f12}(t)}
.
\end{split}\end{equation}

\noindent$\bullet$\underline{
The estimates  of ${\rm II}_6$}

The estimate of  ${\rm II}_6$ can be handled
similarly as ${\rm I}_6$. Indeed in view of \eqref{5.12a}, we write
\beno
\begin{split}
\pa_3p=\pa_3(-\D)^{-1}\diveh\bigl(&v^\h\cdot\na_\h v^\h+\baruh\cdot\na_\h v^\h+v^\h\cdot\na_\h\baruh+\baruh\cdot\na_\h\baruh\\
&+v^3\pa_3\baruh+v^3\pa_3v^\h\bigr)+
\pa_3^2(-\D)^{-1}\bigl(v\cdot\na v^3+\baruh\cdot\na_\h v^3\bigr).
\end{split}
\eeno
Accordingly,  we have the decomposition
${\rm II}_6=\sum_{i=1}^5{\rm II}_{6,i}$ with
\begin{align*}
{\rm II}_{6,1}=&\int_0^t\bigl(\dvl\pa_3(-\D)^{-1}\diveh
\bigl(v^\h\cdot\nabla_\h\vhg+\baruh\cdot\nablah\vhg
+\vh_\mu\cdot\nablah\baruh\bigr)\,\big|\,\dvl\wg\bigr)_{L^2}\,dt',\\
{\rm II}_{6,2}=&\int_0^t\bigl(\dvl\pa_3(-\D)^{-1}\diveh
(v^3\pa_3\vhg)
\,\big|\,\dvl\wg\bigr)_{L^2}\,dt',\\
{\rm II}_{6,3}=&\int_0^t\bigl(\dvl\pa_3(-\D)^{-1}\diveh
(v^3_\mu\pa_3\baruh)\,\big|\,\dvl\wg\bigr)_{L^2}\,dt',\\
{\rm II}_{6,4}=&\int_0^t\bigl(\dvl\pa_3^2(-\D)^{-1}
(v\cdot\nabla v^3_\mu+\baruh\cdot\nablah v^3_\mu
)\,\big|\,\dvl\wg\bigr)_{L^2}\,dt',\\
{\rm II}_{6,5}=&\sum_{i=1}^2\sum_{j=1}^2\int_0^t
\bigl(2\dvl(-\D)^{-1}\pa_i\pa_j(\baru^i \pa_3\baru^j_\mu)
\,\big|\,\dvl\wg\bigr)_{L^2}\,dt'.
\end{align*}

It is easy to observe from the estimate of ${\rm I}_{6,1}$ that
\begin{equation}\begin{split}\label{5.26g}
|{\rm II}_{6,1}|\lesssim d_{\ell}^2 2^{-\ell}
\Bigl(&\|\vh\|_{\infB}^{\f12}\|\nablah\vh\|_\twoB^{\f32}\|\wg\|_{\infB}^{\f12}\|\nablah\wg\|_\twoB^{\f12}\\
&\qquad\qquad+\|\wg\|_\twogB^{\f12}\|\nablah\wg\|_\twoB^{\f12}\|\nablah\vhg\|_\twoB\Bigr).
\end{split}\end{equation}

While by using $\pa_3v^3=-\diveh v^\h$ and integration by parts, we write
\beno
\begin{split}
{\rm II}_{6,2}=&\int_0^t\bigl(\dvl\pa_3(-\D)^{-1}\diveh
[\pa_3(v^3\vhg)-\vhg\pa_3v^3]
\,\big|\,\dvl\wg\bigr)_{L^2}\,dt'\\
=&-\int_0^t\bigl(\dvl(-\D)^{-1}\pa_3^2
(v^3\vhg)
\,\big|\,\dvl\nablah w_\mu\bigr)_{L^2}\,dt'\\
&+\int_0^t\bigl(\dvl\pa_3(-\D)^{-1}\diveh
(\vh_\mu\diveh\vh)
\,\big|\,\dvl\wg\bigr)_{L^2}\,dt'
\eqdef {\rm II}_{6,2}^a+{\rm II}_{6,2}^b.
\end{split}
\eeno
It follows from \eqref{bhcb1} and $v^3=v_F+w$ that
\begin{align*}
\bigl|{\rm II}_{6,2}^a\bigr|
&\lesssim d_{\ell}^2 2^{-\ell}\|\vh\|_{\cB^{0,\f12}(t)}
\bigl(\|v_F\|_{\Bh(t)}
+\|\wg\|_{\cB^{0,\f12}(t)}\bigr)\|\wg\|_{\cB^{0,\f12}(t)}.
\end{align*}
Whereas by using a modified version of \eqref{fone3}, we infer
\beno
\bigl|{\rm II}_{6,2}^b\bigr|\lesssim d_{\ell}^2 2^{-\ell}
\|\vh\|_{\infB}^{\f12}\|\nablah\vh\|_\twoB^{\f32}
\|\wg\|_{\infB}^{\f12}\|\nablah\wg\|_\twoB^{\f12}.
\eeno
Therefore, we obtain
\begin{equation}\begin{split}\label{5.26c}
|{\rm II}_{6,2}|\lesssim d_{\ell}^2 2^{-\ell}
\|\vh\|_{\cB^{0,\f12}(t)}
\bigl(\|v_F\|_{\Bh(t)}+\|\vh\|_{\cB^{0,\f12}(t)}
+\|\wg\|_{\cB^{0,\f12}(t)}\bigr)\|\wg\|_{\cB^{0,\f12}(t)}.
\end{split}\end{equation}

Whereas applying \eqref{bhcb1p} with $a=\pa_3\baruh,~b=v^3_\mu$
and $c=\wg$ leads to
\begin{equation}\label{5.26b}
|{\rm II}_{6,3}|\lesssim d_{\ell}^2 2^{-\ell}\|\pa_3\baruh\|_\twoB
\bigl(\|v_F\|_{\Bh(t)}+\|\wg\|_{\cB^{0,\f12}(t)}\bigr)\|\wg\|_{\cB^{0,\f12}(t)}.
\end{equation}

On the other hand, again due to $\dive v=0,$ we write
\begin{align*}
{\rm II}_{6,4}=\int_0^t\bigl(\dvl\pa_3^2(-\D)^{-1}
\bigl(&v^\h\cdot\nabla_\h w_\mu+v^\h_\mu\cdot\nabla_\h v_F+v^3\pa_3v^3_\mu\\
&\qquad\quad+\baruh\cdot\nablah w_\mu+\baruh\cdot\nablah v_F
\bigr)\,\big|\,\dvl\wg\bigr)_{L^2}\,dt'.
\end{align*}
Noticing that $(-\D)^{-1}\pa_3^2$ is a bounded Fourier operator,
we observe that
${\rm II}_{6,4}$ shares the same estimate as
$\sum_{i=1}^5{\rm II}_{i}$ given before, that is
\begin{equation}\begin{split}\label{5.27}
|{\rm II}_{6,4}|\lesssim d_{\ell}^2 2^{-\ell}
\Bigl(&\|\vh\|_{\cB^{0,\f12}(t)}
\|\wg\|_{\cB^{0,\f12}(t)}^2
+\|\wg\|_\twogB^{\f12}\|\wg\|_{\cB^{0,\f12}(t)}^{\f32}\\
&\qquad+\|v_F\|_{\Bh(t)}\bigl(\mu^{-\f14}+\|\vhg\|_{\cB^{0,\f12}(t)}\bigr)\|\wg\|_{\cB^{0,\f12}(t)}\Bigr).
\end{split}\end{equation}

Finally since $(-\D)^{-1}\pa_i\pa_j$
is a bounded Fourier operator, we get, by applying \eqref{fone1}
with $a=\baruh,~b=\pa_3\baruh_\mu,~c=\wg$, that
\begin{equation}\label{5.28}
|{\rm II}_{6,5}|\lesssim d_{\ell}^2 2^{-\ell}\|\wg\|_\twogB^{\f12}
\|\nablah\wg\|_\twoB^{\f12}\|\pa_3\baruh_\mu\|_\twoB.
\end{equation}

By summing (\ref{5.26g}-\ref{5.28}), we arrive at
\begin{equation}\label{5.28g}
\begin{split}
|{\rm II}_{6}|\lesssim & d_{\ell}^2 2^{-\ell}\Bigl(\bigl(\|\vh\|_{\cB^{0,\f12}(t)}
+\|\pa_3\baruh\|_\twoB+\|\wg\|_{\cB^{0,\f12}(t)}\bigr)\|\wg\|_\twogB^{\f12}
\|\wg\|_{\cB^{0,\f12}(t)}^{\f12}\\
&+\bigl(\mu^{-\f14}+\|\pa_3\baruh_\mu\|_\twoB+\|v^\h\|_{\cB^{0,\f12}(t)}\bigr)\|v_F\|_{\cB^{-\f12,\f12}_4}\|w_\mu\|_{\cB^{0,\f12}(t)}\\
&+\bigl(\|\vh\|_{\cB^{0,\f12}(t)}+\|\pa_3\baruh\|_\twoB\bigr)\|w_\mu\|_{\cB^{0,\f12}(t)}^2+\|v^\h\|_{\cB^{0,\f12}(t)}^2
\|w_\mu\|_{\cB^{0,\f12}(t)}\Bigr).
\end{split}
\end{equation}

Let us now complete the proof of \eqref{aprioriv3}.

\begin{proof}[Proof of \eqref{aprioriv3}]
By inserting the estimates \eqref{5.19},~\eqref{5.20},
(\ref{5.23}-\ref{5.25}) and \eqref{5.28g} into \eqref{5.18},
and then
multiplying $2^{\ell+1}$ to the resulting inequality, and finally
taking square root and then summing up  the resulting inequalities over $\Z,$ we obtain
\begin{equation*}\begin{split}
\|\wg&\|_{\cB^{0,\f12}(t)}+\sqrt{2\mu}\|\wg\|_{\twogB}\\
\leq &\|u^3_{0,{\rm l}\h}\|_{\cB^{0,\f12}}+C\bigl(\|\vh\|_{\cB^{0,\f12}(t)}^{\f12}
+\|\pa_3\baruh_\mu\|_\twoB^{\f12}\bigr)\|w_\mu\|_{\cB^{0,\f12}(t)}\\
&+C\Bigl(\|v^\h\|_{\cB^{0,\f12}(t)}+\bigl(\mu^{-\f18}+\|\vh\|_{\cB^{0,\f12}(t)}^{\f12}+\|\pa_3\baruh\|_\twoB^{\f12}\bigr)
\|v_F\|_{\cB^{-\f12,\f12}_4}^{\f12}\Bigr)\|w_\mu\|_{\cB^{0,\f12}(t)}^{\f12}\\
&+C\bigl(\|\vh\|_{\cB^{0,\f12}(t)}^{\f12}
+\|\pa_3\baruh\|_\twoB^{\f12}+\|w_\mu\|_{\cB^{0,\f12}(t)}^{\f12}\bigr)\|\wg\|_\twogB^{\f14}
\|\nablah\wg\|_{\cB^{0,\f12}(t)}^{\f14}.
\end{split}\end{equation*}
Applying Young's inequality gives
\begin{align*}
C\Bigl(&\|v^\h\|_{\cB^{0,\f12}(t)}+\bigl(\mu^{-\f18}+\|\vh\|_{\cB^{0,\f12}(t)}^{\f12}+\|\pa_3\baruh\|_\twoB^{\f12}\bigr)
\|v_F\|_{\cB^{-\f12,\f12}_4}^{\f12}\Bigr)\|w_\mu\|_{\cB^{0,\f12}(t)}^{\f12}\\
\leq &\f1{12}\|w_\mu\|_{\cB^{0,\f12}(t)}+C\bigl(\mu^{-\f14}+\|\vh\|_{\cB^{0,\f12}(t)}+\|\pa_3\baruh\|_\twoB\bigr)\|v_F\|_{\cB^{-\f12,\f12}_4(t)}
+C\|v^\h\|_{\cB^{0,\f12}(t)}^2,
\end{align*}
and
\begin{align*}
C\bigl(\|\vh\|_{\cB^{0,\f12}(t)}^{\f12}&+\|\pa_3\baruh\|_\twoB^{\f12}+\|w_\mu\|_{\cB^{0,\f12}(t)}^{\f12}\bigr)\|\wg\|_\twogB^{\f14}
\|\wg\|_{\cB^{0,\f12}(t)}^{\f14}\\
\leq&\f1{12}\|w_\mu\|_{\cB^{0,\f12}(t)}+C\|\wg\|_\twogB+C\bigl(\|\vh\|_{\cB^{0,\f12}(t)}+\|\pa_3\baruh\|_\twoB\bigr).
\end{align*}
As a result, it comes out
\begin{align*}
\|\wg&\|_{\cB^{0,\f12}(t)}+\sqrt{2\mu}\|\wg\|_{\twogB}
\leq \|u^3_{0,{\rm l}\h}\|_{\cB^{0,\f12}}+C\|\wg\|_\twogB\\
&+\Bigl(\f16+C\bigl(\|\vh\|_{\cB^{0,\f12}(t)}^{\f12}
+\|\pa_3\baruh\|_\twoB^{\f12}\bigr)\Bigr)\|\wg\|_{\cB^{0,\f12}(t)}+C\Bigl(\|\vh\|_{\cB^{0,\f12}(t)}
\\
+&\|\pa_3\baruh\|_\twoB+\|\vh\|_{\cB^{0,\f12}(t)}^2+\bigl(\mu^{-\f14}+
\|\vh\|_{\cB^{0,\f12}(t)}+\|\pa_3\baruh\|_\twoB\bigr)\|v_F\|_{\cB^{-\f12,\f12}_4(t)}\Bigr).
\end{align*}
Taking $\mu$ in the above inequality so that $\sqrt{2\mu}=C$ gives rise to
\begin{equation}\begin{split}\label{5.29}
\Bigl(\f56&-C\bigl(\|\vh\|_{\cB^{0,\f12}(t)}^{\f12}
+\|\pa_3\baruh\|_\twoB^{\f12}\bigr)\Bigr)\|\wg\|_{\cB^{0,\f12}(t)}
\\
\leq &\|u^3_{0,{\rm l}\h}\|_{\cB^{0,\f12}}+C\Bigl(\|\vh\|_{\cB^{0,\f12}(t)}+\|\pa_3\baruh\|_\twoB
+\|\vh\|_{\cB^{0,\f12}(t)}^2\\
&\qquad\qquad\qquad+\bigl(1+
\|\vh\|_{\cB^{0,\f12}(t)}+\|\pa_3\baruh\|_\twoB\bigr)\|v_F\|_{\cB^{-\f12,\f12}_4(t)}\Bigr).
\end{split}\end{equation}
On the other hand,
in view of the definition of $u^3_{0,{\rm l}\h}$, there holds for any
$\ell\in\Z$ that
$$\|\dvl u^3_{0,{\rm l}\h}\|_{L^2}
\lesssim\sum_{|j-\ell|\leq1}\|S^\h_{j-1}\dvj u^3_0\|_{L^2}
\lesssim d_\ell 2^{-\f\ell2}\|u^3_0\|_{\Bh},$$
which indicates
$$\|u^3_{0,{\rm l}\h}\|_{\cB^{0,\f12}}\lesssim\|u^3_0\|_{\Bh}.$$
Inserting the above estimate into \eqref{5.29} and repeating the argument from \eqref{4.16} to \eqref{4.17},
we conclude the proof of  \eqref{aprioriv3}. \end{proof}

\medskip

\appendix

\setcounter{equation}{0}
\section{The proof of Lemmas \ref{lemfone} and \ref{lembhcb}}\label{appenda}

In this section, we present the proof of Lemmas \ref{lemfone} and \ref{lembhcb}.

\begin{proof}[Proof of Lemma \ref{lemfone}]
By applying Bony's decomposition in the  vertical variable \eqref{bony}
to $a\otimes b$, we write
\begin{equation}\begin{split}\label{fone4}
&\qquad\qquad\quad\int_0^T\bigl(\dvl A(D)(a\otimes b)
\big|\dvl c\bigr)_{L^2}\,dt=Q_1+Q_2\quad\mbox{with}\\
&Q_1\eqdef\int_0^T\bigl(\dvl A(D)(T^\v_{a} b)
\big|\dvl c\bigr)_{L^2}\,dt
=\int_0^T\bigl(\dvl(T^\v_{a} b)
\big| A(D)\dvl  c\bigr)_{L^2}\,dt\quad \mbox{and}\\
&Q_2\eqdef\int_0^T\bigl(\dvl A(D) R^\v(a, b)
\big|\dvl c\bigr)_{L^2}\,dt
=\int_0^T\bigl(\dvl R^\v(a, b)
\big| A(D)\dvl c\bigr)_{L^2}\,dt.
\end{split}\end{equation}
Considering the support properties to the Fourier transform of the terms in $T^\v_{a} b$, and noting that $A(D)$ is a smooth
homogeneous Fourier multiplier of degree zero, we find
\begin{equation*}\begin{split}\label{fone5}
|Q_1|
&\leq\int_0^T\|\dvl(T^\v_{a} b)\|_{L_\h^{\f43}(L_\v^2)}
\| A(D) \dvl c\|_{L_\h^4(L_\v^2)}\,dt\\
&\lesssim \sum_{|\ell'-\ell|\leq 5} \int_0^T\|S^\v_{\ell'-1}a\|_{L_\h^4(L_\v^\infty)}
\|\dvlp b\|_{L^2}\|A(D) \dvl  c\|_{L^2}^{\f12}
\|\nablah A(D)\dvl  c\|_{L^2}^{\f12}\,dt\\
&\lesssim\sum_{|\ell'-\ell|\leq 5}
\Bigl(\int_0^T\|S^\v_{\ell'-1}a(t)\|_{L_\h^4(L_\v^\infty)}^4
\|\dvl c(t)\|_{L^2}^2\,dt\Bigr)^{\f14}\|\dvlp b\|_{L^2_T(L^2)}\|\nablah\dvl c\|_{L^2_T(L^2)}^{\f12}.
\end{split}\end{equation*}
It follows from Lemma \ref{lemBern} and Definition \ref{Def2.4} that
\begin{equation*}\label{fone6}
\|S^\v_{\ell'-1}a(t)\|_{L_\h^4(L_\v^\infty)}
\leq\sum_{j\leq\ell'-2}\|\D^\v_{j}a(t)\|_{L_\h^4(L_\v^\infty)}
\lesssim\sum_{j\leq\ell'-2}2^{\f{j}2}
\|\D^\v_{j}a(t)\|_{L_\h^4(L_\v^2)}\lesssim \|a(t)\|_{\cB_4^{0,\f12}}.
\end{equation*}
This together with Definition \ref{defpz} ensures that
\begin{equation}\label{fone7}
|Q_1|
\lesssim d_{\ell}^2 2^{-\ell}\|c\|_{\wt{L}^2_{T,\ff}(\cB^{0,\f12})}^{\f12} \|b\|_\twoBT
\|\nablah c\|_\twoBT^{\f12}.
\end{equation}
Along the same line, we get, by applying \eqref{S2eq5}, that
\begin{equation}\begin{split}\label{fone8}
|Q_{1,\fg}|&\eqdef\int_0^T\bigl|\bigl(\dvl(T^\v_{a} b)
\big| A(D)\dvl  c\bigr)_{L^2}\bigr|\fg^2\,dt\\
&\lesssim\sum_{|\ell'-\ell|\leq 5}
\|\sqrt{\fg} S^\v_{\ell'-1}a\|_{L_T^4(L_\h^4(L_\v^\infty))}\|\fg\dvlp b\|_{L^2_T(L^2)}
\|\dvl c\|_{L^\infty_T(L^2)}^{\f12}\|\fg\nablah\dvl c\|_{L^2_T(L^2)}^{\f12}
\\
&\lesssim d_{\ell}^2 2^{-\ell}\|a\|_\infBT^{\f12}\|\fg\nablah a\|_\twoBT^{\f12}
\|\fg b\|_\twoBT\|c\|_\infB^{\f12}\|\fg\nablah c\|_\twoBT^{\f12}.
\end{split}\end{equation}

On the other hand, once again
considering the support properties to the Fourier transform of the terms in $R^\v({a}, b),$ we find
\begin{equation*}\begin{split}\label{fone9}
|Q_2|
&\leq\int_0^T\|\dvl R^\v(a,b)\|_{L_\h^{\f43}(L_\v^2)}
\| A(D)\dvl c\|_{L_\h^4(L_\v^2)}\,dt\\
&\lesssim\sum_{\ell'\geq\ell-N_0}\int_0^T\|\D^\v_{\ell'}a\|_{L_\h^4(L_\v^2)}
\|S^\v_{\ell'+2}b\|_{L^2_\h(L^\infty_\v)}
\| A(D)\dvl c\|_{L^2}^{\f12}
\|\nablah A(D)\dvl c\|_{L^2}^{\f12}\,dt\\
&\lesssim\sum_{\ell'\geq\ell-N_0} 2^{-\f{\ell'}2}\int_0^Td_{\ell'}(t)\|a(t)\|_{\cB^{0,\f12}_4}\|b(t)\|_{L^2_\h(L^\infty_\v)}
\|\dvl c(t)\|_{L^2}^{\f12}\|\nablah\dvl c(t)\|_{L^2}^{\f12}\,dt\\
&\lesssim\sum_{\ell'\geq\ell-N_0} d_{\ell'}2^{-\f{\ell'}2}\int_0^T\|a(t)\|_{\cB^{0,\f12}_4}\|b(t)\|_{L^2_\h(L^\infty_\v)}
\|\dvl c(t)\|_{L^2}^{\f12}\|\nablah\dvl c(t)\|_{L^2}^{\f12}\,dt.
\end{split}\end{equation*}
Yet it follows from Lemma \ref{lemBern} that
$$\|b\|_{L^2_T(L^2_\h(L^\infty_\v))}
\lesssim\sum_{\ell\in\Z}2^{\f{\ell}2}
\|\D^\v_{\ell}b\|_{L_T^2(L^2)}\leq\|b\|_\twoBT.$$
As a result, by virtue of Definition \ref{defpz}, we obtain
\begin{equation}\begin{split}\label{fone10}
|Q_2|
&\lesssim\sum_{\ell'\geq\ell-N_0}d_{\ell'} 2^{-\f{\ell'}2}
\Bigl(\int_0^T\|a(t)\|_{\cB_4^{0,\f12}}^4
\|\dvl c(t)\|_{L^2}^{2}\,dt\Bigr)^{\f14}
\|\nablah\dvl c\|_{L_T^2(L^2)}^{\f12}\|b\|_\twoBT\\
&\lesssim\sum_{\ell'\geq\ell-N_0}d_{\ell'} 2^{-\f{\ell'}2}
\Bigl(d_{\ell} 2^{-\f{\ell}2}\|c\|_\twofBT\Bigr)^{\f12}
\bigl(d_{\ell} 2^{-\f{\ell}2}\|\nablah c\|_\twoBT\bigr)^{\f12}
\|b\|_\twoBT\\
&\lesssim d_{\ell}^2 2^{-\ell}
\|c\|_\twofBT^{\f12}\|\nablah c\|_\twoBT^{\f12} \|b\|_\twoBT.
\end{split}\end{equation}
Similarly,  thanks to \eqref{S2eq5}, one has
\begin{equation}\begin{split}\label{fone11}
|&Q_{2,\fg}|\eqdef\int_0^T\bigl|\bigl(\dvl R^\v(a, b)
\big|  A(D)\dvl c\bigr)_{L^2}\bigr|\fg^2\,dt\\
&\lesssim\sum_{\ell'\geq\ell-N_0}\|\sqrt{\fg}\D^\v_{\ell'}a\|_{L^4_T(L_\h^4(L_\v^2))}
\|\fg S^\v_{\ell'+2}b\|_{L^2_T(L^2_\h(L^\infty_\v))}
\bigl(\|\dvl c\|_{L^2_T(L^2)}\|\fg\nablah\dvl c\|_{L^2_T(L^2)}\bigr)^{\f12}\\
&\lesssim d_{\ell}^2 2^{-\ell}\|a\|_\infBT^{\f12}\|\fg\nablah a\|_\twoBT^{\f12}
\|\fg b\|_\twoBT\|c\|_\infBT^{\f12}\|\fg\nablah c\|_\twoBT^{\f12}.
\end{split}\end{equation}
Combining \eqref{fone7} with \eqref{fone10} gives \eqref{fone1}.
And \eqref{fone3}
follows from \eqref{fone8} and \eqref{fone11}.

It remains to prove \eqref{fone2}. Indeed similar to the proof of \eqref{fone7}, we  write
\begin{align*}
|Q_1|
&\lesssim \sum_{|\ell'-\ell|\leq 5} \int_0^T
\|S^\v_{\ell'-1}a\|_{L_\h^4(L_\v^\infty)}
\|\dvlp b\|_{L_\h^4(L_\v^2)}\| A(D)\dvl c\|_{L^2}\,dt\\
&\lesssim \sum_{|\ell'-\ell|\leq 5} \int_0^T
\|a(t)\|_{\cB^{0,\f12}_4}
\|\dvlp b(t)\|_{L^2}^{\f12}\|\dvlp\nablah b(t)\|_{L^2}^{\f12}\|\dvl c(t)\|_{L^2}\,dt\\
&\lesssim \sum_{|\ell'-\ell|\leq 5} \Bigl(\int_0^T
\|a(t)\|_{\cB^{0,\f12}_4}^4
\|\dvlp b(t)\|_{L^2}^2\,dt\Bigr)^{\f14}\|\dvlp\nablah b\|_{L^2_T(L^2)}^{\f12}\|\dvl c\|_{L^2_T(L^2)},
\end{align*}
from which and Definition \ref{defpz}, we infer
\begin{equation}\begin{split}\label{fone12}
|Q_1|
&\lesssim d_\ell 2^{-\f{\ell}2} \sum_{|\ell'-\ell|\leq 5}
d_{\ell'} 2^{-\f{\ell'}2}\|b\|_\twofBT^{\f12}
\|\nablah b\|_\twoBT^{\f12}\|c\|_\twoBT\\
&\lesssim d_{\ell}^2 2^{-\ell}
\|b\|_\twofBT^{\f12}\|\nablah b\|_\twoBT^{\f12}\|c\|_\twoBT.
\end{split}\end{equation}
While we deduce from Definition \ref{Def2.4} that
\begin{align*}
|Q_2|
&\lesssim\sum_{\ell'\geq\ell-N_0}\int_0^T\|\D^\v_{\ell'}a\|_{L_\h^4(L_\v^2)}
\|S^\v_{\ell'+2}b\|_{L^4_\h(L^\infty_\v)}
\| A(D)\dvl c\|_{L^2}\,dt\\
&\lesssim\sum_{\ell'\geq\ell-N_0}d_{\ell'}2^{-\f{\ell'}2}\int_0^T\|a(t)\|_{\cB^{0,\f12}_4}
\|b(t)\|_{L^4_\h(L^\infty_\v)}
\|\dvl c(t)\|_{L^2}\,dt\\
&\lesssim d_\ell 2^{-\f{\ell}2}\|c\|_\twoBT
\sum_{\ell'\geq\ell-4} d_{\ell'} 2^{-\f{\ell'}2}
\Bigl(\int_0^T \|a(t)\|_{\cB_4^{0,\f12}}^2
\|b(t)\|_{L^4_\h(L^\infty_\v)}^{2}\,dt\Bigr)^{\f12}.
\end{align*}
Whereas we get, by applying triangle inequality and Lemma \ref{lemBern}, that
\begin{align*}
\Bigl(\int_0^T&\|a(t)\|_{\cB_4^{0,\f12}}^2
\|b(t)\|_{L^4_\h(L^\infty_\v)}^{2}\,dt\Bigr)^{\f12}\\
\lesssim &\sum_{\ell\in\Z}2^{\f{\ell}2}\Bigl(\int_0^T \|a(t)\|_{\cB_4^{0,\f12}}^2
\|\D_\ell b(t)\|_{L^2}\|\na_\h\D_\ell b(t)\|_{L^2}\,dt\Bigr)^{\f12}\\
\lesssim &\sum_{\ell\in\Z}2^{\f{\ell}2}\Bigl(\int_0^T \|a(t)\|_{\cB_4^{0,\f12}}^4
\|\D_\ell b(t)\|_{L^2}^2\,dt\Bigr)^{\f14}\|\na_\h\D_\ell b\|_{L^2_T(L^2)}^{\f12}\\
\lesssim &\|b\|_\twofBT^{\f12} \|\nablah b\|_\twoBT^{\f12}.
\end{align*}
This in turn shows that
\begin{equation*}\begin{split}\label{fone13}
|Q_2|\lesssim  d_{\ell}^2 2^{-\ell}\| c\|_\twoBT
\|b\|_\twofBT^{\f12}\|\nablah b\|_\twoBT^{\f12},
\end{split}\end{equation*}
which together with \eqref{fone12}  ensures
 \eqref{fone2}. This completes the proof of Lemma \ref{lemfone}.
\end{proof}

\begin{proof}[Proof of Lemma \ref{lembhcb}] Let $Q_1$ be given by \eqref{fone4}. We first get, by a similar derivation of \eqref{fone7}, that
\begin{equation*}\begin{split}\label{bhcb2}
|Q_1|
&\lesssim\sum_{|\ell'-\ell|\leq 5}\|S^\v_{\ell'-1}a\|_{L^4_T(L_\h^4(L_\v^\infty))}
\|\dvlp b\|_{L^4_T(L^4_\h(L^2_\v))}
\|A(D)\dvl c\|_{L^2_t(L^2)}\\
&\lesssim d_\ell 2^{-\f{\ell}2}\sum_{|\ell'-\ell|\leq 5}d_{\ell'}2^{-\f{\ell'}2}\|a\|_{\wt L^4_T(\cB_4^{0,\f12})}
\|b\|_{\wt L^4_T(\cB_4^{0,\f12})}
\|c\|_{\wt L^2_T(\cB^{0,\f12})},
\end{split}\end{equation*} which together with Proposition \ref{lemB4} implies that
\begin{equation}\label{bhcb7}
|Q_1|\lesssim d_{\ell}^2 2^{-\ell}\|a\|_{\wt L^4_T(\cB_4^{0,\f12})}
\|b\|_{\Bh(T)}\|c\|_\twoBT.
\end{equation}

While for $Q_2$ given by \eqref{fone4},  we  get, by a similar derivation of \eqref{fone10}, that
\begin{equation*}\begin{split}\label{bhcb8}
|Q_2|
&\lesssim\sum_{\ell'\geq\ell-N_0}\|\dvlp a\|_{L^4_T(L_\h^4(L_\v^2))}
\|S^\v_{\ell'+2}b\|_{L^4_T(L^4_\h(L^\infty_\v))}
\| A(D)\dvl c\|_{L^2_T(L^2)}\\
&\lesssim d_\ell 2^{-\f{\ell}2}\sum_{\ell'\geq\ell-N_0}
d_{\ell'} 2^{-\f{\ell'}2}\|a\|_{\wt L^4_T(\cB_4^{0,\f12})}
\|b\|_{\wt L^4_T(\cB_4^{0,\f12})}
\|c\|_{\wt L^2_T(\cB^{0,\f12})},
\end{split}\end{equation*}
from which and Proposition \ref{lemB4}, we infer
$$
|Q_2|
\lesssim d_{\ell}^2 2^{-\ell}\|a\|_{\wt L^4_T(\cB_4^{0,\f12})}
\|b\|_{\Bh(T)}\|c\|_\twoBT.$$
This together  with \eqref{fone4} and  \eqref{bhcb7}
ensures \eqref{bhcb1}.

The inequality \eqref{bhcb1p} can be proved similarly. As a matter of fact, we observe that
\begin{align*}
|Q_1|
&\lesssim\sum_{|\ell'-\ell|\leq 5}\|S^\v_{\ell'-1}a\|_{L^2_T(L_\h^2(L_\v^\infty))}
\|\dvlp b\|_{L^4_T(L^4_\h(L^2_\v))}
\|A(D)\dvl c\|_{L^4_T(L^4_\h(L^2_\v))}\\
&\lesssim\sum_{|\ell'-\ell|\leq 5}\|S^\v_{\ell'-1}a\|_{L^2_T(L_\h^2(L_\v^\infty))}
\|\dvlp b\|_{L^4_T(L^4_\h(L^2_\v))}
\|\dvl c\|_{L^\infty_T(L^2)}^{\f12}
\|\dvl \nablah c\|_{L^2_T(L^2)}^{\f12}\\
&\lesssim d_\ell 2^{-\f{\ell}2}\sum_{|\ell'-\ell|\leq 5}d_{\ell'}2^{-\f{\ell'}2}\|a\|_{\wt L^2_T(\cB^{0,\f12})}
\|b\|_{\wt{L}^4_T(\cB_4^{0,\f12})}
\|c\|_{\cB^{0,\f12}(T)},
\end{align*}
and
\begin{align*}
|Q_2|
&\lesssim\sum_{\ell'\geq\ell-N_0}\|\dvlp a\|_{L^2_T(L^2)}
\|S^\v_{\ell'+2}b\|_{L^4_T(L^4_\h(L^\infty_\v))}
\|A(D) \dvl c\|_{L^4_T(L_\h^4(L_\v^2))}\\
&\lesssim d_\ell 2^{-\f{\ell}2}\sum_{\ell'\geq\ell-N_0}
d_{\ell'} 2^{-\f{\ell'}2}\|a\|_{\wt L^2_T(\cB^{0,\f12})}
\|b\|_{\wt{L}^4_T(\cB_4^{0,\f12})}
\|c\|_{\cB^{0,\f12}(T)}.
\end{align*}
Then  \eqref{bhcb1p} follows from Proposition \ref{lemB4}.
This completes the proof of this lemma.
\end{proof}

\bigbreak \noindent {\bf Acknowledgments.} M. Paicu was partially supported by the Agence Nationale de la
Recherche, Project IFSMACS, Grant ANR-15-CE40-0010.
P. Zhang is partially supported
by NSF of China under Grants   11371347 and 11688101, Morningside Center of Mathematics of The Chinese Academy of Sciences and innovation grant from National Center for
Mathematics and Interdisciplinary Sciences.

\end{document}